\documentclass[11pt]{article}
\usepackage{amsmath,amssymb,amsthm,comment}
\usepackage{mathtools}
\usepackage{color}
\usepackage[nohead,margin=1.1in]{geometry}
\usepackage[ruled,vlined]{algorithm2e}

\usepackage{booktabs}
\usepackage{threeparttable}
\usepackage{graphicx,psfrag,epstopdf}
\usepackage{url}
\graphicspath{{./pics/}}
\graphicspath{{./fig/}}
\usepackage{tcolorbox}
\allowdisplaybreaks
\theoremstyle{plain}
\newtheorem{theorem}{Theorem}[section]
\newtheorem{remark}{Remark}[section]
\newtheorem{definition}{Definition}[section]
\newtheorem{lemma}{Lemma}[section]
\newtheorem{assumption}{Assumption}[section]
\newtheorem{prop}{Proposition}[section]

\newtheorem{corollary}{Corollary}[section]
\numberwithin{equation}{section}
\def\E{\mathbb{E}}

\newcommand{\Prob}{\mathbb{P}}

\begin{document}
\title{On the convergence of doubly stochastic block gradient descent method for linear inverse problems
\thanks{Y. Xia is supported by the National Natural Science Foundation of China (No. 12401556).}}

\author{Yuxin Xia\thanks{School of Mathematics, Harbin Institute of Technology, Harbin 150001, Heilongjiang Province, China (\texttt{yuxinxia@hit.edu.cn}) } 
\and Zehui Zhou\thanks{Department of Mathematics, Hong Kong Baptist University,
Kowloon Tong, Hong Kong, China (\texttt{zehuizhou@hkbu.edu.hk}). Author to whom any correspondence should be addressed.}}

\date{}
\maketitle

\begin{abstract}
Stochastic gradient descent (SGD) and randomized block coordinate descent (RBCD) belong to two fundamentally different classes of stochastic iterative regularization methods, and are promising and popular for solving large-scale inverse problems, due to their excellent scalability with respect to data size and signal size, respectively. 
In this work, we propose a doubly stochastic block gradient descent (DSBGD) method with early stopping, by integrating the updating strategies of both SGD and RBCD, for solving linear inverse problems in infinite-dimensional Hilbert spaces.
At each iteration, the method randomly selects one data pair to obtain a stochastic gradient estimate, and then uses it to update one randomly selected block coordinate.
We prove the regularizing property of DSBGD under \textit{a priori} stopping rules and then derive its convergence rates with the additional source condition and a condition on the forward operator.
To the best of our knowledge, these are the first convergence rate results of randomized coordinate descent-type iterative methods for inverse problems from the perspective of regularization theory.
We also prove that the DSBGD iteration can be stopped by the classical discrepancy principle in finitely many steps with high probability, and that the mean squared error decreases monotonically before stopping. 
Several numerical experiments are provided to complement the theoretical analysis.

\smallskip
\noindent{\bf Keywords}:
stochastic gradient descent, randomized block coordinate descent, inverse problem, regularizing property, convergence rate

\end{abstract}

\section{Introduction}

In this work, we develop and analyze stochastic iterative regularization methods for solving linear ill-posed inverse problems in Hilbert spaces:
\begin{equation}\label{eqn:lininv}
A x=y^\dag,
\end{equation}
where the operator $A: X=X_1 \times \cdots \times  X_{n_x} \rightarrow Y =Y_1\times \cdots \times Y_{n_y}$, representing the data formation mechanism, is given by $$A x := (A_1 x,\cdots,A_{n_y} x)^\top, \quad \forall x\in X,$$ with bounded linear operators $A_i: X \rightarrow Y_i$ and the superscript $\top $ denoting the vector transpose. 
Each operator $A_i$ is of the form $$A_{i}x=\sum_{j=1}^{n_x} A_{i,j}x_j$$ 
with bounded linear operators $A_{i,j}$ between Hilbert spaces $X_j$ and $Y_i$ equipped with norms $\|\cdot\|_{X_i}$ and $\|\cdot\|_{Y_i}$, respectively. 
We also define the bounded linear operator $A_{\cdot,j}: X_j \rightarrow Y$ such that $A_{\cdot,j}x_j:=(A_{1,j}x_j,\cdots,A_{n_y,j}x_j)^\top$.
The element $y^\dag=(y^\dag_1,\cdots,y^\dag_{n_y})^\top = A x^\dag \in Y$ denotes the exact data with the norm $\|y^\dag\|_Y:=\sqrt{\sum_{i=1}^{{n_y}}\|y^\dag_i\|_{Y_i}^2}$ and $x^\dag=(x^\dag_1,\cdots,x^\dag_{n_x})^\top$ being the minimum-norm solution relative to the initial guess $x_1$ (cf.\;\eqref{eqn:min-norm}),
while $x=(x_1,\cdots,x_{n_x})^\top\in X$ denotes the unknown signal of interest with the norm $\|x\|_X:=\sqrt{\sum_{j=1}^{n_x}\|x_j\|^2_{X_j}}$. 
In practice, we only have access to a noisy version of $y^\dag$, given by
$$y^\delta =(y^\delta_1,\cdots,y^\delta_{n_y})^\top =y^\dag +\xi,$$
where $\xi = (\xi_1,\cdots,\xi_{n_y})^\top\in Y$ denotes the noise in the data with a noise level $\delta =\|\xi\|_Y$.
Linear inverse problems of the form \eqref{eqn:lininv} arise in a wide range of practical applications, particularly in medical imaging including computed tomography (CT) \cite{HermanLentLutz:1978,GaoBlumensath:2018} and photoacoustic tomography (PAT) \cite{PoudelLouAnastasio:2019,XiaWangHan:2022}.
Recent advances in data acquisition have made large-scale datasets, such as dynamic, multi-spectral, multi-frequency and multi-energy data in CT and PAT, increasingly common. 
However, the potential for high-volume data to improve reconstruction accuracy is counterbalanced by a significant computational bottleneck, i.e., massive datasets combined with costly forward models will make classical reconstruction techniques \cite{EnglHankeNeubauer:1996}, such as variational regularization and deterministic iterative methods, computationally prohibitive.
Meanwhile, the corresponding inverse problems often involve high-dimensional or even infinite-dimensional unknown signals with inherent block structures, such as those arising from different spatial subdomains or material components. 
These two sources of large-scale complexity motivate scalable regularization methods in both data and signal sizes (i.e., $n_y$ and $n_x$).

In recent years, stochastic iterative regularization methods, including stochastic gradient descent (SGD) \cite{RobbinsMonro:1951,JinLu:2019,JahnJin:2020,JinZhouZou:2021siuq,ZhouDSGD:2024} and randomized block coordinate descent (RBCD) \cite{Nesterov：2012,JinLiu:2025}, have gained much interest in the inverse problems community.
We refer to recent surveys \cite{EhrhardtKereta:2025,JinXiaZhou:2025} for an overview.
A notable strength of this class of methods is that the computational cost per iteration is independent of the data size $n_y$ or the signal size $n_x$, which enables direct scalability to massive datasets and high-dimensional signals, and is therefore especially appealing in the era of big data.
However, despite their computational appeal, the mathematical analysis of these two types of stochastic iterative methods for inverse problems in the lens of regularization theory has not been fully explored, especially their combinations.

Both SGD and RBCD can be interpreted as randomized versions of the classical Landweber method \cite{Landweber:1951}.
To illustrate the idea, we consider the following ill-posed optimization problem 
\begin{equation*}
\min_{x\in X} J(x),
\end{equation*}
where the objective function $J(x)$ is given by
\begin{align*}
J (x) &= \frac{1}{2 n_y}\|A x-y^\delta\|_Y^2 = \frac{1}{n_y}\sum_{i=1}^{n_y} f_i(x),\\
\mbox{with} \quad f_i(x)&=\frac{1}{2} \|A_ix-y_i^\delta\|_{Y_i}^2=\frac{1}{2} \bigg\|\sum_{j=1}^{n_x} A_{i,j}x_j-y_i^\delta\bigg\|_{Y_i}^2.
\end{align*}
Given an initial guess $x_1^\delta\equiv x_1\in X$ and the stepsize schedule $\{\eta_k\}_{k\geq 1}\subset (0,\infty)$, SGD reads
\begin{equation*}
x_{k+1}^\delta = x_k^\delta - \eta_k f'_{i_k}({ x_k^\delta})
, \quad k=1,2,\cdots,
\end{equation*}
while RBCD updates the iterate $x_{k+1}^\delta= (x_{k+1,1}^\delta, \cdots, x_{k+1, n_x}^\delta)^\top$ by
\begin{equation*}
x_{k+1,j}^\delta = 
\begin{cases}
x_{k,j_k}^\delta - \eta_k \nabla_{j_k} J(x_k^\delta), & \mbox{when} \;j =  j_k, \\
x_{k,j}^\delta, & \mbox{otherwise}, 
\end{cases}\quad k=1,2,\cdots,
\end{equation*}
where $\nabla_{j_k} J(x):=\nabla_{x_{j_k}} J(x)$ denotes the partial gradient of $J$ with respect to $x_{j_k}$, the indices $i_k$ and $j_k$ are sampled uniformly at random from the set $\{1,\ldots,n_y\}$ and $\{1,\ldots,n_x\}$, respectively. 
At each iteration, SGD updates all components of $x_{k+1}^\delta$ using the gradient $f'_{i_k}$ derived from a single data pair $(A_{i_k},y_{i_k}^\delta)$ as the random gradient estimate, while RBCD only updates one component of $x_{k+1}^\delta$ using a partial gradient of the entire objective $J$, leaving the other components unchanged. 
These two types of stochastic iterative methods are complementary rather than competing. 
SGD improves data scalability, whereas RBCD enhances signal scalability. 
This motivates a doubly randomized method that samples both data and signal at each iteration for large-scale and structured inverse problems. 
Combining the updating strategies of both SGD and RBCD gives the idea of the doubly stochastic block gradient descent (DSBGD) method (named the double stochastic block method for well-posed optimization
problems in \cite{BaoGuoXingLi:2026}) that reads
\begin{equation}\label{eqn:DSBGD}
x_{k+1,j}^\delta = 
\begin{cases}
x_{k,j_k}^\delta - \eta_k \nabla_{j_k} f_{i_k}(x_k^\delta)=x_{k,j_k}^\delta - \eta_kA_{i_k,j_k}^*(A_{i_k}{x_k^\delta}-y_{i_k}^\delta)
, & \mbox{when} \;j =  j_k, \\
x_{k,j}^\delta, & \mbox{otherwise}, 
\end{cases}
\end{equation}
where $A_{i,j}^*$ denotes the adjoint of  $A_{i,j}$.
It is worth noting that during the numerical implementation, the term $A_{i_k}x_k^\delta-y_{i_k}^\delta=(Ax_k^\delta-y^\delta)_{i_k}$ in \eqref{eqn:DSBGD} is the $i_k$th component of the residual $r_{k}^\delta:=Ax_k^\delta-y^\delta$ and can be calculated in a recursive way with less computational cost than computing the term $A_{i_k}x_k^\delta-y_{i_k}^\delta$ directly:
\begin{align*}
r_{k+1}^\delta = r_{k}^\delta+A(x_{k+1}^\delta-x_{k}^\delta)
=r_{k}^\delta+A_{\cdot,j_k}(x_{k+1,j_k}^\delta-x_{k,j_k}^\delta).
\end{align*}
Beyond uniform sampling, other strategies for selecting the indices $i_k$ and $j_k$ can also be used, including random shuffling \cite{Ying:2018,Haochen:2019,SafranShamir:2020} and mini-batch sampling \cite{LuMathe2022}. 
In this work, we focus on a simple version of the DSBGD method defined in \eqref{eqn:DSBGD}, with an equivalent version given in Algorithm \ref{alg:DSBGD} for numerical implementation, and leave extensions to future work.

\medskip
\begin{algorithm}[H]
\SetAlgoLined
Set initial guess $x_1^\delta=x_1\in X$, initial residual $r_1^\delta=Ax_1-y^\delta$, and stepsize schedule $\{\eta_k\}_{k\geq 1}\subset (0,\infty)$\\
 \For{$k=1,2,\cdots$}{
Draw $i_k$ and $j_k$ i.i.d. uniformly from $\{1,\cdots,n_y\}$ and $\{1,\cdots,n_x\}$, respectively\\
Compute $g = \eta_k A_{i_k,j_k}^*r_{k,i_k}^\delta$\\
Update 
$\;\;
x_{k+1,j_k}^\delta = x_{k,j_k}^\delta - g, \quad  r_{k+1}^\delta 
=r_{k}^\delta-A_{\cdot,j_k} g \quad
\mbox{and}\quad x_{k+1,j}^\delta = x_{k,j}^\delta, \;\;\forall j\neq j_k
$
\\
Check the stopping criterion}
\caption{Doubly Stochastic Block Gradient Descent (DSBGD) for problem \eqref{eqn:lininv}.\label{alg:DSBGD}}
\end{algorithm}

\medskip

In this work, we contribute to the convergence analysis of the proposed DSBGD defined in \eqref{eqn:DSBGD} for a class of linear inverse problems of the form \eqref{eqn:lininv} from the perspective of regularization theory. 
Under suitable \textit{a priori} stopping rules, we establish the regularizing property of the proposed method; see Theorem \ref{thm:conv-exact}, which guarantees the convergence for exact data, and Theorem \ref{thm:conv-noisy}, which covers the noisy case.
Further, with the additional source condition and a condition on the forward operator $A$ (i.e., Assumptions \ref{ass:source} and \ref{ass:A}), we achieve the order-optimal convergence rates of the proposed method with polynomially decaying stepsize schedules for low-regularity solutions, which are comparable with that for the Landweber method \cite{HankeNeubauerScherzer:1995} and SGD \cite{JinLu:2019,JinZhouZou:2020}.
Moreover, we investigate the proposed DSBGD method under the discrepancy principle, an \textit{a posteriori} stopping rule, and prove both that it terminates in a finite number of iterations with high probability and that the mean squared iterate errors decrease monotonically before stopping; see Theorem \ref{thm:termination} and Proposition \ref{prop:decay_dp}.
When $n_y=1$, the proposed DSBGD method defined in \eqref{eqn:DSBGD} reduced to RBCD. 
The convergence of (R)BCD has been investigated in the works \cite{RabanserNeumannHaltmeier:2019,JinLiu:2025} for linear inverse problems. 
Specifically, the work \cite{RabanserNeumannHaltmeier:2019} studied the block coordinate descent (BCD) method with the index $j_k$ selected in a cyclic fashion. 
The authors proved the regularizing property of BCD when the operator $A$ has a particular tensor product form, a condition stronger than Assumption \ref{ass:A}. 
Then \cite{JinLiu:2025} generalized these results to RBCD with a general forward operator $A$ and further derived a strong convergence result for the specific operator $A$ considered in \cite{RabanserNeumannHaltmeier:2019}.
However, convergence rates of these methods remain largely missing. 
Moreover, the additional randomness introduced by sampling data at each iteration destroys the monotonicity of quantities related to the residual, posing significant challenges for the convergence analysis of DSBGD.
From a different perspective, DSBGD has also been investigated for well-posed optimization problems in \cite{BaoGuoXingLi:2026}.
However, these results are
not applicable to ill-posed problems, which differ fundamentally from well-posed ones.
Compared with the existing works \cite{RabanserNeumannHaltmeier:2019,JinLiu:2025,BaoGuoXingLi:2026},
we contribute to both the regularizing property and convergence rates in terms of the noise level $\delta$ of the proposed method for linear inverse problems of the form \eqref{eqn:lininv}.
To the best of our knowledge, this is the first convergence analysis for doubly stochastic iterative methods for inverse problems. Moreover, it provides the first convergence rate results for randomized coordinate descent-type iterative methods in this setting.

The rest of the work is organized as follows. 
First, the main results, i.e., the regularizing property and convergence rates under \textit{a priori} stopping rule in Theorems \ref{thm:conv-noisy}, \ref{thm:err-total} and Corollary \ref{cor:rate}, and the finite-iteration termination property under the discrepancy principle \eqref{eqn:discrepancy_1} in Theorem \ref{thm:termination}, along with relevant discussions, are presented in Section \ref{sec:main}. 
Then, detailed proofs and remarks on the regularizing and finite-iteration termination properties (Theorems \ref{thm:conv-noisy} and \ref{thm:termination}) and convergence rate analysis (Theorem \ref{thm:err-total}) are given in Sections \ref{sec:conv} and \ref{sec:rate}, respectively. 
Finally, several numerical experiments are presented in Section \ref{sec:num} to complement the theoretical analysis, indicating the feasibility and efficiency of DSBGD. 
For better readability, supplementary estimates and the lengthy technical proofs of several results are deferred to Appendix \ref{app:estimate}. 
Throughout, we denote the $(k-1)$th iterates for the exact data $y^\dag$ and the noisy data $y^\delta$ by $x_k$ and $x_k^\delta$ respectively.
We denote the inner product by $\langle\cdot,\cdot\rangle$ and suppress the subscripts in norms and inner products, as the underlying spaces are clear from the context.

\section{Main results and discussions}\label{sec:main}

In this section, we present the main results of the work. 
First, we define the reference solution $x^\dag$ as the unique minimum-norm solution relative to the initial guess $x_1$, given by
\begin{equation}\label{eqn:min-norm}
  x^\dag = \arg\min_{x\in X: A x = y^\dag} \|x-x_1\|.
\end{equation}
When analyzing the convergence behavior of an iterative method, the choice of stepsize schedules $\{\eta_k\}_{k\geq 1}$ plays a crucial role. 
Therefore, for analyzing the convergence of DSBGD, we introduce three classes of choices for the stepsize schedule $\{\eta_k\}_{k=1}^\infty$ in the following assumption.

\begin{assumption}\label{ass:stepsize}
Let $L:= \max_{1\leq i\leq n_y}\|A_i\|^2$, the stepsizes $\{\eta_k\}_{k\geq 1}$ satisfies one of the following conditions.
\begin{itemize}
\item[$\rm(i)$] $\eta_k =\mu_k L^{-1}$ with $0<\mu_k\leq c_0<2$ and $\sum_{k=1}^\infty \mu_k =\infty$.
\item[$\rm(ii)$] $\eta_k =\mu_k \|A_{i_k}\|^{-2}$ with $0<\mu_k\leq c_0<2$ and $\sum_{k=1}^\infty \mu_k =\infty$.
\item[$\rm(iii)$] $\eta_k=c_0L^{-1}k^{-\alpha}$ with $0<c_0< 2$.
\end{itemize}
\end{assumption}
The choice in Assumption \ref{ass:stepsize}(ii) represents a class of adaptive stepsizes that depends on the random sampling index $i_k$, and can be larger than that in (i). 
Under suitable \textit{a priori} stopping rules, both stepsize schedules in (i) and (ii) can turn the proposed DSBGD method into a regularization method; see Theorem \ref{thm:conv-noisy}.
Moreover, the stepsize in (i) is more general than that in (iii) used for deriving the convergence rates in Theorem \ref{thm:err-total}.
The latter is often referred to as a polynomially decaying stepsize schedule in the literature. 
Let $\mathcal{F}_k$ and $\mathcal{F}_k^X$ be the filtration generated by the random indices $\{i_1,\ldots,i_{k-1},j_1,\ldots,j_{k-1}\}$ and $\{j_1,\ldots,j_{k-1}\}$, respectively, $\mathcal{F}={\bigvee_{k=1}^\infty} \mathcal{F}_k$, $\mathcal{F}^X={\bigvee_{k=1}^\infty} \mathcal{F}_k^X$, and $(\Omega,\mathcal{F},\Prob)$ and $(\Omega,\mathcal{F}^X,\Prob)$ be the associated probability spaces. 
Then let $\E[\cdot]$ and $\E_X[\cdot]$ denote taking the expectation with respect to the filtration $\mathcal{F}$ and $\mathcal{F}^X$, respectively. 
The DSBGD iterate $x_k^\delta$ is random but measurable with respect to $\mathcal{F}_k$. 
Now, we state the main results on the regularizing property and convergence rates of DSBGD under \textit{a priori} stopping rules, as well as the finite-iteration termination property under the discrepancy principle (cf.\;\eqref{eqn:discrepancy_1}).

\begin{theorem}[Regularizing property under \textit{a priori} stopping rule]\label{thm:conv-noisy}
Let Assumption \ref{ass:stepsize}(i) or (ii) be fulfilled, and $x_k^\delta$ be the DSBGD iterate defined in \eqref{eqn:DSBGD}. 
If the stopping index $k(\delta)\geq1$ is chosen such that
\begin{equation*}
 \lim_{\delta\to0^+}k(\delta)=\infty \quad\mbox{and}\quad \lim_{\delta\to0^+}\delta^2\sum_{t=1}^{k(\delta)}\mu_t = 0,
\end{equation*} 
then there holds 
$\lim_{\delta\to0^+}\E[\|x_{k(\delta)}^\delta -x^\dag\|^2]=0.$
\end{theorem}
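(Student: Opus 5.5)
The plan is the standard bias--variance splitting used for Landweber and SGD. With $x_k$ the DSBGD iterate for exact data $y^\dag$, driven by the \emph{same} random indices $\{i_t,j_t\}$ as $x_k^\delta$, we write
\begin{equation*}
\E[\|x_{k}^\delta-x^\dag\|^2]\le 2\E[\|x_k-x^\dag\|^2]+2\E[\|x_k^\delta-x_k\|^2].
\end{equation*}
The first term is handled by the exact-data convergence result, Theorem \ref{thm:conv-exact}: since $k(\delta)\to\infty$, it gives $\E[\|x_{k(\delta)}-x^\dag\|^2]\to0$. If that theorem is not yet available at this point, I would prove it directly. The argument would use the monotonicity of $\E[\|x_k-x^\dag\|^2]$ obtained from the descent identity below with $\xi=0$. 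It would then use the summability $\sum_k\mu_k\E[\|Ax_k-y^\dag\|^2]<\infty$. Finally, because $x_k-x_1$ stays in $\overline{\mathrm{range}(A^*)}$, one can run a Hanke--Neubauer--Scherzer-type Cauchy argument in expectation, choosing indices where the residual is minimal.

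The second term is the propagated noise, and it is where the hypothesis $\delta^2\sum_{t\le k(\delta)}\mu_t\to0$ enters. Set $e_k=x_k^\delta-x_k$ and let $P_{j}$ denote the embedding of block $j$ into $X$. The error recursion is
\begin{equation*}
e_{k+1}=e_k-\eta_kP_{j_k}A_{i_k,j_k}^*(A_{i_k}e_k-\xi_{i_k}).
\end{equation*}
Expanding the square gives
\begin{equation*}
\|e_{k+1}\|^2=\|e_k\|^2-2\eta_k\langle A_{i_k}P_{j_k}P_{j_k}^*e_k,A_{i_k}e_k-\xi_{i_k}\rangle+\eta_k^2\|A_{i_k,j_k}^*(A_{i_k}e_k-\xi_{i_k})\|^2.
\end{equation*}
We then take the conditional expectation over $j_k$ first, with $i_k$ fixed. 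Since $\frac1{n_x}\sum_jP_jP_j^*=\frac1{n_x}I$, the cross term becomes $-\frac{2\eta_k}{n_x}\langle A_{i_k}e_k,A_{i_k}e_k-\xi_{i_k}\rangle$. For the quadratic term, $\sum_j\|A_{i,j}^*w\|^2=\|A_i^*w\|^2\le\|A_i\|^2\|w\|^2$, so it is at most $\frac{\eta_k^2\|A_{i_k}\|^2}{n_x}\|A_{i_k}e_k-\xi_{i_k}\|^2$. Under (i) or (ii) we have $\eta_k\|A_{i_k}\|^2\le\mu_k\le c_0$, so
\begin{equation*}
\E_{j_k}\|e_{k+1}\|^2\le\|e_k\|^2-\frac{\eta_k}{n_x}\big(2\langle a,a-b\rangle-c_0\|a-b\|^2\big),
\end{equation*}
where $a=A_{i_k}e_k$ and $b=\xi_{i_k}$. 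Young's inequality gives $2\langle a,a-b\rangle-c_0\|a-b\|^2\ge -C_{c_0}\|b\|^2$, with $C_{c_0}$ depending only on $c_0<2$; the $\|a\|^2$ terms can be absorbed precisely because $c_0<2$. Hence $\E_{j_k}\|e_{k+1}\|^2\le\|e_k\|^2+C\eta_k\|\xi_{i_k}\|^2$.

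Next we average over $i_k$. Under (i), $\eta_k=\mu_k/L$ is deterministic, so this gives $C\mu_kL^{-1}n_y^{-1}\delta^2$. Under (ii), $\eta_k\|\xi_{i_k}\|^2=\mu_k\|\xi_{i_k}\|^2/\|A_{i_k}\|^2\le\mu_k\|\xi_{i_k}\|^2/\min_i\|A_i\|^2$, which gives the same form with a different constant (assuming $A_i\neq0$, as (ii) tacitly requires). Summing with $e_1=0$ yields $\E[\|e_{k}\|^2]\le C\delta^2\sum_{t<k}\mu_t$, and this tends to $0$ along $k=k(\delta)$.

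The main obstacle is the first term under the random stepsize (ii), because there the effective step involves the weights $\|A_{i}\|^{-2}$. The expected exact-data update is then the Landweber step for a weighted operator, $\mathbb{E}$-gradient $\frac{\mu_k}{n_xn_y}\sum_i\|A_i\|^{-2}A_i^*(A_ix_k-y^\dag_i)$. To handle this, I would work with the weighted residual $\sum_i\|A_i\|^{-2}\|A_i(x_k-x^\dag)\|^2$, which is equivalent to $\|A(x_k-x^\dag)\|^2$ up to constants, and check that the minimum-norm solution is unchanged: the range of the weighted adjoint equals that of $A^*$. With this in place, the same descent argument yields a summable residual and convergence to $x^\dag$.
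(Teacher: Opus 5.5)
You follow the paper's route: the same splitting, and your noise-propagation bound $\E[\|x_k^\delta-x_k\|^2]\le C\delta^2\sum_{t<k}\mu_t$ is Proposition~\ref{prop:stability}, derived the same way (average over $j_k$, Young's inequality with weight $2-\mu_k$, and $\min_i\|A_i\|^2$ under (ii)). That half is correct.

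The gap is in the exact-data term, and it is more than a technicality. Your fallback argument rests on $x_k-x_1\in\overline{{\rm range}(A^*)}=\mathcal{N}(A)^\perp$. This holds for SGD but not for block updates. Since $\langle D_j^{n_x}A^*D_i^{n_y}w,z\rangle=\langle w,D_i^{n_y}AD_j^{n_x}z\rangle$, the increments stay in $\mathcal{N}(A)^\perp$ only if $AD_j^{n_x}z=0$ for every $z\in\mathcal{N}(A)$. Without this, the iterates converge to a random solution, and the conclusion of the statement fails.

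Here is a counterexample. Take $n_y=1$, $n_x=2$, $X_1=X_2=Y_1=\mathbb{R}$, $A(u,v)=u+v$, $y^\dag=1$ and initial guess $0$, so that $x^\dag=(\frac12,\frac12)$. Take $L=2$ and $\mu_k\equiv 1.8$, which satisfies both (i) and (ii). Whichever block is chosen, each step multiplies the residual by $0.1$. So the block chosen at $k=1$ moves to $0.9$, and all later moves add up to $0.1$. Hence on every path $\|x_k-x^\dag\|^2\to\|x^*-x^\dag\|^2\ge 2(0.4)^2=0.32$. Your own stability bound then gives $\liminf_{\delta\to0^+}\E[\|x_{k(\delta)}^\delta-x^\dag\|^2]\ge 0.32$.

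Appealing to Theorem~\ref{thm:conv-exact}, as the paper also does, does not close this. That proof ends by invoking uniqueness of the solution, which silently uses $x^*-x_1\in\mathcal{N}(A)^\perp$, exactly what fails here. The statement needs a structural hypothesis such as Assumption~\ref{ass:A}, or merely $D_j^{n_x}\mathcal{N}(A)\subseteq\mathcal{N}(A)$ for all $j$, which keeps the iterates in $x_1+\mathcal{N}(A)^\perp$.

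Under such a hypothesis, with deterministic steps (i), you can also bypass the Hanke--Neubauer--Scherzer argument. That argument is delicate here anyway: for $t<\ell$ the iterate $x_\ell$ depends on $(i_t,j_t)$, so $\E[\langle D_{j_t}^{n_x}A^*D_{i_t}^{n_y}r_t,x_\ell-x^\dag\rangle]$ does not reduce to $n^{-1}\E[\langle r_t,r_\ell\rangle]$. Instead, iterate $e_{k+1}=P^X_ke_k+\eta_kN_kr_k$ with $e_k=x_k-x^\dag$:
\begin{itemize}
\item The mean part is Landweber applied to $e_1\in\mathcal{N}(A)^\perp$, so it tends to zero.
\item By Lemma~\ref{lem:N}, the fluctuation is at most $C_A\sum_{\ell\le k}\eta_\ell^2\|\prod_{t=\ell+1}^kP^X_tB_X^{1/2}\|^2\E[\|r_\ell\|^2]$.
\item This bound vanishes as $k\to\infty$: split the sum at a fixed index, and use $\sum_\ell\mu_\ell\E[\|r_\ell\|^2]<\infty$ together with $\sum_t\eta_t=\infty$.
\end{itemize}
The weighting under (ii) that you single out as the main obstacle is a genuine but secondary issue. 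As you note, it leaves $\mathcal{N}(A)$, and hence $x^\dag$, unchanged.
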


We also discuss the behavior of the DSBGD iterate when stopped by one of the most popular \textit{a posteriori} stopping rules, namely the discrepancy principle due to the Russian mathematician Vladimir Morozov \cite{Morozov:1966}. 
This rule chooses the stopping index $k(\delta)$ by
\begin{align}\label{eqn:discrepancy_1}
k(\delta)=\min\{k\geq 1:\;\|Ax_{k}^\delta-y^\delta\|\leq \tau \delta\},
\end{align}
with a fixed constant $\tau>1$.

\begin{theorem}[Finite-iteration termination property under the discrepancy principle \eqref{eqn:discrepancy_1}]\label{thm:termination}
Let Assumption \ref{ass:stepsize}(iii) be fulfilled with $\alpha\in(0,\frac23)$ and $c_0 \leq \alpha /2$, and the stopping index $k_*=k(\delta)$ be chosen by \eqref{eqn:discrepancy_1}. 
Then there hold
\begin{align*}
\mathbb{P}\Big(k(\delta)\leq \big[(c_{{\rm stop}}\delta^{-1})^{\frac{4}{2-3\alpha}}\big]+1\Big)\geq 1-4(\tau-1)^{-2}nc^*\big(c_{{\rm stop}}^{-\frac{4(1-\alpha)}{2-3\alpha}}+ c_{{\rm stop}}^{-\frac{2\alpha}{2-3\alpha}}\big)\delta^{\frac{2\alpha}{2-3\alpha}},
\end{align*}
and
\begin{align*}
\lim_{\delta\to 0^+}\mathbb{P}\Big(k(\delta)\leq \big[(c_{{\rm stop}}\delta^{-1})^{\frac{2}{1-\alpha}}\ln(e+\delta^{-1})\big]+1\Big)=1,
\end{align*}
with $[\cdot]$ taking the integral part of a real number, the constant $c_{{\rm stop}}=\frac{2\sqrt{nL}\|e_1^\delta\|}{\sqrt{2^\alpha e c_0}(\tau-1)}$, and the constant $c^*$ depending on $L$, $\alpha$ and $c_0$ but independent of $n$, $k$ and $\delta$.
\end{theorem}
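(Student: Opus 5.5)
We will prove the termination statement by bounding the probability that the discrepancy principle has not yet fired. This is done through a Markov-type inequality applied to the residual. Note that $n$ in the statement plays the role of $n_x$ (possibly $n_xn_y$), and $e_k^\delta:=x_k^\delta-x^\dag$.

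\textbf{Event inclusion.} The key observation is that $\{k(\delta)>K\}\subseteq\{\|Ax_k^\delta-y^\delta\|>\tau\delta \ \forall k\le K\}$. Writing $Ax_k^\delta-y^\delta=Ae_k^\delta-\xi$, this implies $\|Ae_k^\delta\|>(\tau-1)\delta$ for all $k\le K$. Hence for any nonnegative weights $w_k$,
\begin{equation*}
\Prob(k(\delta)>K)\le \Prob\Big(\sum_{k=1}^K w_k\|Ae_k^\delta\|^2> (\tau-1)^2\delta^2\sum_{k=1}^K w_k\Big),
\end{equation*}
and Markov's inequality reduces everything to bounding $\E[\sum_k w_k\|Ae_k^\delta\|^2]$. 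We choose $w_k=\eta_k$, the natural weight.

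\textbf{Energy identity.} Conditioning on $\mathcal F_k$ and averaging over $i_k,j_k$ gives
\begin{equation*}
\E[\|e_{k+1}^\delta\|^2\mid\mathcal F_k]=\|e_k^\delta\|^2-\frac{2\eta_k}{n_xn_y}\langle Ae_k^\delta,Ae_k^\delta-\xi\rangle+\frac{\eta_k^2}{n_xn_y}\sum_{i,j}\|A_{i,j}^*(A_ie_k^\delta-\xi_i)\|^2.
\end{equation*}
Since $\sum_j\|A_{i,j}^*v\|^2=\|A_i^*v\|^2\le L\|v\|^2$, the last term is at most $\frac{\eta_k^2L}{n_xn_y}\|Ae_k^\delta-\xi\|^2$. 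Because $c_0\le\alpha/2<1$, we have $\eta_kL\le c_0$. Young's inequality then yields
\begin{equation*}
\E\|e_{k+1}^\delta\|^2\le\E\|e_k^\delta\|^2-\frac{c\,\eta_k}{n}\E\|Ae_k^\delta\|^2+\frac{C\eta_k}{n}\delta^2 .
\end{equation*}
Telescoping gives
\begin{equation*}
\sum_{k\le K}\eta_k\E\|Ae_k^\delta\|^2\lesssim n\|e_1^\delta\|^2+\delta^2\sum_{k\le K}\eta_k .
\end{equation*}
The $\delta^2$ term unfortunately competes with the threshold $(\tau-1)^2\delta^2\sum\eta_k$. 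This is why the statement instead features the bound $\delta^{2\alpha/(2-3\alpha)}$ and the constant $c^*$. I therefore expect the actual route to split $e_k^\delta=\E[e_k^\delta]+(e_k^\delta-\E e_k^\delta)$. The mean part follows a deterministic Landweber-type recursion $\bar e_{k+1}=(I-\frac{\eta_k}{n}A^*A)\bar e_k+\frac{\eta_k}{n}A^*\xi$.

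\textbf{Deterministic part.} Spectral estimates of the form $\|A\prod_{t<k}(I-\frac{\eta_t}{n}A^*A)\|\le \big(\frac{n}{e\sum_{t<k}\eta_t}\big)^{1/2}$ give
\begin{equation*}
\|A\bar e_k\|\le \sqrt{nL}\,\|e_1^\delta\|\big/\sqrt{e\,c_0 2^{\alpha}k^{1-\alpha}}+\delta .
\end{equation*}
Here we use $\sum_{t<k}t^{-\alpha}\gtrsim k^{1-\alpha}$, which is where $2^\alpha e c_0$ enters $c_{\rm stop}$. Consequently $\|A\bar e_k\|-\delta\le\frac{\tau-1}{2}\delta$ as soon as $k^{(1-\alpha)/2}\ge c_{\rm stop}\delta^{-1}$.

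\textbf{Variance part (main obstacle).} We need a bound $\E\|A(e_k^\delta-\bar e_k)\|^2\le n c^*(k^{-\alpha}+\delta^2k^{1-2\alpha}\cdots)$, uniformly decaying in $k$. The plan is to write the variance through the martingale differences $\eta_t(g_t-\E_t g_t)$ propagated by the Landweber products. Each term is bounded by $\eta_t^2\|A\Pi_{t+1}^k\|^2\E\|Ae_t^\delta-\xi\|^2$. This requires an a priori uniform bound $\E\|Ae_t^\delta-\xi\|^2\le C$, which follows from the energy estimate. The sum $\sum_t t^{-2\alpha}(\sum_{s=t}^k s^{-\alpha})^{-1}$ is then estimated by the standard SGD lemmas, giving order $k^{-\alpha}$ up to logs; this is where $\alpha<2/3$ and $c_0\le\alpha/2$ are used to control the kernel sums. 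This is the technical heart, and the bounds deferred to Appendix \ref{app:estimate} are presumably exactly such sums.

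\textbf{Conclusion.} At $K=[(c_{\rm stop}\delta^{-1})^{4/(2-3\alpha)}]+1$ the mean part is below threshold. Chebyshev's inequality then gives
\begin{equation*}
\Prob(k(\delta)>K)\le\Prob\big(\|A(e_K-\bar e_K)\|>\tfrac{\tau-1}{2}\delta\big)\le 4(\tau-1)^{-2}\delta^{-2}\E\|A(e_K-\bar e_K)\|^2 .
\end{equation*}
Substituting $K$ yields the displayed powers of $c_{\rm stop}$ and $\delta$. The second claim follows in the same way with $K=(c_{\rm stop}\delta^{-1})^{2/(1-\alpha)}\ln(e+\delta^{-1})$. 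There the logarithm makes the bias strictly below threshold with a margin, and the variance bound $\delta^{-2}K^{-\alpha}\to0$ tends to zero by the choice of exponent.
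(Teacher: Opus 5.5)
Your eventual route, after you abandon the weighted Markov argument, is the paper's. You split $r_k^\delta=\E[r_k^\delta]+(r_k^\delta-\E[r_k^\delta])$ and bound the deterministic mean by $\|\E[r_k^\delta]\|\le\sqrt{nL/(2^\alpha e c_0)}\,\|e_1^\delta\|k^{-\frac{1-\alpha}{2}}+\delta$. Your $\|A\bar e_k\|$ should read $\|A\bar e_k-\xi\|$, compared with $\frac{\tau+1}{2}\delta$. You then apply Chebyshev to the fluctuation. The mean part is fine; the gap is the variance step, and it is substantive.

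\emph{The a priori bound is unjustified.} The bound $\E[\|Ae_t^\delta-\xi\|^2]\le C$ does not follow from your energy inequality. Telescoping it only gives $\E[\|e_k^\delta\|^2]\le\|e_1^\delta\|^2+Cn^{-1}\delta^2\sum_{t<k}\eta_t$, which grows like $\delta^2k^{1-\alpha}$, plus a weighted-sum control of $\E[\|Ae_t^\delta\|^2]$. Since the residual of this doubly stochastic scheme is not monotone in expectation, no pointwise residual bound comes out.

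\emph{Your variance bound cannot give the theorem.} Even granting boundedness, a non-decaying residual bound only yields a fluctuation of order $k^{-\min(\alpha,1-\alpha)}$ up to logarithms. So your ``$k^{-\alpha}$'' is wrong for $\alpha>\frac12$. With the growing bound you get your $k^{-\alpha}+\delta^2k^{1-2\alpha}$. At $K=[(c_{\rm stop}\delta^{-1})^{\frac{4}{2-3\alpha}}]+1$, Chebyshev then yields $\delta^{-2}K^{-\alpha}+K^{1-2\alpha}$. For $\alpha<\frac12$ the first term is of order $\delta^{\frac{10\alpha-4}{2-3\alpha}}$, not $\delta^{\frac{2\alpha}{2-3\alpha}}$, and is not even small for $\alpha\le\frac25$. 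The second term diverges. In the second claim, $\delta^{-2}K^{-\alpha}\sim\delta^{\frac{4\alpha-2}{1-\alpha}}$ up to logarithms, which again does not vanish for $\alpha<\frac12$. So ``substituting $K$ yields the displayed powers'' is false for the bound you propose.

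\emph{The missing ingredient} is a \emph{decaying} residual bound, $\E[\|r_\ell^\delta\|^2]\le nc_1^*\ell^{-(1-\alpha)}+nc_2^*\delta^2$ (Lemma \ref{lem:r_k^delta}). It is proved by induction on $k$ from the expansion $r_{k+1}^\delta=\prod_{t=1}^kP^Y_tr_1^\delta+\sum_{\ell=1}^k\eta_\ell\prod_{t=\ell+1}^kP^Y_tAN_\ell r_\ell^\delta$. The ingredients are Lemma \ref{lem:N} and the kernel sums of Lemma \ref{lem:sum} with weights $\ell^{-(1-\alpha)}$ and $1$. The condition $c_0\le\alpha/2$ is what makes the coefficient in front of $c_1^*$ in the fluctuation term smaller than one (it becomes $\frac2e$), so the induction closes. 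That, rather than the kernel sums, is where this condition enters.

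Feeding the two parts of this bound separately into the variance sum gives \eqref{eqn:r_k^delta_var}, namely $nc_1^*k^{-(1-\alpha)}+nc_2^*k^{-\frac\alpha2}\delta^2$. Here $\alpha<\frac23$ is used to dominate $k^{-\min(\alpha,1-\alpha)}$ by $k^{-\frac\alpha2}$. The exponent $\frac{4}{2-3\alpha}$ is exactly the one that equalizes $\delta^{-2}k^{-(1-\alpha)}$ and $k^{-\frac\alpha2}$.

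This also corrects your reading of the second claim. The mean is already below $\frac{\tau+1}{2}\delta$ at $k\approx(c_{\rm stop}\delta^{-1})^{\frac{2}{1-\alpha}}$, without the logarithm. The factor $\ln(e+\delta^{-1})$ is needed because at that $k$ the variance term $\delta^{-2}k^{-(1-\alpha)}\approx c_{\rm stop}^{-2}$ is merely bounded, and the extra logarithm drives it to zero.
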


\begin{remark}
Theorem \ref{thm:termination} indicates that the DSBGD iteration defined in \eqref{eqn:DSBGD} can be stopped by the discrepancy principle  \eqref{eqn:discrepancy_1} in finitely many steps with high probability, while Proposition \ref{prop:decay_dp} in Section \ref{sec:dp} guarantees the decay of the mean squared error $\E[\|x_{k}^\delta-x^\dag\|^2]$, i.e., a monotonically decreasing property, before stopping by rule \eqref{eqn:discrepancy_1}. 
\end{remark}

Next, we introduce two conditions, one on the exact solution $x^\dag$ and one on the forward operator $A$, which are crucial for deriving the convergence rates of DSBGD in Section \ref{sec:rate}.
The notation $\mathcal{N}(A)$ denotes the null space of $A$. 
\begin{assumption}[Source condition]\label{ass:source}
There exist some $\nu>0$ and $w\in \mathcal{N}(A)^\perp$ such that $x^\dag-x_1=B_X^\nu w$ and $\|w\|<\infty$, with $B_X=(n_x n_y)^{-1}A^*A$ and $\mathcal{N}(A)^\perp$ being the orthogonal complement of $\mathcal{N}(A)$.
\end{assumption}

\noindent Assumption \ref{ass:source} implicitly assumes a certain degree of regularity on the initial error $x^\dag-x_1$.
It is known as the source condition  \cite{EnglHankeNeubauer:1996} and is often used to establish the convergence rates of regularized solutions \cite{HankeNeubauerScherzer:1995,JinZhouZou:2020siopt,JinZhouZou:2021siuq,JinZhouZou:2022ip,JinZhou:2026}. 
Indeed, without a source condition, the convergence of regularized solutions may be arbitrarily slow \cite{EnglHankeNeubauer:1996}. 
Now, we provide the condition on $A$. We define $D_{j}^{n_x}:={\rm diag}(O_{X_1},\cdots,O_{X_{j-1}},I_{X_j},O_{X_{j-1}},\cdots,O_{X_{n_x}})$ with $O_{X_j}: X_j \rightarrow X_j$ and $I_{X_j}: X_j \rightarrow X_j$ being the zero operator and the identity operator, respectively.

\begin{assumption}\label{ass:A}
For any $j=1,\cdots,n_x$, there holds ${\rm range}(D_{j}^{n_x} A^*)\subseteq {\rm range}(A^*)$ or equivalently 
$D_{j}^{n_x} A^* A D_{j}^{n_x} \preceq c_j A^* A$,
with $C_A:=\sum_{j=1}^{n_x}c_j<\infty$.
\end{assumption}

\begin{remark}
We present a sufficient condition for Assumption \ref{ass:A}, cf.\;\eqref{eqn:condition_P}. 
Let $\{\lambda_t, \varphi_t\}_{t=1}^\infty$ denote the singular values and vectors of $B=(n_x  n_y)^{-1}A^*A$, i.e., $B(\cdot)=\sum_{t=1}^\infty \lambda_t\langle\varphi_t,\cdot\rangle\varphi_t$ and $\varphi_t=(\varphi_{t,1},\cdots,\varphi_{t,n_x})^\top\in X$ with $\varphi_{t,j}\in X_j$.
For any $1\leq j\leq n_x$, we define $P^{(j)}=(P^{(j)}_{t\ell})_{t,\ell\geq 1}$ with the $\{t\ell\}$-entry $P^{(j)}_{t\ell}:=\langle\varphi_{t,j} ,\varphi_{\ell,j}\rangle$. 
Let $\lambda=(\lambda_t)_{t\geq1}$ and $\Lambda :={\rm diag}(\lambda)$. 
Then it is easy to check that the inequality $D_{j}^{n_x} A^* A D_{j}^{n_x} \preceq c_j A^* A$ in Assumption \ref{ass:A} is equivalent to $(P^{(j)})^* \Lambda P^{(j)} \preceq c_j \Lambda$.
For any $a=(a_\ell)_{\ell\geq 1}$,  there hold
\begin{align*}
\langle (P^{(j)})^*  \Lambda P^{(j)}a,a\rangle&=  \langle  \Lambda P^{(j)}a,P^{(j)}a\rangle
= \sum_{t=1}^\infty \lambda_t\bigg(\sum_{\ell=1}^{\infty}a_\ell \langle \varphi_{t,j},\varphi_{\ell,j}\rangle\bigg)^2 \quad
\mbox{and}\quad \langle \Lambda a,a\rangle= 
\sum_{i=1}^\infty \lambda_i  a_i^2.
\end{align*}
Without loss of generality, we assume $\lambda_t>0$. 
Then by the Cauchy-Schwarz inequality, we have
\begin{align*}
\langle (P^{(j)})^*  \Lambda P^{(j)}a,a\rangle&=\sum_{t=1}^\infty \bigg(\sum_{\ell=1}^{\infty} \sqrt{\lambda_\ell} a_\ell \sqrt{\frac{\lambda_t}{\lambda_\ell}}\langle \varphi_{t,j},\varphi_{\ell,j}\rangle\bigg)^2
\leq \sum_{t=1}^\infty  \sum_{\ell=1}^{\infty} \frac{\lambda_t}{\lambda_\ell}|\langle \varphi_{t,j},\varphi_{\ell,j}\rangle|^2 \bigg(\sum_{i=1}^{\infty} \lambda_i a_i^2\bigg)\\
&=\sum_{t=1}^\infty  \sum_{\ell=1}^{\infty} \frac{\lambda_t}{\lambda_\ell}|\langle \varphi_{t,j},\varphi_{\ell,j}\rangle|^2 \langle \Lambda a,a\rangle,
\end{align*}
and thus
\begin{align*}
\sum_{j=1}^{n_x}\langle (P^{(j)})^*  \Lambda P^{(j)}a,a\rangle\leq
\sum_{\ell=1}^{\infty} \lambda_\ell^{-1}\sum_{t=1}^\infty   \lambda_t\sum_{j=1}^{n_x}|\langle \varphi_{t,j},\varphi_{\ell,j}\rangle|^2 \langle \Lambda a,a\rangle.
\end{align*}
Let $P=(P_{t\ell})_{t,\ell\geq 1}$ with the $\{t\ell\}$-entry $P_{t\ell}:=\sum_{j=1}^{n_x}|\langle\varphi_{t,j} ,\varphi_{\ell,j}\rangle|^2$, then
\begin{align*}
(P \lambda)_\ell= \sum_{t=1}^\infty P_{\ell t}\lambda_t
=\sum_{t=1}^\infty \lambda_t\sum_{j=1}^{n_x}|\langle\varphi_{\ell,j} ,\varphi_{t,j}\rangle|^2,
\end{align*}
which implies
\begin{align*}
\sum_{j=1}^{n_x}\langle (P^{(j)})^*  \Lambda P^{(j)}a,a\rangle\leq
\sum_{\ell=1}^{\infty} \lambda_\ell^{-1}  (P \lambda)_\ell\langle \Lambda a,a\rangle.
\end{align*}
If there exists a sequence of constants $\{C_\ell\}_{\ell\geq 1}$ such that 
\begin{equation}\label{eqn:condition_P}
(P \lambda)_\ell \leq C_\ell \lambda_\ell,\quad \forall    \ell\geq 1, \quad \mbox{and}\quad   \sum_{\ell=1}^\infty C_\ell<\infty.
\end{equation}
Then there holds
\begin{align*}
\sum_{j=1}^{n_x}\langle (P^{(j)})^*  \Lambda P^{(j)}a,a\rangle\leq
\sum_{\ell=1}^{\infty} C_\ell \langle \Lambda a,a\rangle,
\end{align*}
which yields Assumption \ref{ass:A}. 
The matrix $P$ quantifies the component-wise overlap between different singular vectors. In particular, $P_{t\ell}$ measures the interaction between $\varphi_t$ and $\varphi_\ell$ across the component spaces $\{X_j\}$. 
The value of $P_{t\ell}$ is small when the corresponding components $\varphi_{t,j}$ and $\varphi_{\ell,j}$ are nearly orthogonal in each $X_j$.
Both this stronger condition and Assumption \ref{ass:A} do not require different singular vectors to be component-wise orthogonal, but require the cross-scale interactions, as quantified by $P_{t\ell}$, to be sufficiently controlled. 
In other words, these assumptions prevents excessively strong and uncontrolled component-wise overlap between singular directions associated with very small singular values and those corresponding to much larger singular values.
\end{remark}

\begin{remark}\label{rem:tenosr_product_A}
An interesting special case of $A$ that satisfies Assumption \ref{ass:A} assumes a tensor product form and is given by 
\begin{align*}
Ax:= (V\otimes K) x=(\sum_{j=1}^{n_x} v_{1j} K x_j,\cdots,\sum_{j=1}^{n_x} v_{n_{y}j} K x_j )^\top, \quad \forall x \in X_1^{n_x},
\end{align*}
where $V=(v_{ij})$ is a $n_y\times n_x$ matrix with full column rank, $K: X_1 \to Y_1$ is a bounded linear operator, and $\otimes$ denotes the tensor product. 
Clearly, we can define $A_{i,j}x_j= v_{ij}Kx_j$.
Note that
\begin{align*}
A^* A = (V^* V) \otimes (K^* K)\quad \mbox{and} \quad D_{j}^{n_x} A^* A D_{j}^{n_x}= (D_{j}^{n_x}V^* V D_{j}^{n_x}) \otimes (K^* K).
\end{align*}
We denote the smallest eigenvalue of $V^*V$ by $\lambda_{\rm min}$, then there exists a constant $c_V>0$ such that $\lambda_{\rm min}\geq c_V$, since $V$ has full column rank. 
It follows that $$D_{j}^{n_x}V^* V D_{j}^{n_x}\preceq \|V\|^2 D_{j}^{n_x}\preceq \|V\|^2 I \preceq c_V^{-1}\|V\|^2 V^* V,$$ 
and hence 
\begin{align*}
D_{j}^{n_x} A^* A D_{j}^{n_x}= (D_{j}^{n_x}V^* V D_{j}^{n_x}) \otimes (K^* K)\preceq c_V^{-1}\|V\|^2(V^* V) \otimes (K^* K)=c_V^{-1}\|V\|^2A^* A.
\end{align*}
This specific class of linear forward operators is considered in \cite{RabanserNeumannHaltmeier:2019,JinLiu:2025} for the convergence analysis of (R)BCD. 
Such operators arise from various real-world applications, including multi-energy CT \cite{Zhang:2017,LiRavishankarLongFessler:2020}, spectral CT \cite{PerelliAndersen:2021}, electrocardiographic imaging \cite{CaraccioloCaiafa:2022} and dynamic magnetic resonance imaging \cite{BrahmaKolbitsch:2023}.
\end{remark}

The third main result presents the convergence rates for DSBGD in terms of the iteration number $k$ and the noise level $\delta$, under the additional source condition and the condition on $A$. 

\begin{theorem}[Convergence rates for noisy data]\label{thm:err-total}
Let Assumptions \ref{ass:stepsize}(iii), \ref{ass:source} and \ref{ass:A} be fulfilled with $c_0 \leq \alpha /2$, and $x_k^\delta$ be the DSBGD iterate defined in \eqref{eqn:DSBGD}. 
Then there holds
\begin{align*}
\E[\|x_k^\delta-x^\dag\|^2] \leq n c_* C_A k^{-\min(1,2\nu)(1-\alpha)}+ n c_{**} C_A k^{1-\alpha} \delta^2,
\end{align*}
where the constants $c_*$ and $c_{**}$ depends on $\nu$, $\|w\|$, $L$, $\alpha$ and $c_0$, but are independent of $n$, $k$ and $\delta$.
\end{theorem}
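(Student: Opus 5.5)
The plan is the usual bias--variance decomposition around a deterministic "mean" iteration. Here $n=n_xn_y$. Set $e_k^\delta=x_k^\delta-x^\dag$. Since $i_k,j_k$ are independent and uniform, the conditional expectation of the update is
\[
\E[x_{k+1}^\delta\mid\mathcal F_k]=x_k^\delta-\eta_k n^{-1}A^*(Ax_k^\delta-y^\delta).
\]
So the mean dynamics are a Landweber iteration with operator $B=B_X=n^{-1}A^*A$ and stepsize $\eta_k$. We introduce the deterministic Landweber iterate $\bar x_k^\delta$, $\bar x_{k+1}^\delta=\bar x_k^\delta-\eta_k n^{-1}A^*(A\bar x_k^\delta-y^\delta)$, with $\bar x_1^\delta=x_1$. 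Then we bound
\[
\E\|e_k^\delta\|^2\le 2\|\bar x_k^\delta-x^\dag\|^2+2\E\|x_k^\delta-\bar x_k^\delta\|^2 .
\]

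\emph{Mean term.} We write $\Pi_t^k=\prod_{s=t}^{k}(I-\eta_sB)$, with $\eta_s\|B\|\le c_0n^{-1}$ small. By the standard spectral estimates for polynomially decaying stepsizes (as for SGD, cf.\ \cite{JinZhouZou:2020}),
\[
\|\Pi_1^{k}B^\nu\|\le c\Big(\sum_{s\le k}\eta_s\Big)^{-\nu}\sim (nk^{1-\alpha})^{-\nu} n^{\nu},
\]
after scaling out $n$ from $B$. The noise propagation satisfies $\|\sum_t\eta_t\Pi_{t+1}^kn^{-1}A^*\xi\|^2\le c\,n^{-1}\sum_t\eta_t\,\delta^2\lesssim k^{1-\alpha}\delta^2$. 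Together these give the deterministic part $c\,k^{-2\nu(1-\alpha)}+ck^{1-\alpha}\delta^2$, with the factor $n$ bookkept as in the statement.

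\emph{Fluctuation term.} We write $x_{k+1}^\delta-\bar x_{k+1}^\delta=(I-\eta_kB)(x_k^\delta-\bar x_k^\delta)+\eta_k M_k$. Here $M_k=n^{-1}A^*(Ax_k^\delta-y^\delta)-D^{n_x}_{j_k}A_{i_k}^*(A_{i_k}x_k^\delta-y^\delta_{i_k})$ is a martingale difference. Orthogonality of martingale increments gives
\[
\E\|x_k^\delta-\bar x_k^\delta\|^2=\sum_t\eta_t^2\E\|\Pi_{t+1}^{k-1}M_t\|^2 .
\]
The key step is to bound $\E[\|\Pi M_t\|^2\mid\mathcal F_t]$ by
\[
\frac1n\sum_{i,j}\big\|\Pi D_j^{n_x}A_i^*r_{t,i}\big\|^2\le \frac1n\sum_j\big\|\Pi D^{n_x}_jA^*\big\|^2\,\|r_t\|^2 .
\]
Assumption \ref{ass:A} is used exactly here. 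Since $D_jA^*AD_j\preceq c_jA^*A$, we get $\|\Pi D_jA^*\|^2=\|\Pi D_jA^*AD_j\Pi\|\le c_j\|\Pi A^*A\Pi\|=c_jn\|\Pi B^{1/2}\|^2$. This yields the factor $C_A$ and the smoothing $\|\Pi_{t+1}^{k-1}B^{1/2}\|^2\lesssim(\sum_{s=t+1}^{k}\eta_s)^{-1}$. (Without Assumption \ref{ass:A}, the projection $D_j$ would destroy the smoothing, which is why the hypothesis is needed.) We are then reduced to
\[
\E\|x_k^\delta-\bar x_k^\delta\|^2\le cC_A\sum_{t<k}\eta_t^2\Big(\sum_{s=t+1}^k\eta_s\Big)^{-1}\E\|Ax_t^\delta-y^\delta\|^2 .
\]

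\emph{Main obstacle: residual control.} The term $\E\|Ax_t^\delta-y^\delta\|^2$ must itself be bounded, but it is not monotone. We bound it by $2\|A(\bar x_t^\delta-x^\dag)\|^2+2\delta^2+2\E\|A(x_t^\delta-\bar x_t^\delta)\|^2$. The first piece is $\lesssim n\,t^{-\min(1,2\nu+1)(1-\alpha)}+\delta^2$ by the same spectral estimates. The last piece satisfies an analogous recursion with an extra factor $B$, so it gains one more power of $(\sum\eta)^{-1}$. We argue by induction on $k$, with the hypothesis
\[
\E\|A(x_t^\delta-\bar x_t^\delta)\|^2\le c\,n\,\big(t^{-\min(1,2\nu+1)(1-\alpha)}+ \delta^2 \big)\text{-type bound}.
\]
We close it using the smallness $c_0\le\alpha/2$, which makes the constants in the recursion contract, together with the elementary sum estimates (deferred to Appendix \ref{app:estimate}) $\sum_{t<k}t^{-2\alpha}(k^{1-\alpha}-t^{1-\alpha})^{-1}t^{-\beta}\lesssim k^{-\min(\beta,\ldots)(1-\alpha)}$ and their $\log$-free variants. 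Inserting the result gives a fluctuation contribution of $C_A\,k^{-\min(1,2\nu)(1-\alpha)}+C_Ak^{1-\alpha}\delta^2$. The cap at exponent $1$ comes from the sum $\sum_t\eta_t^2(\cdot)^{-1}t^{-\beta}$ saturating at $k^{-(1-\alpha)}$. Adding the mean and fluctuation bounds yields the theorem. I expect verifying these kernel-sum estimates uniformly in $n$, and closing the induction with $c_0\le\alpha/2$, to be the most delicate part.
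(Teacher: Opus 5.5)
Your architecture is the paper's. $\bar x_k^\delta-x^\dag$ is exactly $\E[e_k^\delta]$, so your split is in fact an exact orthogonal decomposition. Your $M_k$ is the paper's $N_kr_k^\delta$, and your estimate $\|\Pi D_j^{n_x}A^*\|^2\le c_jn\|\Pi B_X^{1/2}\|^2$ is the second part of Lemma \ref{lem:N}. Splitting $\E\|Ax_t^\delta-y^\delta\|^2$ into $\|A\bar x_t^\delta-y^\delta\|^2+\E\|A(x_t^\delta-\bar x_t^\delta)\|^2$ is precisely \eqref{eqn:residual}, the starting point of Lemma \ref{lem:r_k^delta}.

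The step that does not work as you describe it is the residual fluctuation. Your phrase ``extra factor $B$, one more power of $(\sum\eta)^{-1}$'' suggests bounding $\E[\|A\Pi N_sr_s^\delta\|^2\mid\mathcal F_s]$ by the ``analogous'' Assumption~\ref{ass:A} estimate with $R=A\Pi$. The kernel then becomes $nC_A\eta_s^2\|\Pi_{s+1}^{t-1}B_X\|^2$. For $s\le t/2$ the only available bound is $nC_A\eta_s^2(e\sum_{r>s}\eta_r)^{-2}$, in which $c_0$ cancels while the prefactor $nC_A$ remains. So $c_0\le\alpha/2$ does not make this recursion contract. The induction could then only be closed beyond an $n$- and $C_A$-dependent threshold, which destroys the $n$-independence of $c_*,c_{**}$.

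The paper does not use Assumption~\ref{ass:A} for the residual at all. It applies the crude bound $\E[\|RN_sr_s^\delta\|^2\mid\mathcal F_s]\le n^{-1}L\|R\|^2\|r_s^\delta\|^2$ (first part of Lemma \ref{lem:N}) with $R=A\Pi$. The smoothing then comes from the outer $A$ alone: $n^{-1}L\|A\Pi\|^2=L\|\Pi B_X^{1/2}\|^2\le L(2e\sum_{r>s}\eta_r)^{-1}$. The resulting kernel $L\eta_s^2(2e\sum_{r>s}\eta_r)^{-1}$ is proportional to $c_0$. This is exactly why $c_0\le\alpha/2$ closes the induction and gives $\E\|r_t^\delta\|^2\le nc_1^*t^{-(1-\alpha)}+nc_2^*\delta^2$ with $n$-independent constants. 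It is also the single-power kernel in the sum estimate you quote, so you only need to feed that estimate the right operator bound.

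With this residual bound, the fluctuation part of the error is of order $nC_Ak^{-(1-\alpha)}$ (plus a $\delta^2$ term), independently of $\nu$. The exponent $\min(1,2\nu)$ appears only after you add the bias term $k^{-2\nu(1-\alpha)}$, not from the fluctuation alone as your last paragraph suggests.
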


The next corollary follows directly from Theorem \ref{thm:err-total} and provides an order-optimal convergence rate of DSBGD for low-regularity solution $x^\dag$, under suitable \textit{a priori} stopping rules. 
The shorthand notation $k(\delta)=\mathcal{O}(\delta^{-s})$ means that there exist constants $0<C_1<C_2$ and some $\delta_0>0$, such that for any $\delta \in (0 ,\delta_0]$, there holds
$C_1\delta^{-s} \leq k(\delta) \leq C_2\delta ^{- s}.$
\begin{corollary}\label{cor:rate}
Let Assumptions \ref{ass:stepsize}(iii), \ref{ass:source} and \ref{ass:A} be fulfilled with $\nu\in(0,\frac12]$, then there holds
\begin{align*}
\E[\|x_{k(\delta)}^\delta-x^\dag\|^2]^\frac12\leq& \mathcal{O}(\delta^{\frac{2\nu}{1+2\nu}}),\quad \forall k(\delta)=\mathcal{O}(\delta^{-\frac{2}{(1+2\nu)(1-\alpha)}}).
\end{align*}  
\end{corollary}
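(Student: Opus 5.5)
The corollary follows directly from Theorem \ref{thm:err-total}, specialized to $\nu\in(0,\frac12]$, by balancing its two error terms. Corollary \ref{cor:rate} is stated without repeating the constraint $c_0\le\alpha/2$, so I will take it as inherited from Theorem \ref{thm:err-total}. The constants $n$ and $C_A$ are fixed (Assumption \ref{ass:A} gives $C_A<\infty$), and $c_*,c_{**}$ do not depend on $k$ or $\delta$. For $\nu\le\frac12$ we have $\min(1,2\nu)=2\nu$, so the bound reads
\begin{align*}
\E[\|x_k^\delta-x^\dag\|^2]\le n c_* C_A k^{-2\nu(1-\alpha)} + n c_{**} C_A k^{1-\alpha}\delta^2 .
\end{align*}

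Next I evaluate this at $k=k(\delta)$ with $C_1\delta^{-s}\le k(\delta)\le C_2\delta^{-s}$ and $s=\frac{2}{(1+2\nu)(1-\alpha)}$, for $\delta\in(0,\delta_0]$. The first term is decreasing in $k$, so I use the lower bound on $k(\delta)$:
\begin{align*}
k(\delta)^{-2\nu(1-\alpha)}\le C_1^{-2\nu(1-\alpha)}\delta^{2\nu(1-\alpha)s}=C_1^{-2\nu(1-\alpha)}\delta^{\frac{4\nu}{1+2\nu}}.
\end{align*}
The second term is increasing in $k$, so I use the upper bound:
\begin{align*}
k(\delta)^{1-\alpha}\delta^2\le C_2^{1-\alpha}\delta^{2-\frac{2}{1+2\nu}}=C_2^{1-\alpha}\delta^{\frac{4\nu}{1+2\nu}}.
\end{align*}
The exponent $s$ is chosen precisely so that these two powers of $\delta$ coincide. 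Adding the two bounds gives
\begin{align*}
\E[\|x_{k(\delta)}^\delta-x^\dag\|^2]\le nC_A\big(c_*C_1^{-2\nu(1-\alpha)}+c_{**}C_2^{1-\alpha}\big)\delta^{\frac{4\nu}{1+2\nu}}.
\end{align*}
Taking square roots yields the rate $\mathcal{O}(\delta^{\frac{2\nu}{1+2\nu}})$.

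There is no real obstacle here. The only point needing care is to pair each monotone term with the correct endpoint of the two-sided bound on $k(\delta)$, and to check the exponent arithmetic. This confirms that the rate matches the order-optimal rate of the Landweber method for $\nu\le\frac12$.
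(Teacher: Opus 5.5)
Your proposal is correct and is the argument the paper intends when it says the corollary follows directly from Theorem \ref{thm:err-total}: with $\min(1,2\nu)=2\nu$, you pair the decreasing term with the lower bound $C_1\delta^{-s}$ and the increasing term with the upper bound $C_2\delta^{-s}$, so both balance at $\delta^{\frac{4\nu}{1+2\nu}}$. You are also right that the condition $c_0\le\alpha/2$ must be carried over from the theorem, since the corollary's statement omits it.
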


\begin{remark}\label{rem:rate}
Note that the convergence rate $\mathcal{O}(\delta^{\frac{2\nu}{1+2\nu}}) $ stated in Corollary \ref{cor:rate} is identical with that for the Landweber method \cite{EnglHankeNeubauer:1996}, which is known to be order-optimal. 
Therefore, under suitable assumptions, DSBGD defined in \eqref{eqn:DSBGD} is also optimal with \textit{a priori} stopping rules when the solution regularity index $\nu\in(0,\frac12]$. 
\end{remark}

\section{Regularizing property}\label{sec:conv}    

In this section, we analyze the convergence of the DSBGD iterate defined in \eqref{eqn:DSBGD} for exact and noisy data in Theorems \ref{thm:conv-exact} and \ref{thm:conv-noisy} respectively. 
We first give several shorthand notation. We denote the DSBGD iterates for the exact data $y^\dag$ and noisy data $y^\delta$ by $x_k$ and $x_k^\delta$, respectively. 
For any $k=1,2,\cdots$, we define
\begin{align*}
&e_{k}=x_{k}-x^\dag,
\quad 
e_{k}^\delta=x_{k}^\delta-x^\dag,
\quad \Delta_{k}^\delta = x_{k}^\delta -x_{k},\\
&r_{k}=Ax_{k}-y^\delta,
\quad 
r_{k}^\delta=Ax_{k}^\delta-y^\delta, \quad d_{k}^\delta = r_{k}^\delta - r_{k},\\
&n=n_x n_y, \quad N_k= n^{-1}A^*- D_{j_k}^{n_x} A^* D_{i_k}^{n_y},\\
&B_X = n^{-1} A^* A: X\rightarrow X,\quad P^X_k = I_X-\eta_k B_X,\\
&B_Y = n^{-1} A A^*: Y\rightarrow Y, \quad P^Y_k = I_Y-\eta_k B_Y.
\end{align*}
We also define a constant \begin{equation}\label{eqn:tilde_L}
\tilde{L}=
\begin{cases}
    L ,& \mbox{under Assumption \ref{ass:stepsize}(i)},\\
    \min_{1\leq i\leq n_y}{\|A_i\|^2} ,&\mbox{under Assumption \ref{ass:stepsize}(ii)}.
\end{cases}
\end{equation} 
Without loss of generality, we assume $\tilde{L}>0$. 
Otherwise, if there exists a zero operator $A_t$, i.e., $\|A_t\|=0$, we can combine $A_t$ with another nonzero operator $A_{t'}$ and form a new nonzero linear operator $(A_t,A_{t'})^\top$. 
This yields a reformulation of the problem \eqref{eqn:lininv} with $n_y-1$ bounded linear operators $A_i$. 
For convenience in analysis, we reformulate the DSBGD iterate defined in \eqref{eqn:DSBGD} into an equivalent form:
\begin{align}\label{eqn:DSBGD2}
&x_{k+1}^\delta= x_k^\delta - \eta_k D_{j_k}^{n_x} A^* D_{i_k}^{n_y}(Ax_k^\delta - y^\delta), \\
\mbox{where}\quad &D_{j}^{n_x}:={\rm diag}(O_{X_1},\cdots,O_{X_{j-1}},I_{X_j},O_{X_{j-1}},\cdots,O_{X_{n_x}}),\nonumber\\
&D_{i}^{n_y}:={\rm diag}(O_{Y_1},\cdots,O_{Y_{i-1}},I_{Y_i},O_{Y_{i-1}},\cdots,O_{Y_{n_y}}),\nonumber
\end{align}
with $O_{Z}: Z \rightarrow Z$ and $I_{Z}: Z \rightarrow Z$ being the zero operator and the identity operator, respectively. 
Note that both $D_{j}^{n_x}: X \rightarrow {\bf 0}\times\cdots\times{\bf 0}\times X_j\times {\bf 0}\times\cdots\times{\bf 0}$ and $D_{i}^{n_y}: Y \rightarrow {\bf 0}\times\cdots\times{\bf 0}\times Y_i\times {\bf 0}\times\cdots\times{\bf 0}$ are orthogonal projectors, and there holds
\begin{equation}\label{eqn:D}
(D_{i}^{n_y})^*D_{i}^{n_y}=D_{i}^{n_y},\quad (D_{j}^{n_x})^*D_{j}^{n_x}=D_{j}^{n_x},\quad \E[D_{i}^{n_y}]=n_y^{-1}I_Y \quad \mbox{and} \quad \E[D_{j}^{n_x}]=n_x^{-1}I_X.
\end{equation}
Indeed, let $b_{j}=(O_{X_1},\cdots,O_{X_{j-1}},I_{X_j},O_{X_{j-1}},\cdots,O_{X_{n_x}})$, by the definitions of $D_{j}^{n_x}$, $D_{i}^{n_y}$, $A_i$, $A_{i,j}$ and
$x_{k+1}^\delta$ in \eqref{eqn:DSBGD},  we have
\begin{align*}
x_{k+1}^\delta=x_{k}^\delta - \eta_kA_{i_k,j_k}^*(A{x_k^\delta}-y^\delta)_{i_k}b_{j_k}= x_k^\delta - \eta_k D_{j_k}^{n_x} A^* D_{i_k}^{n_y}(Ax_k^\delta - y^\delta).
\end{align*}
Below we analyze the convergence of DSBGD iterate for exact and noisy data separately.

\subsection{Convergence for exact data}

We start by showing the convergence of the proposed DSBGD iterates to the exact solution $x^\dag$ for exact data $y^\dag$. 
To this end, we prove that both $\{\E[\|e_{k}\|^2]\}_{k\geq 1}$ and $\{x_k\}_{k\geq 1}$ are Cauchy sequences, and that the mean squared residual $\E[\|r_k\|^2]$ vanishes as $k\to\infty$ in the next two lemmas, i.e., Lemmas \ref{lem:mono} and \ref{lem:Cauchy}. 
The following lemma gives a crucial monotonicity result of the mean squared error $\E[\|e_{k}\|^2]$, which implies that $\{\E[\|e_{k}\|^2]\}_{k\geq 1}$ is a Cauchy sequence.

\begin{lemma}\label{lem:mono}
Let the stepsize $\eta_k =\mu_k L^{-1}$ for any $k\geq 1$ or $\eta_k =\mu_k \|A_{i_k}\|^{-2}$ for any $k\geq 1$, where $0<\mu_k\leq c_0<2$. 
Then for the DSBGD iterate $x_k$ defined in \eqref{eqn:DSBGD2} with exact data $y^\dag$,
the error $\{\E[\|e_{k}\|^2]\}_{k\geq 1}$ is monotonically decreasing and is a Cauchy sequence that converges to some constant $C_e\geq 0$ and 
$\lim_{k\to\infty}\sum_{t=k}^\infty{\mu_t}\E[\|r_t \|^2] = 0$.
\end{lemma}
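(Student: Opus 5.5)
We expand the squared error one step. Since $y^\dag=Ax^\dag$, the exact-data iteration \eqref{eqn:DSBGD2} gives $e_{k+1}=e_k-\eta_k D_{j_k}^{n_x}A^*D_{i_k}^{n_y}Ae_k$. Here $r_k=Ax_k-y^\dag=Ae_k$ in the exact-data case. Write $g_k:=A^*D_{i_k}^{n_y}r_k=A_{i_k}^*r_{k,i_k}$, which lies in $X$, and $\rho_k:=D_{i_k}^{n_y}r_k$. Then
\begin{align*}
\|e_{k+1}\|^2=\|e_k\|^2-2\eta_k\langle D_{j_k}^{n_x}g_k,e_k\rangle+\eta_k^2\|D_{j_k}^{n_x}g_k\|^2 .
\end{align*}
The key structural point is that $\eta_k$ depends at most on $i_k$, and not on $j_k$. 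We therefore first take the conditional expectation over $j_k$, given $\mathcal{F}_k$ and $i_k$, using $\E[D_j^{n_x}]=n_x^{-1}I_X$ from \eqref{eqn:D}. The cross term becomes $-2\eta_k n_x^{-1}\langle g_k,e_k\rangle=-2\eta_k n_x^{-1}\|\rho_k\|^2$. The quadratic term becomes $\eta_k^2 n_x^{-1}\|g_k\|^2$.

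Next we bound $\|g_k\|^2=\|A_{i_k}^*\rho_{k,i_k}\|^2\le \|A_{i_k}\|^2\|\rho_k\|^2$. Under either stepsize choice we have $\eta_k\|A_{i_k}\|^2\le\mu_k$. Indeed, in case (i), $\eta_k\|A_{i_k}\|^2=\mu_kL^{-1}\|A_{i_k}\|^2\le\mu_k$, and in case (ii) it equals $\mu_k$. Hence
\begin{align*}
\E_{j_k}[\|e_{k+1}\|^2]\le \|e_k\|^2-n_x^{-1}\eta_k(2-\mu_k)\|\rho_k\|^2\le \|e_k\|^2-n_x^{-1}(2-c_0)\eta_k\|\rho_k\|^2 .
\end{align*}

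Now we take the expectation over $i_k$. We use $\eta_k\ge \mu_k L^{-1}$ in both cases, because $\|A_{i_k}\|^{-2}\ge L^{-1}$. Together with $\E_{i_k}[\|D_{i_k}^{n_y}r_k\|^2]=n_y^{-1}\|r_k\|^2$, this yields
\begin{align*}
\E[\|e_{k+1}\|^2]\le \E[\|e_k\|^2]-\frac{(2-c_0)\mu_k}{nL}\E[\|r_k\|^2].
\end{align*}
In case (ii) we bound $\eta_k\|\rho_k\|^2\ge\mu_kL^{-1}\|\rho_k\|^2$ pointwise before averaging over $i_k$. This circumvents the correlation between $\eta_k$ and $i_k$, which is the only delicate point; everything else is routine.

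From this recursion we conclude as follows. Because $c_0<2$, the sequence $\E[\|e_k\|^2]$ is nonnegative and nonincreasing, so it converges to some $C_e\ge0$; in particular it is Cauchy. Summing the inequality from $t=k$ to $m$ and letting $m\to\infty$ gives
\begin{align*}
\frac{2-c_0}{nL}\sum_{t=k}^\infty\mu_t\E[\|r_t\|^2]\le \E[\|e_k\|^2]-C_e .
\end{align*}
The right-hand side tends to $0$ as $k\to\infty$. This gives $\lim_{k\to\infty}\sum_{t=k}^\infty\mu_t\E[\|r_t\|^2]=0$. Equivalently, the full series with $k=1$ converges, being bounded by $nL\|e_1\|^2/(2-c_0)$, and its tails therefore vanish.
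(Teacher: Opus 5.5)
Your proposal is correct and follows the paper's proof essentially step by step: expand the square, average over $j_k$ first (legitimate since $\eta_k$ depends at most on $i_k$), use $\eta_k\|A_{i_k}\|^2\le\mu_k$ and the pointwise bound $\eta_k\ge\mu_kL^{-1}$, then average over $i_k$ to reach $\E[\|e_{k+1}\|^2]\le\E[\|e_k\|^2]-(2-c_0)\mu_k(nL)^{-1}\E[\|r_k\|^2]$. The only cosmetic difference is at the end: you get convergence of $\E[\|e_k\|^2]$ from monotonicity and then read off the vanishing tails, whereas the paper first bounds the full series by $(2-c_0)^{-1}nL\|e_1\|^2$ and then deduces the Cauchy property.
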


\begin{proof}
By the definition of $x_{k+1}$ in \eqref{eqn:DSBGD2} and the identity $Ae_k=r_k$, 
the error $e_{k+1}$ is given by
\begin{equation}\label{eqn:e_k}
e_{k+1} = x_{k+1} -x^\dag= e_k - \eta_k D_{j_k}^{n_x} A^* D_{i_k}^{n_y} r_k.
\end{equation}
Then, with \eqref{eqn:D}, completing the square gives
\begin{align*}
    \| e_{k+1} \|^2 - \| e_k \|^2
    &= \eta_k^2\| D_{j_k}^{n_x} A^* D_{i_k}^{n_y}r_k\|^2 - 2\eta_k\langle e_k^\delta,D_{j_k}^{n_x} A^* D_{i_k}^{n_y} r_k\rangle\\
    &= \eta_k^2\langle D_{j_k}^{n_x} A^* D_{i_k}^{n_y}r_{k}^\delta, A^* D_{i_k}^{n_y}r_{k}^\delta\rangle - 2\eta_k\langle D_{i_k}^{n_y} A D_{j_k}^{n_x} e_{k}, D_{i_k}^{n_y} r_k\rangle.
\end{align*}
Taking conditional expectation $\E_X$ with respect to \(\mathcal{F}_k\), we obtain the following estimate
\begin{align*}
    \E_X[ \| e_{k+1} \|^2 - \| e_k \|^2 | \mathcal{F}_k ]
    &= n_x^{-1} \eta_k^2 \|A^* D_{i_k}^{n_y} r_k \|^2 - 2n_x^{-1} \eta_k\|D_{i_k}^{n_y} r_k \|^2\\
    &\leq n_x^{-1} \eta_k^2 \|A^* D_{i_k}^{n_y}\|^2\|D_{i_k}^{n_y} r_k \|^2 - 2n_x^{-1} \eta_k\|D_{i_k}^{n_y} r_k \|^2\\
    &\leq -n_x^{-1} \eta_k (2 - \mu_k)\|D_{i_k}^{n_y} r_k \|^2,
\end{align*}
where the last inequality follows from the inequalities $\eta_k\|A^* D_{i_k}^{n_y}\|^2=\eta_k\|A_{i_k}\|^2\leq \mu_k$ due to the conditions on $\eta_k$.
Together with the condition $\eta_k\geq L^{-1} \mu_k$, we obtain
\begin{align*}
    \E_X[ \| e_{k+1} \|^2 - \| e_k \|^2 | \mathcal{F}_k ]
    &\leq -n_x^{-1} L^{-1} \mu_k (2 - \mu_k)\|D_{i_k}^{n_y} r_k \|^2.
\end{align*}
Further, taking conditional expectation $\E$ with respect to \(\mathcal{F}_k\) gives
\begin{align*}
\E[ \| e_{k+1} \|^2 - \| e_k \|^2 | \mathcal{F}_k ]=\E\big[\E_X[ \| e_{k+1} \|^2 - \| e_k \|^2 | \mathcal{F}_k ]\,\big|\, \mathcal{F}_k \big]\leq -n^{-1} L^{-1} \mu_k (2 - \mu_k)\| r_k \|^2   
\end{align*}
Finally, taking full expectation on above estimates, with the condition $0<\mu_k \leq c_0 <2$, we obtain
\begin{align*}
\E[\| e_{k+1} \|^2] - \E[\| e_k \|^2]\leq -n^{-1} L^{-1} \mu_k (2 - c_0 )\E[\| r_k \|^2]\leq 0,  
\end{align*}
which implies that $\E[\|e_{k}\|^2]$ is monotonically decreasing. 
Thus, there holds for any $k\geq 1$ that
\begin{align*}
0\leq& \E[\| e_{k+1} \|^2]\leq \E[\| e_k \|^2]-n^{-1}L^{-1} \mu_k  (2 - c_0)\E[\| r_k \|^2]\\
\leq& \|e_1\|^2 - n^{-1}L^{-1} (2-c_0 )\sum_{t=1}^k \mu_t \E[\|r_t\|^2],
\end{align*}
which yields
\begin{align*}
\sum_{t=1}^\infty \mu_t\E[\|r_t\|^2]\leq (2-c_0)^{-1} nL\|e_1\|^2< \infty,
\end{align*}
and hence $\lim_{k\to\infty}\sum_{t=k}^\infty{\mu_t}\E[\|r_t\|^2] = 0$.
It follows that 
\begin{align*}
&\lim_{k<\ell,k\to\infty}\big|\E[\|e_{\ell}\|^2] -\E[\|e_{k}\|^2]\big|\leq\lim_{k<\ell,k\to\infty}\sum_{t=k}^{\ell-1}\big|\E[\|e_{t+1}\|^2] -\E[\|e_{t}\|^2]\big|\\
\leq& n^{-1} L^{-1}(2 - c_0)\lim_{k<\ell,k\to\infty}\sum_{t=k}^{\ell-1} \mu_t \E[\| r_t \|^2]
\leq n^{-1}L^{-1} (2 - c_0)\lim_{k\to\infty}\sum_{t=k}^{\infty} \mu_t \E[\| r_t \|^2]=0,
\end{align*}
which implies that $\{\E[\|e_{k}\|^2]\}_{k\geq 1}$ is a Cauchy sequence. Furthermore, the fact that $\E[\|e_{k}\|^2]\geq 0$ guarantees that $\lim_{k\to \infty}\E[\|e_{k}\|^2]:=C_e\geq 0$.
This completes the proof of the lemma.
\end{proof}

The next result shows that the sequence $\{x_k\}_{k\geq 1}$ is a Cauchy sequence and the mean squared residual $\lim_{k\to\infty}\E[\|r_k\|^2]=0$.

\begin{lemma}\label{lem:Cauchy}
Let the stepsize $\eta_k =\mu_k L^{-1}$ for any $k\geq 1$ or $\eta_k =\mu_k \|A_{i_k}\|^{-2}$ for any $k\geq 1$, where $0<\mu_k\leq c_0<2$. 
Then for exact data $y^\dag$,
the sequence $\{x_k\}_{k\geq 1}$ defined by \eqref{eqn:DSBGD2} is a Cauchy sequence. 
Further, let Assumption \ref{ass:stepsize}(i) or (ii) be fulfilled, and then $\lim_{k\to\infty}\E[\|r_k\|^2]=0$.
\end{lemma}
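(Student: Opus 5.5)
The plan is to adapt the argument for the Landweber method and SGD \cite{JinLu:2019}. Throughout, ``Cauchy'' means Cauchy in mean square: I will show $\E[\|x_k-x_\ell\|^2]=\E[\|e_k-e_\ell\|^2]\to0$ as $k<\ell\to\infty$. Fix $k<\ell$ and pick an intermediate index $m\in[k,\ell]$ with
\begin{equation*}
\E[\|r_m\|^2]\leq \E[\|r_t\|^2]\quad \text{for all } t\in[k,\ell].
\end{equation*}
Then $\E[\|e_k-e_\ell\|^2]\le 2\E[\|e_k-e_m\|^2]+2\E[\|e_\ell-e_m\|^2]$. For each piece I will use the identity
\begin{equation*}
\|e_k-e_m\|^2=\|e_k\|^2-\|e_m\|^2+2\langle e_m-e_k,e_m\rangle .
\end{equation*}
The first difference $\E[\|e_k\|^2]-\E[\|e_m\|^2]$ tends to $0$ by Lemma \ref{lem:mono}, since $\{\E[\|e_k\|^2]\}$ converges to $C_e$. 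The cross term is, by \eqref{eqn:e_k},
\begin{equation*}
\langle e_m-e_k,e_m\rangle=-\sum_{t=k}^{m-1}\eta_t\langle D_{i_t}^{n_y}r_t,\,A D_{j_t}^{n_x}e_m\rangle .
\end{equation*}
It remains to bound each summand by something like $C\mu_t(\E[\|r_t\|^2])^{1/2}(\E[\|r_m\|^2])^{1/2}\le C\mu_t\E[\|r_t\|^2]$. Then the cross term is controlled by the tail $\sum_{t\ge k}\mu_t\E[\|r_t\|^2]$, which vanishes by Lemma \ref{lem:mono}. The piece with $\ell$ is handled symmetrically, with $t$ running over $[m,\ell-1]$.

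The main obstacle is exactly this cross-term bound. In SGD one uses $\|A_{i_t}e_m\|\le\|Ae_m\|=\|r_m\|$. Here the block projection intervenes: $\|AD_{j_t}^{n_x}e_m\|$ is not controlled by $\|r_m\|$. Moreover, $e_m$ depends on $j_t$, so one cannot simply average over $j_t$ first. My first attempt is to split
\begin{equation*}
D_{j_t}^{n_x}e_m=D_{j_t}^{n_x}e_{t+1}+D_{j_t}^{n_x}(e_m-e_{t+1}).
\end{equation*}
For the first part, conditional expectation over $j_t$ (given $\mathcal F_t$ and $i_t$) is legitimate. This reproduces the negative term $-n_x^{-1}\eta_t\|D^{n_y}_{i_t}r_t\|^2$ up to a harmless $\eta_t^2$ correction, exactly as in the proof of Lemma \ref{lem:mono}. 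For the second part I would move to the adjoint side: by self-adjointness of $D_{j_t}^{n_x}$,
\begin{equation*}
\langle D_{i_t}^{n_y}r_t, AD_{j_t}^{n_x}(e_m-e_{t+1})\rangle=\langle D^{n_x}_{j_t}A^*D^{n_y}_{i_t}r_t,\, e_m-e_{t+1}\rangle .
\end{equation*}
The increment $e_m-e_{t+1}$ is itself a sum of later updates $\eta_s D_{j_s}^{n_x}A^*D_{i_s}^{n_y}r_s$, so this term is a double sum $\sum_{t<s}\eta_t\eta_s\langle\cdot,\cdot\rangle$. I would estimate it with Cauchy--Schwarz together with the choice of $m$, via a telescoping rearrangement. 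If that fails, I would instead prove Cauchy-ness of the conditional mean $\E_X$-iterates first, where $\E[D^{n_x}_j]=n_x^{-1}I$ makes the block structure disappear and the SGD argument applies verbatim. I would then control the variance separately by the summability of $\sum_t\mu_t\E[\|r_t\|^2]$.

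For the second claim, Assumption \ref{ass:stepsize}(i) or (ii) gives $\sum_t\mu_t=\infty$. Combined with $\sum_t\mu_t\E[\|r_t\|^2]<\infty$ from Lemma \ref{lem:mono}, this yields $\liminf_{k}\E[\|r_k\|^2]=0$. Since $r_k=Ae_k$ with exact data, we have $\E[\|r_k-r_\ell\|^2]\le\|A\|^2\E[\|e_k-e_\ell\|^2]$, so the first part shows that $r_k$ is Cauchy in $L^2(\Omega;Y)$. Hence $\E[\|r_k\|^2]$ converges, and its limit must equal the $\liminf$, namely $0$.
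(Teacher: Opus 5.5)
Your skeleton is exactly the paper's: the minimizing index $m$, the polarization identity, Lemma \ref{lem:mono} for the differences of norms, and the $\liminf$ argument giving $\E[\|r_k\|^2]\to0$. The piece with $t\in[m,\ell-1]$ is harmless, and it is easier than the other piece rather than symmetric to it: there $e_m$ is $\mathcal F_t$-measurable, so averaging over $(i_t,j_t)$ gives $n^{-1}\E[\langle r_t,r_m\rangle]$ directly.

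The gap is the cross term $\E[\langle e_m-e_k,e_m\rangle]$ with $t\in[k,m-1]$, which you rightly single out but do not prove. Your diagnosis is correct. The paper dispatches this term with ``Similarly'', but averaging over $j_t$ is illegitimate because $e_m$ depends on $j_t$. The pathwise SGD bound also fails, because $\|AD_{j_t}^{n_x}e_m\|\le C\|r_m\|$ needs something like Assumption \ref{ass:A}.

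Neither of your routes closes the gap as described.
In the first route, once $e_m-e_{t+1}=-\sum_{s=t+1}^{m-1}\eta_sg_s$ (with $g_s:=D_{j_s}^{n_x}A^*D_{i_s}^{n_y}r_s$) is expanded, $m$ no longer enters, so its minimality is of no use. Cauchy--Schwarz on the double sum then only yields a bound of order $(nL)^{-1}\big(\sum_{t=k}^{m-1}\mu_t\E[\|r_t\|^2]^{1/2}\big)^2\le (nL)^{-1}\big(\sum_{t=k}^{m-1}\mu_t\big)\sum_{t\ge k}\mu_t\E[\|r_t\|^2]$, and $\sum_{t=k}^{m-1}\mu_t$ is unbounded.
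In the fallback, the $j$-averaged iterates do follow an SGD recursion. However, $e_k-\E_X[e_k]$ is martingale noise propagated through the random operators $I_X-\eta_sn_x^{-1}A^*D_{i_s}^{n_y}A$. Summable noise variances bound its second moment but do not make it Cauchy.

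The missing idea is to split the search direction, not $e_m$, into its conditional mean and a martingale fluctuation. For $\eta_t=\mu_tL^{-1}$ write $g_t=B_Xe_t+\zeta_t$, where $\E[\zeta_t|\mathcal F_t]=0$ and $\E[\|\zeta_t\|^2]\le\E[\|g_t\|^2]\le n^{-1}L\E[\|r_t\|^2]$. Unrolling $e_{s+1}=(I_X-\eta_sB_X)e_s-\eta_s\zeta_s$ gives
\begin{equation*}
e_m=T_te_{t+1}-\sum_{s=t+1}^{m-1}\eta_sT_s\zeta_s,\qquad T_s:=\prod_{u=s+1}^{m-1}(I_X-\eta_uB_X),\qquad \|T_s\|\le1,
\end{equation*}
where the last bound holds because $\eta_u\|B_X\|\le\mu_u/n_x<2$. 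Since the $T_s$ are deterministic, $\E[\langle\zeta_t,T_s\zeta_s\rangle]=0$ for $s>t$ and $\E[\langle\zeta_t,T_t(I_X-\eta_tB_X)e_t\rangle]=0$. Hence
\begin{equation*}
\E[\langle g_t,e_m\rangle]=n^{-1}\E[\langle r_t,r_m\rangle]-\eta_t\E[\langle\zeta_t,T_t\zeta_t\rangle].
\end{equation*}
By the choice of $m$ this gives $|\E[\langle g_t,e_m\rangle]|\le n^{-1}(1+\mu_t)\E[\|r_t\|^2]$. Therefore $|\E[\langle e_m-e_k,e_m\rangle]|\le 3(nL)^{-1}\sum_{t\ge k}\mu_t\E[\|r_t\|^2]\to0$ by Lemma \ref{lem:mono}.

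The point is that only the fluctuation's own one-step contribution correlates with $e_m$; this replaces the pathwise SGD bound. Under Assumption \ref{ass:stepsize}(ii) the same computation works after writing $\eta_tg_t=\mu_t\big(\|A_{i_t}\|^{-2}g_t\big)$ and replacing $B_X$ by $n^{-1}A^*WA$ with $W:=\sum_{i=1}^{n_y}\|A_i\|^{-2}D_i^{n_y}$. This only changes $L$ into $\tilde L$ in the constants.
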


\begin{proof}
The argument below follows closely \cite[Lemma 3.3]{JinZhouZou:2020} and \cite[Proposition 3.2]{ZhouDSGD:2024}. For the convenience of readers, we state similar results here. 
For any $k' \ge k$, let $\ell$ such that $k' \geq \ell\geq k$ and $\E[\|r_\ell\|^2] = \min_{k\le t\le k'}\E[\|r_t\|^2]$.
We claim that 
\begin{equation}\label{eqn:cauchy}
\lim_{k'\geq k, k\to\infty}\E[\|x_{k'}-x_k\|^2]=\lim_{k'\geq k, k\to\infty}\E[\|e_{k'}-e_k\|^2]=0,    
\end{equation}
which implies that the sequence $\{x_k\}_{k\ge 1}$ is a Cauchy sequence. 
In fact, we can bound $\E[\|e_{k'}-e_k\|^2]$ with the Cauchy-Schwarz inequality
\begin{align*}
&\E[\|e_{k'} - e_k\|^2] \le 2\E[\|e_{k'} - e_\ell\|^2] + 2\E[\|e_{l} - e_k\|^2]\\
=& 4\E[\langle e_\ell-e_{k'},e_\ell\rangle]+4\E[\langle e_\ell-e_k,e_\ell\rangle] + 2(\E[\|e_{k'}\|^2] - \E[\|e_\ell\|^2]) + 2(\E[\|e_k\|^2] - \E[\|e_\ell\|^2]).
\end{align*}
Lemma \ref{lem:mono} implies that $$\lim_{k'\geq \ell, \ell\to\infty}(\E[\|e_{k'}\|^2] - \E[\|e_\ell\|^2])=0\; \mbox{  and } \lim_{\ell\geq k, k\to\infty}(\E[\|e_k\|^2] - \E[\|e_\ell\|^2])=0.$$
Hence, it remains to show that $\lim_{\ell\to \infty}\E[\langle e_\ell-e_{k'},e_\ell\rangle]=0$ and $\lim_{k\to \infty}\E[\langle e_\ell-e_k,e_\ell\rangle]=0$.
For the first estimate, with the identity \eqref{eqn:e_k}, we derive
\begin{equation*}
    e_\ell - e_{k'} = \sum_{t=\ell}^{k'-1}\eta_t D_{j_t}^{n_x} A^* D_{i_t}^{n_y} r_t.
\end{equation*}
Then by the triangle inequality and the inequality $\eta_t\leq \mu_t \tilde{L}^{-1}$, with $\tilde{L}$ defined in \eqref{eqn:tilde_L}, under Assumption \ref{ass:stepsize}(i) or (ii), there holds
\begin{align*}
    |\E[\langle e_\ell - e_{k'},e_\ell\rangle]|
    &\leq \tilde{L}^{-1} \sum_{t=\ell}^{k'-1}\mu_t | \E [\langle D_{j_t}^{n_x} A^* D_{i_t}^{n_y} r_t,e_\ell\rangle]|
    = n_x^{-1} \tilde{L}^{-1}\sum_{t=\ell}^{k'-1}\mu_t| \E [\langle A^* D_{i_t}^{n_y} r_t,e_\ell\rangle]|\\
    &= n_x^{-1}\tilde{L}^{-1}\sum_{t=\ell}^{k'-1}\mu_t| \E [\langle D_{i_t}^{n_y} r_t,r_\ell\rangle]|
    = n^{-1}\tilde{L}^{-1}\sum_{t=\ell}^{k'-1}\mu_t| \E [\langle r_t,r_\ell\rangle]|.
\end{align*}
This estimate, the Cauchy-Schwarz inequality, the choice of $\ell$ and Lemma \ref{lem:mono} further yield
\begin{align*}
    n \tilde{L}\lim_{\ell\to \infty}|\E[\langle e_\ell - e_{k'},e_\ell\rangle]|
    &\leq \lim_{\ell\to \infty}\sum_{t=\ell}^{k'-1}\mu_t\E[\|r_t\|^2]^{1/2}\E[\|r_\ell\|^2]^{1/2}
    \leq \lim_{\ell\to \infty}\sum_{t=\ell}^{k'-1}\mu_t\E[\|r_t\|^2]= 0.
\end{align*}
Similarly, we bound the second term $\E[\langle e_\ell-e_k,e_\ell\rangle]$ by
\begin{align*}
    |\E[\langle e_\ell-e_k,e_\ell\rangle]|
    &\leq \tilde{L}^{-1}\sum_{t=k}^{\ell-1}\mu_t| \E [\langle D_{j_t}^{n_x} A^* D_{i_t}^{n_y} r_t,e_\ell\rangle]|
    \leq n^{-1}\tilde{L}^{-1}\sum_{t=k}^{\ell-1}\mu_t\E[\|r_t\|^2].
\end{align*}
Finally, by Lemma \ref{lem:mono}, we have $\lim_{k\to \infty}\E[\langle e_\ell-e_k,e_\ell\rangle]=0$.
Thus, \eqref{eqn:cauchy} holds and the sequences $\{e_k\}_{k\geq 1}$ and $\{x_k\}_{k\geq1}$ are Cauchy sequences.

Now, by the triangle inequality and Cauchy-Schwarz inequality, we bound the difference between the residuals at $k$th and $k'$th iterates by 
\begin{align*}
\lim_{k,k'\to\infty}\big|\E[\|r_{k'}\|^2]^{\frac12} - \E[\|r_{k}\|^2]^{\frac12}\big|
&\le \lim_{k,k'\to\infty}\E[\|r_{k'} - r_{k}\|^2]^{\frac12}
= \lim_{k,k'\to\infty}\E[\|A(e_{k'} - e_{k})\|^2]^{\frac12}\\
&\le \lim_{k,k'\to\infty}\|A\|\E[\|e_{k'} - e_{k}\|^2]^{\frac12} =0.
\end{align*}
Thus, the sequences $\{r_{k}\}_{k\geq 1}$ and $\{\E[\|r_{k}\|^2]\}_{k\geq 1}$ are also Cauchy sequences.
Let $\lim_{k\to\infty}\E[\|r_k\|^2] = \epsilon\geq 0$. 
Assumption \ref{ass:stepsize}(i) or (ii) states $\sum_{t=1}^\infty\mu_t = \infty$, which implies
\begin{align*}
0 = \lim_{k\to\infty}\sum_{t=k}^\infty{\mu_t}\E[\|r_t \|^2] \ge \lim_{k\to\infty}\sum_{t=k}^\infty{\mu_t}\frac{1}{2}\epsilon,    
\end{align*}
and hence $\epsilon=0$.
This completes the proof of the lemma.
\end{proof}

Now, we can state the convergence of the DSBGD iterate defined in \eqref{eqn:DSBGD2} for exact data $y^\dag$.
\begin{theorem}[Convergence for exact data]\label{thm:conv-exact}
Let Assumption \ref{ass:stepsize}(i) or (ii) be fulfilled.
Then for exact data $y^\dag$, DSBGD sequence
$\{x_k\}_{k\geq1}$ defined in \eqref{eqn:DSBGD2} converges to $x^\dag$, i.e., 
\begin{equation*}
  \lim_{k\to\infty}\E[\|e_k\|^2]=\lim_{k\to\infty}\E[\|x_k-x^\dag\|^2] = 0.
\end{equation*}
\end{theorem}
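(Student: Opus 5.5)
By Lemma \ref{lem:Cauchy}, $\{x_k\}_{k\geq1}$ is Cauchy in $L^2(\Omega;X)$, i.e.\ $\lim_{k'\ge k,k\to\infty}\E[\|e_{k'}-e_k\|^2]=0$. Completeness of $L^2(\Omega;X)$ then gives a random element $x^*\in L^2(\Omega;X)$ with $\lim_{k\to\infty}\E[\|x_k-x^*\|^2]=0$. Writing $e^*=x^*-x^\dag$, it suffices to show $e^*=0$ almost surely. I will do this in two steps: $e^*\in\mathcal{N}(A)$ a.s., and $e^*\in\mathcal{N}(A)^\perp$ a.s.

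\emph{Step 1 ($e^*\in\mathcal{N}(A)$).} Since $r_k=Ae_k$ for exact data, boundedness of $A$ gives
$\E[\|Ae^*\|^2]^{1/2}\leq \E[\|r_k\|^2]^{1/2}+\|A\|\E[\|e_k-e^*\|^2]^{1/2}$.
Both terms vanish as $k\to\infty$, by Lemma \ref{lem:Cauchy} and by the $L^2$ convergence above. Hence $Ae^*=0$ a.s.

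\emph{Step 2 ($e^*\in\mathcal{N}(A)^\perp$).} From \eqref{eqn:DSBGD2},
\[
x_k-x_1=-\sum_{t=1}^{k-1}\eta_t D_{j_t}^{n_x}A^*D_{i_t}^{n_y}r_t .
\]
The main obstacle is that $D_{j_t}^{n_x}A^*$ need not map into ${\rm range}(A^*)$. Without an assumption like Assumption \ref{ass:A}, $x_k-x_1$ is therefore not obviously in $\overline{{\rm range}(A^*)}=\mathcal{N}(A)^\perp$. In fact, pathwise it can fail: the increments lie in $\overline{{\rm range}(D_j^{n_x}A^*)}$. I will bypass this pathwise issue by working with expectations. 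Fix $z\in\mathcal{N}(A)$. Using the independence of $(i_t,j_t)$ from $\mathcal{F}_t$ together with \eqref{eqn:D}, I would compute
\[
\E[\langle x_k-x_1,z\rangle]=-\sum_t \E[\eta_t\langle A^*D_{i_t}^{n_y}r_t,D_{j_t}^{n_x}z\rangle].
\]
Conditioning on $j_t$ does not kill this term, because $D_{j_t}^{n_x}z\notin\mathcal{N}(A)$ in general, so this route is also delicate. The cleanest fallback is to show that the \emph{mean} $\E[e_k]$ converges to $0$ and to combine this with a variance argument; still, the null-space component is the crux.

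\emph{Step 3 (conclusion).} If Step 2 succeeds, then $e^*\in\mathcal{N}(A)\cap\mathcal{N}(A)^\perp=\{0\}$ a.s., so $x^*=x^\dag$. Consequently $\lim_{k\to\infty}\E[\|e_k\|^2]=\E[\|e^*\|^2]=0$, which is the statement of Theorem \ref{thm:conv-exact}.

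If Step 2 cannot be closed in general, I would follow the comparison used in \cite{JinLiu:2025}: show that for every $z\in\mathcal{N}(A)$ the quantity $\E[\|x_k-x_1-P_{\mathcal{N}(A)^\perp}(x_k-x_1)\|^2]$ is controlled. In particular, the projection $P_{\mathcal{N}(A)}x_k$ changes only through the terms $\eta_t P_{\mathcal{N}(A)}D_{j_t}^{n_x}A^*D_{i_t}^{n_y}r_t$, whose second moments are summable by Lemma \ref{lem:mono}. This step, controlling the null-space component of the limit, is where I expect the real difficulty of the proof.
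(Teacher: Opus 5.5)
You have located the crux correctly, but Step 2 is a genuine gap that cannot be closed: under Assumption \ref{ass:stepsize}(i) or (ii) alone, the statement is false. Take $n_y=1$, $n_x=2$, $X_1=X_2=Y_1=\mathbb{R}$ and $A_{1,1}=A_{1,2}=1$, so that $Ax=a+b$ for $x=(a,b)$. Let $y^\dag=2$ with initial guess $(0,0)$. Then $x^\dag=(1,1)$, $L=2$, and $\eta_k=\frac12$ (i.e.\ $\mu_k=1$) satisfies both (i) and (ii). Whichever block is updated, the residual obeys $r_{k+1}=\frac12 r_k$, so $r_k=-2^{2-k}$ and the selected coordinate increases by $2^{1-k}$. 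Hence, on every path, $x_k\to(U,2-U)$ with $U=\sum_{k\geq1}\beta_k2^{1-k}$, where $\beta_k=1$ if $j_k=1$ and $\beta_k=0$ otherwise. The variable $U$ is uniform on $[0,2]$, so $\lim_{k\to\infty}\E[\|x_k-x^\dag\|^2]=2\E[(U-1)^2]=\frac23\neq0$. The limit solves $Ax=y^\dag$ but is not the $x_1$-minimum-norm solution. The paper's own proof does not handle this point either: after showing $Ax^*=y^\dag$, it invokes ``uniqueness of this solution''. That is valid only when $\mathcal{N}(A)=\{0\}$, and it is exactly the null-space issue you flagged. The same example rules out your fallbacks. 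Here $\E[e_{k+1}]=P^X_k\E[e_k]$, so $\E[x_k]\to x^\dag$, and the null-space component of $x_k$ does converge, but to the non-degenerate random element $(U-1)(1,-1)$, not to $0$. So neither a mean-plus-variance argument nor summability of the null-space increments can pin the limit down.

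What is missing is a structural condition on $A$, and the range inclusion in Assumption \ref{ass:A} is exactly what makes your Step 2 work. By Douglas' lemma (as in the proof of Lemma \ref{lem:N}), $D_j^{n_x}A^*=A^*T_j$ with $T_j$ bounded. So each increment $\eta_tD_{j_t}^{n_x}A^*D_{i_t}^{n_y}r_t=A^*(\eta_tT_{j_t}D_{i_t}^{n_y}r_t)$ lies in ${\rm range}(A^*)$, and $x_k-x_1\in\mathcal{N}(A)^\perp$ on every path. Since $\mathcal{N}(A)^\perp$ is closed and a subsequence of $x_k$ converges to $x^*$ almost surely, $x^*-x_1\in\mathcal{N}(A)^\perp$ a.s. Together with $x^\dag-x_1\in\mathcal{N}(A)^\perp$ and your Step 1, this gives $x^*-x^\dag\in\mathcal{N}(A)\cap\mathcal{N}(A)^\perp=\{0\}$. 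Be aware also that your Step 1 leans on Lemma \ref{lem:Cauchy}, whose ``similarly'' estimate for the terms with $t<\ell$ needs the same structure. There the cross term is $\eta_t\langle D_{i_t}^{n_y}r_t,AD_{j_t}^{n_x}e_\ell\rangle$ with $e_\ell$ depending on $(i_t,j_t)$, and controlling it by residuals needs an inequality like $\|AD_j^{n_x}e\|^2\leq c_j\|Ae\|^2$, which is the operator form of Assumption \ref{ass:A}.
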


\begin{proof}
By Lemma \ref{lem:Cauchy}, the sequence $\{r_{k}=Ax_k-y^\dag\}_{k\geq 1}$ is a Cauchy sequence that converges to $A x^*-y^\dag$ with $x^*:=\lim_{k\to \infty}x_k$ and $\E[\|A x^*-y^\dag\|]=\lim_{k\to\infty}\E[\|r_k\|^2]=0$, which imply that $x^*$ is a solution to problem \eqref{eqn:lininv}. The uniqueness of this solution completes the proof of the theorem.
\end{proof}

\subsection{Convergence for noisy data}

To establish the regularizing property of DSBGD, i.e., the convergence of DSBGD for noisy data $y^\delta$ at the stopping index $k_*=k(\delta)$, we bound the mean squared error $\E[\|e_{k_*}^\delta\|^2]=\E[\|x_{k_*}^\delta -x^\dag\|^2]$ by
\begin{align}\label{eqn:err_noise}
\E[\|e_{k_*}^\delta\|^2]\leq 2\E[\|e_{k_*}\|^2] + 2\E[\|\Delta_{k_*}^\delta\|^2],   
\end{align}
where the first term $\E[\|e_{k_*}\|^2]$ is estimated in Theorem \ref{thm:conv-exact}.
The following result concerns the second term $\E[\|\Delta_{k_*}^\delta\|^2]=\E[\|x_{k_*}^\delta-x_{k_*}\|^2]$ and gives the pathwise (i.e., along each realization in the filtration $\mathcal{F}_k$) stability of DSBGD iterate $x_k^\delta$ with respect to the noise level $\delta$ at $\delta=0$ for any $k\geq 1$.
\begin{prop}[Stability analysis]\label{prop:stability}
Let the stepsize $\eta_k =\mu_k L^{-1}$ for any $k\geq 1$ or $\eta_k =\mu_k \|A_{i_k}\|^{-2}$ for any $k\geq 1$, where $0<\mu_k\leq c_0<2$.
For any fixed $k\geq 1$ and any path
$(i_1,\ldots,i_{k-1})\in\mathcal{F}_k$, let $x_k$ and $x_k^\delta$ be DSBGD iterates along the path for exact data $y^\dag$ and noisy data $y^\delta$ respectively. Then there hold 
\begin{align*}
\E[\| \Delta_1^\delta \|^2]=0 \quad \mbox{and}\quad \E[\| \Delta_{k+1}^\delta \|^2] \leq n^{-1}\tilde{L}^{-1}\delta^2\sum_{t=1}^k \mu_t(2-\mu_t L)^{-1}.
\end{align*} 
\end{prop}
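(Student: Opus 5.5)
The plan is to derive a one-step recursion for $\Delta_k^\delta=x_k^\delta-x_k$ along a fixed path, expand its squared norm, and control the cross term by a Young inequality tuned so that the $\|A\Delta_k^\delta\|^2$-type contribution cancels exactly. Taking expectations and telescoping then gives the bound. The first claim is trivial: $x_1^\delta=x_1$, so $\Delta_1^\delta=0$.

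\emph{Recursion.} Subtracting the exact-data iteration from the noisy one in \eqref{eqn:DSBGD2}, and using $(Ax_k^\delta-y^\delta)-(Ax_k-y^\dag)=A\Delta_k^\delta-\xi$, gives
\begin{equation*}
\Delta_{k+1}^\delta=\Delta_k^\delta-\eta_k D_{j_k}^{n_x}A^*D_{i_k}^{n_y}(A\Delta_k^\delta-\xi).
\end{equation*}
Set $u=D_{i_k}^{n_y}A\Delta_k^\delta$ and $v=D_{i_k}^{n_y}\xi$. Expanding the square and using \eqref{eqn:D} exactly as in the proof of Lemma \ref{lem:mono}, I would first take $\E_X[\cdot\,|\,\mathcal{F}_k]$, i.e.\ average over $j_k$ with $i_k$ fixed. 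This is the right order because under Assumption \ref{ass:stepsize}(ii) the stepsize $\eta_k$ depends on $i_k$ but not on $j_k$. The averaging gives
\begin{equation*}
\E_X[\|\Delta_{k+1}^\delta\|^2-\|\Delta_k^\delta\|^2\,|\,\mathcal{F}_k]
= n_x^{-1}\big(-2\eta_k\langle u,u-v\rangle+\eta_k^2\|A^*D_{i_k}^{n_y}(u-v)\|^2\big).
\end{equation*}
In both stepsize regimes $\eta_k\|A_{i_k}\|^2\le\mu_k$, so the quadratic term is at most $\eta_k\mu_k\|u-v\|^2$.

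\emph{Key algebraic step.} With $\mu=\mu_k$, the bracket is bounded by
\begin{equation*}
\eta_k\big(-(2-\mu)\|u\|^2+2(1-\mu)\langle u,v\rangle+\mu\|v\|^2\big).
\end{equation*}
Young's inequality $2(1-\mu)\langle u,v\rangle\le(2-\mu)\|u\|^2+\frac{(1-\mu)^2}{2-\mu}\|v\|^2$ cancels the $\|u\|^2$ term completely. The remaining coefficient of $\|v\|^2$ is $\frac{(1-\mu)^2+\mu(2-\mu)}{2-\mu}=\frac{1}{2-\mu}$. Hence
\begin{equation*}
\E_X[\|\Delta_{k+1}^\delta\|^2-\|\Delta_k^\delta\|^2\,|\,\mathcal{F}_k]\le n_x^{-1}\eta_k(2-\mu_k)^{-1}\|D_{i_k}^{n_y}\xi\|^2 .
\end{equation*}
Next bound $\eta_k\le\mu_k\tilde L^{-1}$, with $\tilde L$ as in \eqref{eqn:tilde_L}; this removes the $i_k$-dependence of the stepsize. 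Then average over $i_k$ using $\E[D_{i}^{n_y}]=n_y^{-1}I_Y$, so that $\E\|D_{i_k}^{n_y}\xi\|^2=n_y^{-1}\delta^2$. Taking full expectation and telescoping from $t=1$ to $k$, with $\Delta_1^\delta=0$, yields
\begin{equation*}
\E[\|\Delta_{k+1}^\delta\|^2]\le n^{-1}\tilde L^{-1}\delta^2\sum_{t=1}^k\mu_t(2-\mu_t)^{-1}.
\end{equation*}

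\emph{Comparison with the statement.} The factor this argument produces is $(2-\mu_t)^{-1}$. Written in terms of the stepsize, $\mu_t=\eta_t L$ in regime (i), so $(2-\mu_t)^{-1}=(2-\eta_tL)^{-1}$. I therefore read the factor $(2-\mu_tL)^{-1}$ in the statement as this quantity under the stepsize normalization. Before writing the final version, I would check this normalization to ensure the constant matches the statement exactly.

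\emph{Main obstacle.} The delicate points are the order of conditioning, which forces $\eta_k$ to be bounded only after averaging over $j_k$, and the choice of Young weight that cancels the $\|u\|^2$ term exactly. Without that cancellation no $\|A\Delta_k^\delta\|^2$ term is available to absorb the cross term.
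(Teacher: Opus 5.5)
Your proposal is correct and follows essentially the paper's proof: the same coupled recursion, averaging over $j_k$ first, the bound $\eta_k\|A_{i_k}\|^2\le\mu_k$, and a Young inequality with weight $2-\mu_k$ that leaves exactly $(2-\mu_k)^{-1}\|D_{i_k}^{n_y}\xi\|^2$, followed by $\eta_k\le\mu_k\tilde L^{-1}$ and telescoping. The only difference is that the paper writes the quadratic form in $d_k^\delta=u-v$ and cancels $\|D_{i_k}^{n_y}d_k^\delta\|^2$ rather than $\|u\|^2$, which is just a change of variables; moreover, the paper's own proof also ends with $\sum_{t=1}^k\mu_t(2-\mu_t)^{-1}$ (and uses $(2-c_0)^{-1}$ downstream), so the factor $(2-\mu_tL)^{-1}$ in the statement is a misprint and your bound is the intended one.
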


\begin{proof}
By the definitions $\Delta_k^\delta=x_k^\delta-x_k$ and $d_{k}^\delta = r_{k}^\delta - r_{k} = A \Delta_{k}^\delta - \xi$ and the recursion \eqref{eqn:DSBGD2}, for any fixed path $(i_1,\ldots,i_k)$, there holds
\begin{align*}
\Delta_{k+1}^\delta = \Delta_{k}^\delta - \eta_k D_{j_k}^{n_x} A^* D_{i_k}^{n_y} (A\Delta_{k}^\delta - \xi)=  \Delta_{k}^\delta - \eta_k D_{j_k}^{n_x} A^* D_{i_k}^{n_y} d_{k}^\delta.
\end{align*} 
Then, with \eqref{eqn:D}, completing the square yields 
\begin{align*}
    \| \Delta_{k+1}^\delta \|^2 - \| \Delta_k^\delta \|^2
    & = \eta_k^2\| D_{j_k}^{n_x} A^* D_{i_k}^{n_y}d_{k}^\delta\|^2 - 2\eta_k\langle\Delta_k^\delta,D_{j_k}^{n_x} A^* D_{i_k}^{n_y} d_{k}^\delta\rangle\\
    &= \eta_k^2\langle D_{j_k}^{n_x} A^* D_{i_k}^{n_y}d_{k}^\delta, A^* D_{i_k}^{n_y}d_{k}^\delta\rangle - 2\eta_k\langle D_{i_k}^{n_y} A D_{j_k}^{n_x}\Delta_{k}^\delta, D_{i_k}^{n_y} d_{k}^\delta\rangle.
\end{align*}
Taking conditional expectation $\E_X[\cdot]$ with respect to \(\mathcal{F}_k\), we obtain the following estimate:
\begin{align*}
    \E_X[ \| \Delta_{k+1}^\delta \|^2 - \| \Delta_k^\delta \|^2 | \mathcal{F}_k ]
    &\le n_x^{-1} \eta_k^2\|A^*D_{i_k}^{n_y}\|^2\| D_{i_k}^{n_y} d_k^\delta \|^2 - 2n_x^{-1}\eta_k\langle D_{i_k}^{n_y} A\Delta_{k}^\delta, D_{i_k}^{n_y} d_{k}^\delta\rangle\\
    &\le n_x^{-1} \eta_k \mu_k\| D_{i_k}^{n_y} d_k^\delta \|^2 - 2n_x^{-1}\eta_k\langle D_{i_k}^{n_y}d_{k}^\delta + D_{i_k}^{n_y}\xi, D_{i_k}^{n_y} d_{k}^\delta\rangle\\
    &\le n_x^{-1}\eta_k\big(-(2- \mu_k)\| D_{i_k}^{n_y} d_k^\delta \|^2+ 2\|D_{i_k}^{n_y} d_{k}^\delta\|\| D_{i_k}^{n_y}\xi\|\big)\\
    &\le n_x^{-1}\eta_k(2-\mu_k)^{-1}\|D_{i_k}^{n_y}\xi\|^2,
\end{align*}
where the last inequality is obtained by using Young's inequality $2ab\le \epsilon a^2 + \epsilon^{-1}b^2$ with $a = \| D_{i_k}^{n_y} d_{k}^\delta\|$, $b = \| D_{i_k}^{n_y}\xi\|$ and $\epsilon = 2-\mu_k$. 
Since $\eta_k\leq \mu_k \tilde{L}^{-1}$, we have
\begin{align*}
    \E_X[ \| \Delta_{k+1}^\delta \|^2 - \| \Delta_k^\delta \|^2 | \mathcal{F}_k ]
    &\le n_x^{-1}\tilde{L}^{-1} \mu_k(2-\mu_k)^{-1}\|D_{i_k}^{n_y}\xi\|^2.
\end{align*}
Moreover, we have
\begin{equation*}
\E[ \| \Delta_{k+1}^\delta \|^2 - \| \Delta_k^\delta \|^2 | \mathcal{F}_k]\leq n_x^{-1} \tilde{L}^{-1}\mu_k(2-\mu_k)^{-1}\E[\|D_{i_k}^{n_y}\xi\|^2 ]=n^{-1}\tilde{L}^{-1}\mu_k(2-\mu_k)^{-1}\delta^2.
\end{equation*}

Finally, by taking full expectation on the above estimate, we derive
\begin{align*}
\E[\| \Delta_{k+1}^\delta \|^2] &\leq \E[\| \Delta_k^\delta \|^2]+ n^{-1}\tilde{L}^{-1}\mu_k(2-\mu_k)^{-1}\delta^2\\
&\leq \E[\| \Delta_1^\delta \|^2]+ n^{-1}\tilde{L}^{-1}\delta^2\sum_{t=1}^k \mu_t(2-\mu_t)^{-1}
= n^{-1}\tilde{L}^{-1}\delta^2\sum_{t=1}^k \mu_t(2-\mu_t)^{-1},
\end{align*}
since $\E[\| \Delta_1^\delta \|^2]=\| x_1^\delta-x_1 \|^2=0$.
This completes the proof of the proposition.
\end{proof}

Now, we can give the proof of Theorem \ref{thm:conv-noisy} which gives the regularizing property of DSBGD under \textit{a priori} stopping rules.

\begin{proof}[Proof of Theorem \ref{thm:conv-noisy}]
Let 
$k_*=k(\delta)$ be the stopping index. 
Under Assumption \ref{ass:stepsize}(i) or (ii), the decomposition \eqref{eqn:err_noise} and the estimate in Proposition \ref{prop:stability} give that
\begin{align*}
\E[\|e_{k_*}^\delta\|^2]\leq 2\E[\|e_{k_*}\|^2] + 2n^{-1}\tilde{L}^{-1}(2-c_0)^{-1}\delta^2\sum_{t=1}^{{k_*-1}} \mu_t.
\end{align*}
Then, by Theorem \ref{thm:conv-exact}, when $\lim_{\delta\to0^+}k_*=\infty$ and $\lim_{\delta\to0^+}\delta^2\sum_{t=1}^{{k_*}} \mu_t = 0$, we have $\lim_{\delta\to0^+}\E[\|e_{k_*}^\delta\|^2]= 0$. This completes the proof.
\end{proof}

\subsection{Properties under the discrepancy principle}\label{sec:dp}

In this part, we discuss the behavior of DSBGD iterate when stopped by the discrepancy principle \eqref{eqn:discrepancy_1}, an \textit{a posteriori} stopping rule. 
Without loss of generality, we may assume $k(\delta)\geq 2$.
First, we present a result that guarantees the decay of the mean squared error under rule \eqref{eqn:discrepancy_1} before stopping in the following proposition. 

\begin{prop}[Monotonically decreasing property]\label{prop:decay_dp}
Let the stopping index $k_*=k(\delta)$ be chosen by \eqref{eqn:discrepancy_1} and the stepsize $\eta_k<2(1-\tau^{-1})L^{-1}$ for any $k\geq 1$. 
Then for any $1\leq k<k_*$,
the error $\E[\|e_{k}^\delta\|^2]$, associated with the DSBGD iterate $x_k^\delta$ defined in \eqref{eqn:DSBGD2} for noisy data $y^\delta$, is monotonically decreasing and $
\E[\| e_{k+1}^\delta \|^2 | \mathcal{F}_k ]\leq \| e_k^\delta \|^2 
$.   
\end{prop}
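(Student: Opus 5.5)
The argument repeats the one-step computation of Lemma \ref{lem:mono}, now for noisy data, with the discrepancy principle controlling the noise cross term. By \eqref{eqn:DSBGD2} and the identity $Ae_k^\delta = r_k^\delta + \xi$ (since $Ax^\dag=y^\dag=y^\delta-\xi$), the error recursion is
\begin{equation*}
e_{k+1}^\delta = e_k^\delta - \eta_k D_{j_k}^{n_x}A^*D_{i_k}^{n_y} r_k^\delta .
\end{equation*}
Expanding the square and using \eqref{eqn:D} gives
\begin{equation*}
\|e_{k+1}^\delta\|^2-\|e_k^\delta\|^2=\eta_k^2\langle D_{j_k}^{n_x}A^*D_{i_k}^{n_y}r_k^\delta,A^*D_{i_k}^{n_y}r_k^\delta\rangle-2\eta_k\langle D_{i_k}^{n_y}AD_{j_k}^{n_x}e_k^\delta,D_{i_k}^{n_y}r_k^\delta\rangle .
\end{equation*}

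First take $\E_X[\cdot\,|\,\mathcal{F}_k]$, which replaces $D_{j_k}^{n_x}$ by $n_x^{-1}I_X$. Then take expectation over $i_k$, which replaces $D_{i_k}^{n_y}$ by $n_y^{-1}I_Y$ inside the cross term. Bound the quadratic term by $\eta_k\|A_{i_k}\|^2\le \eta_k L$ times $\|D_{i_k}^{n_y}r_k^\delta\|^2$. Since $\eta_k$ is deterministic under the stated hypothesis, this yields
\begin{equation*}
\E[\|e_{k+1}^\delta\|^2-\|e_k^\delta\|^2\,|\,\mathcal{F}_k]\le n^{-1}\eta_k\big(\eta_kL\|r_k^\delta\|^2-2\langle r_k^\delta+\xi,r_k^\delta\rangle\big)\le n^{-1}\eta_k\|r_k^\delta\|\big((\eta_kL-2)\|r_k^\delta\|+2\delta\big).
\end{equation*}
If $\eta_k$ were allowed to depend on $i_k$, one would instead keep $\eta_k$ inside and use $\eta_k\le 2(1-\tau^{-1})L^{-1}$ pointwise. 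The cross term is the delicate part: $\langle \xi, D_{i_k}^{n_y}r_k^\delta\rangle$ becomes $n_y^{-1}\langle\xi,r_k^\delta\rangle$ only after averaging, and I would then bound it by Cauchy--Schwarz with $\|\xi\|=\delta$.

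The key step uses the discrepancy principle. For $k<k_*$, definition \eqref{eqn:discrepancy_1} gives $\|r_k^\delta\|>\tau\delta$, i.e.\ $\delta<\tau^{-1}\|r_k^\delta\|$. Hence the bracket is at most $\big(\eta_kL-2+2\tau^{-1}\big)\|r_k^\delta\|$, which is negative precisely because $\eta_k<2(1-\tau^{-1})L^{-1}$. This gives $\E[\|e_{k+1}^\delta\|^2|\mathcal{F}_k]\le\|e_k^\delta\|^2$ on the event $\{k<k_*\}$.

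The main obstacle is measurability: $k_*$ is random, so the statement only makes sense when the iteration is regarded as frozen after stopping. The event $\{k<k_*\}=\{\|r_t^\delta\|>\tau\delta,\ t\le k\}$ is $\mathcal{F}_k$-measurable, so the conditional bound holds pathwise on it. On the complement, the stopped iterate does not move and equality holds trivially. Taking full expectation then gives $\E[\|e_{k+1}^\delta\|^2]\le\E[\|e_k^\delta\|^2]$, which is the claimed monotone decrease of the mean squared error before stopping.
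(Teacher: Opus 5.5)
Your proposal is correct and follows the paper's proof essentially step by step: the same error recursion, conditioning first on $j_k$ and then on $i_k$, the bound $\|A_{i_k}\|^2\le L$, Cauchy--Schwarz on the noise cross term, and $\delta<\tau^{-1}\|r_k^\delta\|$ before stopping. The one thing you add is the observation that $\{k<k_*\}$ is $\mathcal{F}_k$-measurable, together with the stopped-process reading of the full-expectation monotonicity; this makes precise what the paper leaves implicit when it conditions on the random event $k<k_*$.
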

\begin{proof}
Similar to the analysis in the proof of Lemma \ref{lem:mono} and Proposition \ref{prop:stability}, 
the error $e_{k+1}^\delta$ is given by
\begin{equation}\label{eqn:e_k^delta}
e_{k+1}^\delta = e_k^\delta - \eta_k D_{j_k}^{n_x} A^* D_{i_k}^{n_y} r_k^\delta,
\end{equation}
and the difference between two successive iterate errors can be bounded by
\begin{align*}
    \E_X[\| e_{k+1}^\delta \|^2 - \| e_k^\delta \|^2 | \mathcal{F}_k ]
    &= \E_X[ \eta_k^2\| D_{j_k}^{n_x} A^* D_{i_k}^{n_y}r_k^\delta\|^2 - 2\eta_k\langle e_k^\delta,D_{j_k}^{n_x} A^* D_{i_k}^{n_y} r_k^\delta\rangle| \mathcal{F}_k ]\\
    &= n_x^{-1}\eta_k^2 \| A^* D_{i_k}^{n_y}r_{k}^\delta\|^2 - 2n_x^{-1}\eta_k\langle D_{i_k}^{n_y} A e_{k}^\delta, D_{i_k}^{n_y} r_k^\delta\rangle\\
    &\leq n_x^{-1}\eta_k^2 L\| D_{i_k}^{n_y}r_{k}^\delta\|^2 - 2n_x^{-1}\eta_k\langle D_{i_k}^{n_y} (r_{k}^\delta+\xi), D_{i_k}^{n_y} r_k^\delta\rangle\\
    &= n_x^{-1}\eta_k\big(-(2-\eta_k L)\| D_{i_k}^{n_y} r_{k}^\delta\|^2- 2\langle D_{i_k}^{n_y} \xi, D_{i_k}^{n_y} r_k^\delta\rangle\big),
\end{align*} 
and hence
\begin{align*}
\E[\| e_{k+1}^\delta \|^2 - \| e_k^\delta \|^2 | \mathcal{F}_k ]
&\leq n^{-1}\eta_k\big(-(2-\eta_k L)\| r_{k}^\delta\|^2- 2\langle \xi, r_k^\delta\rangle\big)\\
&\leq n^{-1}\eta_k\big(-(2-\eta_k L)\| r_{k}^\delta\|^2 +2 \|r_k^\delta\|\delta\big).
\end{align*}
For any $k<k_*$, there holds $\delta<\tau^{-1}\|r_k^\delta\|$, thus
\begin{align*}
\E[\| e_{k+1}^\delta \|^2 - \| e_k^\delta \|^2 | \mathcal{F}_k ]
&\leq -n^{-1}\eta_k(2-\eta_k L-2 \tau^{-1})\| r_{k}^\delta\|^2.
\end{align*}
Finally the condition $\eta_k<2(1-\tau^{-1})L^{-1}$ implies 
\begin{align*}
\E[\| e_{k+1}^\delta \|^2 | \mathcal{F}_k ] - \| e_k^\delta \|^2 
\leq 0 \quad \mbox{and}\quad \E[\| e_{k+1}^\delta \|^2] - \E[\| e_k^\delta \|^2] 
\leq 0,
\end{align*}
which complete the proof of the proposition.
\end{proof}

To establish the finite-iteration termination property of DSBGD equipped with the discrepancy principle \eqref{eqn:discrepancy_1}, we first derive the convergence rates of the mean squared residual $\E[\|r_{k}^\delta\|^2]=\|\E[r_{k+1}^\delta]\|^2+\E[\|r_{k}^\delta-\E[r_{k}^\delta]\|^2]$, its variance component $\E[\|r_{k}^\delta-\E[r_{k}^\delta]\|^2]$, and the mean residual $\|\E[r_{k+1}^\delta]\|$; see Appendix \ref{app:lem:r_k^delta} for the proof.

\begin{lemma}\label{lem:r_k^delta}
Let Assumption \ref{ass:stepsize}(iii) be fulfilled with $c_0 \leq \alpha /2$, then the residual
$r_k^\delta$ satisfies
\begin{align}
  \E[\|r_k^\delta\|^2] & \leq n c_1^* k^{-(1-\alpha)}+n c_2^* \delta^2,\label{eqn:r_k^delta}\\
  \|\E[r_k^\delta]\|&\leq \sqrt{\tfrac{nL}{2^\alpha e c_0}}\|e_1^\delta\|k^{-\frac{1-\alpha}{2}}+\delta, \label{eqn:E[r_k^delta]}\\
  \E[\|r_k^\delta-\E[r_k^\delta]\|^2]&\leq n c_1^* k^{-(1-\alpha)}+ n c_2^* k^{-\frac{\alpha}{2}}\delta^2, \quad \mbox{when} \quad \alpha\in(0,\frac23)\label{eqn:r_k^delta_var}
\end{align}
where the constants $c_1^*$ and $c_2^*$ depend on $L$, $\alpha$ and $c_0$, but are independent of $n$, $k$ and $\delta$.
\end{lemma}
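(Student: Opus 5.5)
We split the residual into its mean and its fluctuation: $\E[\|r_k^\delta\|^2]=\|\E[r_k^\delta]\|^2+\E[\|r_k^\delta-\E[r_k^\delta]\|^2]$. With $e_k^\delta = x_k^\delta - x^\dag$, we have $r_k^\delta = Ae_k^\delta-\xi$. The error recursion \eqref{eqn:e_k^delta} can be written as
\begin{equation*}
e_{k+1}^\delta = P_k^X e_k^\delta + \eta_k n^{-1}A^*\xi + \eta_k N_k r_k^\delta,
\end{equation*}
where $N_k = n^{-1}A^*-D_{j_k}^{n_x}A^*D_{i_k}^{n_y}$ satisfies $\E[N_k\mid\mathcal F_k]=0$, by \eqref{eqn:D} and the independence of $i_k$ and $j_k$.

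\emph{Mean residual.} Taking expectations gives the deterministic Landweber-type recursion $\E[e_{k+1}^\delta]=P_k^X\E[e_k^\delta]+\eta_k n^{-1}A^*\xi$. Applying $A$ yields
\begin{equation*}
\E[r_{k+1}^\delta]=P_k^Y\E[r_k^\delta].
\end{equation*}
Indeed, $A e_{k+1}-\xi = (I-\eta_k B_Y)(Ae_k-\xi)$ since $\eta_k n^{-1}AA^*\xi-\xi=-\xi+\eta_kB_Y\xi$. Iterating,
\begin{equation*}
\E[r_k^\delta]=\prod_{t=1}^{k-1}P_t^Y\,(Ae_1^\delta-\xi).
\end{equation*}
Each $P_t^Y$ is a contraction because $\eta_t\|B_Y\|\le c_0 L^{-1} n^{-1}\|A\|^2\le c_0<2$. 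Using $\|A\|^2\le n_yL$, this gives $\|\prod P_t^Y\xi\|\le\delta$. For the other term we use the spectral estimate
\begin{equation*}
\Big\|\prod_{t=1}^{k-1}(I-\eta_tB_Y)B_Y^{1/2}\Big\|\le \sup_\lambda \lambda^{1/2}e^{-\lambda\sum_t\eta_t}\le (2e\textstyle\sum_{t<k}\eta_t)^{-1/2},
\end{equation*}
with $\sum_{t=1}^{k-1}\eta_t\ge c_0L^{-1}2^{-\alpha}k^{1-\alpha}$ (elementary integral bounds for $k\ge2$). Writing $Ae_1^\delta=\sqrt n B_Y^{1/2}U e_1^\delta$ for a partial isometry $U$ then gives \eqref{eqn:E[r_k^delta]}.

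\emph{Variance.} Set $\epsilon_k=e_k^\delta-\E[e_k^\delta]$. Then $\epsilon_{k+1}=P_k^X\epsilon_k+\eta_kN_kr_k^\delta$, and hence
\begin{equation*}
A\epsilon_k=\sum_{t=1}^{k-1}\eta_t A\Pi_{t+1}^{k-1}N_tr_t^\delta, \qquad \Pi_{t+1}^{k-1}=\prod_{s=t+1}^{k-1}P_s^X .
\end{equation*}
The terms form a martingale-difference sum, so the cross terms vanish in expectation and
\begin{equation*}
\E[\|A\epsilon_k\|^2]=\sum_t\eta_t^2\E\big[\|A\Pi_{t+1}^{k-1}N_tr_t^\delta\|^2\big].
\end{equation*}
The conditional second moment is bounded by that of $D_{j_t}^{n_x}A^*D_{i_t}^{n_y}r_t^\delta$. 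This is the main technical point; since Assumption \ref{ass:A} is not assumed here, we avoid it. We bound $\|A\Pi D_j^{n_x}A^*D_i^{n_y}r\|\le\|A\Pi\|\,\|A_i^*\|\,\|D_i^{n_y}r\|$ with $\|A\Pi_{t+1}^{k-1}\|^2\le n\sup_\lambda\lambda e^{-2\lambda\sum_{s=t+1}^{k-1}\eta_s}\lesssim n(\sum_{s=t+1}^{k-1}\eta_s)^{-1}$, and also use the trivial bound $\le n\|B_X\|\lesssim nL$ when $t$ is close to $k$. The averages over $i_t$ and $j_t$ give factors $n_y^{-1}$ and $n_x^{-1}$, which leaves
\begin{equation*}
\E[\|A\epsilon_k\|^2]\lesssim \sum_{t=1}^{k-1}\eta_t^2 L\min\Big(L,\big(\textstyle\sum_{s=t+1}^{k-1}\eta_s\big)^{-1}\Big)\E[\|r_t^\delta\|^2].
\end{equation*}

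\emph{Closing the recursion.} Insert the ansatz $\E[\|r_t^\delta\|^2]\le nc_1^*t^{-(1-\alpha)}+nc_2^*\delta^2$, in the form $\|\E r_t^\delta\|^2+\E\|A\epsilon_t\|^2$. The claim then reduces to standard kernel sums of the type in \cite{JinLu:2019,JinZhouZou:2020}:
\begin{equation*}
\sum_{t=1}^{k-1}t^{-2\alpha}\big((k-t)k^{-\alpha}\big)^{-1}t^{-\beta}\lesssim k^{-\min(\beta,\alpha)}\ln k .
\end{equation*}
More carefully, one splits at $t=k/2$ and uses $\sum_{s=t+1}^{k-1}\eta_s\ge c(k-t)k^{-\alpha}$. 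The requirement $c_0\le\alpha/2$ is what lets the constant in front of the recursive term be $\le 1/2$, so the induction closes; this step, choosing $c_1^*,c_2^*$ consistently, is the main obstacle. For the $\delta^2$ part of the variance, the weight sum $\sum_t\eta_t^2\min(\cdot)$ behaves like $k^{-\alpha}\ln k$. Under $\alpha<2/3$ it can be bounded by $k^{-\alpha/2}$, which gives \eqref{eqn:r_k^delta_var}. Summing mean and variance then yields \eqref{eqn:r_k^delta}, since $\delta^2$ from the mean dominates.
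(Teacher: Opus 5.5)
Your architecture is the paper's. It uses the mean/fluctuation split, the deterministic recursion $\E[r_{k+1}^\delta]=P_k^Y\E[r_k^\delta]$, and the martingale-difference expansion: your $A\epsilon_{k+1}$ is exactly the sum $\sum_\ell\eta_\ell\prod_{t=\ell+1}^kP_t^YAN_\ell r_\ell^\delta$ in \eqref{eqn:recursion_r}. It also uses the first bound of Lemma \ref{lem:N} without Assumption \ref{ass:A}, the spectral estimate, and an induction on the ansatz.

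The gap is that the step you call ``the main obstacle'' is where the proof actually lives, and the one kernel estimate you state cannot close it. With $\beta=1-\alpha$, your bound $k^{-\min(\beta,\alpha)}\ln k$ makes the recursive term of order $c_1^*k^{-\alpha}\ln k$ for $\alpha<\tfrac12$, and $c_1^*k^{-(1-\alpha)}\ln k$ for $\alpha\ge\tfrac12$. Neither is $\le\theta\,c_1^*k^{-(1-\alpha)}$ with a fixed $\theta<1$, so the induction does not propagate. You need the log-free bound
\[
\mathfrak{I}_{2,1}=\sum_{\ell=1}^{k-1}\frac{\eta_\ell^2\,\ell^{-(1-\alpha)}}{\sum_{t=\ell+1}^k\eta_t}\le 2\big(1+(1+2^{1+\alpha}e^{-1})\alpha^{-1}\big)c_0L^{-1}k^{-(1-\alpha)} .
\]
For $\ell\le k/2$, each summand is at most $2c_0L^{-1}k^{-(1-\alpha)}\ell^{-(1+\alpha)}$, and $\sum_\ell\ell^{-(1+\alpha)}\le 1+\alpha^{-1}$ converges, so there is no logarithm. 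For $\ell>k/2$, the sum is at most $2^{2+\alpha}c_0L^{-1}k^{-1}\ln k$, which is absorbed via $k^{-\alpha}\ln k\le(e\alpha)^{-1}$. After the prefactor $L/(2e)$, the coefficient of $nc_1^*k^{-(1-\alpha)}$ is $\tfrac{c_0}{e}\big(1+(1+2^{1+\alpha}e^{-1})\alpha^{-1}\big)<\tfrac2e$. This is where $c_0\le\alpha/2$ is really used: it cancels the $\alpha^{-1}$. The resulting constant is below $\tfrac2e$, not necessarily $\le\tfrac12$, but any fixed constant below $1$ lets you take $c_1^*$ large enough to absorb $2\mathfrak{I}_1$.

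Your account of $\alpha<\tfrac23$ is also inconsistent. The weight $\mathfrak{I}_{2,2}=\sum_{\ell<k}\eta_\ell^2/\sum_{t>\ell}\eta_t$ behaves like $k^{-\alpha}\ln k$ only for $\alpha\le\tfrac12$. For $\alpha>\tfrac12$, the range $\ell\le k/2$ contributes $\asymp c_0L^{-1}k^{-(1-\alpha)}\sum_\ell\ell^{-2\alpha}\asymp k^{-(1-\alpha)}$, which dominates. Requiring $k^{-(1-\alpha)}\lesssim k^{-\alpha/2}$ is what forces $\alpha\le\tfrac23$; with your asymptotics no restriction on $\alpha$ would be needed.

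Relatedly, do not obtain \eqref{eqn:r_k^delta} by adding the mean bound to \eqref{eqn:r_k^delta_var}. That would confine \eqref{eqn:r_k^delta} to $\alpha<\tfrac23$, whereas it is needed for every $\alpha$ (e.g.\ in Theorem \ref{thm:err-total}). Close the induction with the cruder bound \eqref{eqn:I_2_f}, $\mathfrak{I}_2\le\tfrac2e nc_1^*(k+1)^{-(1-\alpha)}+\tfrac12 nc_2^*\delta^2$, which holds for all $\alpha$. Then derive \eqref{eqn:r_k^delta_var} afterwards, as the paper does.

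Finally, there is a slip in the mean estimate. The correct bound is $\sum_{t=1}^{k-1}\eta_t\ge 2^{-(1-\alpha)}c_0L^{-1}k^{1-\alpha}$, from $(k-1)^{1-\alpha}\ge 2^{-(1-\alpha)}k^{1-\alpha}$, not $2^{-\alpha}$. Your version is false at $k=2$ when $\alpha<\tfrac12$. The corrected one gives exactly the constant $nL/(2^\alpha e c_0)$ in the statement.
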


The next result indicates the monotonicity of the mean residual $\|\E[r_{k}^\delta]\|$, which is also necessary for establishing the finite-iteration termination property in Theorem \ref{thm:termination}. 
Note that Assumption \ref{ass:stepsize}(i) is more general than that in (iii) used in Theorem \ref{thm:termination}.

\begin{lemma}\label{lem:mono_res}
Let Assumption \ref{ass:stepsize}(i) be fulfilled. Then for any $k\geq 1$, there holds
$$\|\E[r_{k+1}^\delta]\|\leq\|\E[r_{k}^\delta]\|.$$
\end{lemma}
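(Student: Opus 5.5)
The plan is to derive a deterministic linear recursion for the mean residual $\E[r_k^\delta]$ and then use a spectral argument on $B_Y$. Under Assumption \ref{ass:stepsize}(i) the stepsize $\eta_k=\mu_kL^{-1}$ is deterministic, independent of the random indices, and this is what makes the recursion possible.

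First, I compute the conditional expectation of one step. Applying $A$ to the recursion \eqref{eqn:DSBGD2} and using $r_k^\delta=Ax_k^\delta-y^\delta$ gives
\begin{equation*}
r_{k+1}^\delta = r_k^\delta - \eta_k A D_{j_k}^{n_x} A^* D_{i_k}^{n_y} r_k^\delta .
\end{equation*}
The indices $i_k$ and $j_k$ are independent of each other and of $\mathcal{F}_k$, and $r_k^\delta$ is $\mathcal{F}_k$-measurable. So by \eqref{eqn:D} we have $\E[D_{j_k}^{n_x}]=n_x^{-1}I_X$ and $\E[D_{i_k}^{n_y}]=n_y^{-1}I_Y$, which yields
\begin{equation*}
\E[r_{k+1}^\delta\,|\,\mathcal{F}_k]=r_k^\delta-\eta_k n^{-1}AA^* r_k^\delta = P_k^Y r_k^\delta .
\end{equation*}
Since $P_k^Y$ is a deterministic bounded operator, taking full expectation gives $\E[r_{k+1}^\delta]=P_k^Y\E[r_k^\delta]$.

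Next, I bound the operator norm of $P_k^Y=I_Y-\eta_kB_Y$. The operator $B_Y=n^{-1}AA^*$ is self-adjoint and positive semidefinite, so the spectrum of $P_k^Y$ lies in $[1-\eta_k\|B_Y\|,1]$. I therefore need $\eta_k\|B_Y\|\le 2$. We have $\|B_Y\|=n^{-1}\|A\|^2$ and $\|A\|^2=\|A^*A\|\le\sum_{i=1}^{n_y}\|A_i\|^2\le n_yL$. Hence $\eta_k\|B_Y\|\le \mu_k L^{-1}n^{-1}n_yL=\mu_k/n_x\le c_0<2$. This shows $\|P_k^Y\|\le1$, and therefore $\|\E[r_{k+1}^\delta]\|\le\|\E[r_k^\delta]\|$.

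The main point to check is the interchange of expectation with the operators and the independence structure; this is routine since everything is bounded and linear. The spectral bound is not delicate, because the factor $n_x^{-1}$ gives extra room.
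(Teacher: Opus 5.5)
Your proposal is correct and takes the same approach as the paper. Both derive $\E[r_{k+1}^\delta]=P_k^Y\E[r_k^\delta]$: the paper via $\E[N_k r_k^\delta|\mathcal{F}_k]=0$ in the recursion $r_{k+1}^\delta=P_k^Y r_k^\delta+\eta_k A N_k r_k^\delta$, you by computing the conditional expectation directly, which amounts to the same thing. Both then conclude from $\|P_k^Y\|\le 1$, and your explicit check $\eta_k\|B_Y\|\le \mu_k/n_x\le c_0<2$ supplies a step the paper only asserts.
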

\begin{proof}
By the fact $\E[N_k r_k^\delta|\mathcal{F}_k]=0$, taking full expectation on the recursion \eqref{eqn:recursion_r_0} gives
\begin{align*}
\E[r_{k+1}^\delta]
= P^Y_k \E[r_k^\delta].
\end{align*}
Then Assumption \ref{ass:stepsize}(i) implies $\|P^Y_k\|\leq 1$ and hence $\|\E[r_{k+1}^\delta]\|\leq \|\E[r_k^\delta]\|$.
This completes the proof of the lemma.
\end{proof}

Now, we can give the proof of Theorem \ref{thm:termination} which gives the finite-iteration termination property of DSBGD under rule \eqref{eqn:discrepancy_1}.
\begin{proof}[Proof of Theorem \ref{thm:termination}]
We decompose the residual $\|r^\delta_{k}\|$ by the triangle inequality as $$\|r^\delta_{k}\|\leq \|\E[r^\delta_{k}]\|+\|r^\delta_{k}-\E[r^\delta_{k}]\|,$$
which implies
\begin{align}\label{eqn:prob1}
\Prob\Big(\|r^\delta_{k}\|\leq \tau \delta\Big)
&\geq
\Prob\Big(\|\E[r^\delta_{k}]\|\leq \frac12(\tau+1) \delta\Big)\Prob\Big(\|r^\delta_{k}-\E[r^\delta_{k}]\|\leq \frac12(\tau-1) \delta\Big).
\end{align}
Following the analysis in the proof of \cite[Theorem 1.1]{JahnJin:2020}, we define 
\begin{align*}
k_0(\delta)=\min\{k\geq 1:\;\|\E[r_{k}^\delta]\|\leq \tfrac12(\tau+1) \delta\}.
\end{align*}
The estimate \eqref{eqn:E[r_k^delta]} gives
\begin{align*}
k_0(\delta)\leq\big[(c_{{\rm stop}}\delta^{-1})^{\frac{2}{1-\alpha}}\big]+1,
\end{align*}
with the constant $c_{{\rm stop}}=\frac{2\sqrt{nL}\|e_1^\delta\|}{\sqrt{2^\alpha e c_0}(\tau-1)}$ and $[\cdot]$ taking the integral part of a real number.
Then, we define \begin{equation}\label{eqn:def_bar_k}
\bar{k}(\delta):=\big[(c_{{\rm stop}}\delta^{-1})^{\frac{4}{2-3\alpha}}\big]+1
\geq k_0(\delta).    
\end{equation}
Thus, the monotonicity of the mean residual in Lemma \ref{lem:mono_res} gives 
\begin{align}\label{eqn:mean_r_bar_k}
\|\E[r_{\bar{k}(\delta)}^\delta]\|\leq \frac12 (\tau+1)\delta.   
\end{align}
By the definition of $k(\delta)$ in the discrepancy principle \eqref{eqn:discrepancy_1}, the event $\|r^\delta_{\bar{k}(\delta)}\|\leq \tau \delta$ implies 
\begin{align*} 
k(\delta)=\min\{k\geq 1:\;\|r^\delta_{k}\|\leq \tau \delta\}\leq \bar{k}(\delta),
\end{align*}
and hence
\begin{align*}
\Prob\Big(k(\delta)\leq \bar{k}(\delta)\Big)\geq \Prob\Big(\|r^\delta_{\bar{k}(\delta)}\|\leq \tau \delta\Big).
\end{align*}
Further, the estimates \eqref{eqn:prob1} and \eqref{eqn:mean_r_bar_k} yield
\begin{align*}
\Prob\Big(k(\delta)\leq \bar{k}(\delta)\Big)
\geq& \Prob\Big(\|r^\delta_{\bar{k}(\delta)}-\E[r^\delta_{\bar{k}(\delta)}]\|\leq \tfrac12(\tau-1) \delta\Big)
= 1-\Prob\Big(\|r^\delta_{\bar{k}(\delta)}-\E[r^\delta_{\bar{k}(\delta)}]\|> \tfrac12(\tau-1) \delta\Big).
\end{align*}
By Chebyshev's inequality, the estimate \eqref{eqn:r_k^delta_var} and the definition \eqref{eqn:def_bar_k}, we have 
\begin{align*}
&\Prob\Big(\|r^\delta_{\bar{k}(\delta)}-\E[r^\delta_{\bar{k}(\delta)}]\|> \frac12(\tau-1) \delta\Big)
\leq 4(\tau-1)^{-2}\delta^{-2}\E[\|r^\delta_{\bar{k}(\delta)}-\E[r^\delta_{\bar{k}(\delta)}]\|^2]\\
\leq& 4(\tau-1)^{-2}n\big(c_1^* \bar{k}(\delta)^{-(1-\alpha)}\delta^{-2}+ c_2^* \bar{k}(\delta)^{-\frac{\alpha}{2}}\big)\\
\leq& 4(\tau-1)^{-2}n\big(c_1^* (c_{{\rm stop}}\delta^{-1})^{-\frac{4(1-\alpha)}{2-3\alpha}}\delta^{-2}+ c_2^* (c_{{\rm stop}}\delta^{-1})^{-\frac{2\alpha}{2-3\alpha}}\big)\\
\leq& 4(\tau-1)^{-2}n\big(c_1^* c_{{\rm stop}}^{-\frac{4(1-\alpha)}{2-3\alpha}}+ c_2^* c_{{\rm stop}}^{-\frac{2\alpha}{2-3\alpha}}\big)\delta^{\frac{2\alpha}{2-3\alpha}},
\end{align*}
and hence
\begin{align*}
\Prob\Big(k(\delta)\leq \bar{k}(\delta)\Big)
\geq 1- 4(\tau-1)^{-2}n(c_1^*+c_2^*)\big(c_{{\rm stop}}^{-\frac{4(1-\alpha)}{2-3\alpha}}+  c_{{\rm stop}}^{-\frac{2\alpha}{2-3\alpha}}\big)\delta^{\frac{2\alpha}{2-3\alpha}}.
\end{align*}
This gives the first estimate in this theorem.

Similarly, we define 
\begin{equation}\label{eqn:def_tilde_k}
\tilde{k}(\delta):=\big[(c_{{\rm stop}}\delta^{-1})^{\frac{2}{1-\alpha}}\ln(e+\delta^{-1})\big]+1
\geq k_0(\delta),    
\end{equation}
and then
\begin{align*}
\Prob\Big(k(\delta)\leq \tilde{k}(\delta)\Big)
\geq& 1-\Prob\Big(\|r^\delta_{\tilde{k}(\delta)}-\E[r^\delta_{\tilde{k}(\delta)}]\|> \tfrac12(\tau-1) \delta\Big),
\end{align*}
where
\begin{align*}
&\Prob\Big(\|r^\delta_{\tilde{k}(\delta)}-\E[r^\delta_{\tilde{k}(\delta)}]\|> \frac12(\tau-1) \delta\Big)
\leq 4(\tau-1)^{-2}\delta^{-2}\E[\|r^\delta_{\tilde{k}(\delta)}-\E[r^\delta_{\tilde{k}(\delta)}]\|^2]\\
\leq& 4(\tau-1)^{-2}n\big(c_1^* (c_{{\rm stop}}\delta^{-1})^{-2}\ln^{-(1-\alpha)}(e+\delta^{-1})\delta^{-2}+ c_2^* (c_{{\rm stop}}\delta^{-1})^{-\frac{\alpha}{1-\alpha}}\ln^{-\frac{\alpha}{2}}(e+\delta^{-1})\big)\xrightarrow{\;\;\;\delta \to 0^+\;\;} 0,
\end{align*}
which implies the finite-iteration termination property:
\begin{align*}
\lim_{\delta\to 0^+}\mathbb{P}\Big(k(\delta)\leq \tilde{k}(\delta)\Big)=1.
\end{align*}
This completes the proof of the theorem.
\end{proof}

\section{Convergence rates}\label{sec:rate}
In this section, we derive convergence rates for DSBGD under Assumptions \ref{ass:stepsize}(iii),  \ref{ass:source}, and
\ref{ass:A}. 
First, we bound the error $\E[\|e_k^\delta\|^2]$ in terms of the residual $\E[\|r_k^\delta\|^2]$.
\begin{lemma}
Let Assumptions \ref{ass:source} and \ref{ass:A} be fulfilled, then there holds
\begin{align}\label{eqn:err_res}
\E[\|e_{k+1}^\delta\|^2]
&\leq 2 \|w\|^2 \phi_{0,\nu}^2 +\frac{2}{n} \bigg(\sum_{\ell=1}^k \eta_\ell \phi_{\ell,\frac12}\bigg)^2 \delta^2
+C_A\sum_{\ell=1}^k \eta_\ell^2  \phi_{\ell,\frac12}^2 \E[\|r_\ell^\delta \|^2],
\end{align}
where $\phi_{\ell,s}=\big\| \prod_{t=\ell+1}^k P^X_t  B_X^s\big\|\leq (\frac{s}{e})^s(\sum_{t=\ell+1}^k\eta_t)^{-s}$ with the convention $\phi_{k,s}=\|B_X^s\|$.
\end{lemma}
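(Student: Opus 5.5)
We write the error recursion as a deterministic Landweber-type part plus a martingale-difference part. From \eqref{eqn:e_k^delta} and $r_k^\delta = Ae_k^\delta-\xi$, with $N_k=n^{-1}A^*-D_{j_k}^{n_x}A^*D_{i_k}^{n_y}$, we have
\begin{align*}
e_{k+1}^\delta = e_k^\delta-\eta_k n^{-1}A^*r_k^\delta+\eta_k N_k r_k^\delta
= P^X_k e_k^\delta + \eta_k n^{-1}A^*\xi + \eta_k N_k r_k^\delta .
\end{align*}
Unrolling from $e_1^\delta=x_1-x^\dag=-B_X^\nu w$ (Assumption \ref{ass:source}) up to step $k$ gives
\begin{align*}
e_{k+1}^\delta = -\prod_{t=1}^k P^X_t B_X^\nu w + \sum_{\ell=1}^k \eta_\ell \prod_{t=\ell+1}^k P^X_t\, n^{-1}A^*\xi + \sum_{\ell=1}^k \eta_\ell \prod_{t=\ell+1}^k P^X_t N_\ell r_\ell^\delta .
\end{align*}
Here the products are deterministic, because under Assumption \ref{ass:stepsize}(iii) the stepsizes are deterministic. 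Call the three terms $I_1$, $I_2$, $I_3$.

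The first two terms are deterministic, and we bound them together via $\|I_1+I_2\|^2\le 2\|I_1\|^2+2\|I_2\|^2$.
\begin{itemize}
\item[$\bullet$] For $I_1$ we have $\|I_1\|\le \phi_{0,\nu}\|w\|$ directly.
\item[$\bullet$] For $I_2$ we use the polar-type identity $\|\Pi n^{-1}A^*\xi\| = n^{-1/2}\|\Pi B_X^{1/2} U\xi\|$, where $U$ is a partial isometry from $n^{-1/2}A^* = B_X^{1/2}U$. Since the $P^X_t$ commute with $B_X$, this gives $\|I_2\|\le n^{-1/2}\sum_\ell \eta_\ell\phi_{\ell,1/2}\delta$. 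Squaring yields the term $\frac{2}{n}(\sum_\ell\eta_\ell\phi_{\ell,1/2})^2\delta^2$.
\end{itemize}

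For the cross terms and $I_3$, each summand of $I_3$ satisfies $\E[N_\ell r_\ell^\delta\mid\mathcal F_\ell]=0$, because $\E[D_{j}^{n_x}A^*D_{i}^{n_y}]=n^{-1}A^*$ by \eqref{eqn:D} and $r_\ell^\delta$ is $\mathcal F_\ell$-measurable. The summands are therefore martingale differences. They are orthogonal to each other and to the deterministic part, so
\begin{align*}
\E[\|e_{k+1}^\delta\|^2]=\|I_1+I_2\|^2+\sum_{\ell=1}^k\eta_\ell^2\E\Big[\Big\|\prod_{t=\ell+1}^kP^X_t N_\ell r_\ell^\delta\Big\|^2\Big].
\end{align*}
The decomposition must be arranged so that no factor $2$ appears in front of the $C_A$ term. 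The stochastic part is exactly orthogonal, so only $I_1+I_2$ needs splitting, and the target constants are consistent with this.

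The main step is to bound $\E[\|\Pi N_\ell r\|^2\mid\mathcal F_\ell]$, with $\Pi=\prod_{t=\ell+1}^kP^X_t$, by $C_A\phi_{\ell,1/2}^2\|r\|^2$. Since a variance is at most a second moment,
\begin{align*}
\E\|\Pi N_\ell r\|^2 \le \E\|\Pi D_{j}^{n_x}A^*D_{i}^{n_y}r\|^2 = \frac1n\sum_{i,j}\|\Pi D_j^{n_x}A^*D_i^{n_y}r\|^2 .
\end{align*}
Using Assumption \ref{ass:A} in its range form, there are bounded $G_j$ with $D_j^{n_x}A^* = A^*G_j$ and $\|G_j\|^2\le c_j$. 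The equivalence of the two forms follows from Douglas' lemma: the inequality $D_jA^*AD_j\preceq c_jA^*A$ transposes to a factorization with norm bound $\sqrt{c_j}$. Then
\begin{align*}
\|\Pi A^*G_jD_i^{n_y}r\| = \sqrt n\,\|\Pi B_X^{1/2}UG_jD_i^{n_y}r\|\le \sqrt n\,\phi_{\ell,1/2}\sqrt{c_j}\,\|D_i^{n_y}r\|.
\end{align*}
Summing over $i$ gives $\sum_i\|D_i^{n_y}r\|^2=\|r\|^2$, and summing over $j$ gives $C_A$, which yields the bound $C_A\phi_{\ell,1/2}^2\|r\|^2$. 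The delicate point is the Douglas-lemma direction: one must check that the adjoint factorization really produces $D_jA^*=A^*G_j$ (equivalently, factoring $AD_j=G_j^*A$), and I would verify this carefully.

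Finally, the spectral bound $\phi_{\ell,s}\le (s/e)^s(\sum_{t=\ell+1}^k\eta_t)^{-s}$ follows from $\|P^X_t\|\le1$ and $1-x\le e^{-x}$, which give $\sup_{\lambda}\lambda^s e^{-\lambda\sum\eta_t}=(s/e)^s(\sum\eta_t)^{-s}$. This uses $\eta_t\|B_X\|\le 1$, which holds since $\|B_X\|\le L/n_x\cdot$ and $c_0<2$ gives nonnegativity up to the standard $|1-x|\le e^{-x}$ regime; if needed one uses $c_0\le\alpha/2<1$.
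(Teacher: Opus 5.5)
Your proposal is correct and follows the paper's proof essentially step for step. It uses the same unrolled recursion, the same bias--variance split with the factor $2$ only on the deterministic part, and the same identity $\|\prod_{t=\ell+1}^k P^X_t A^*\|=\sqrt{n}\,\phi_{\ell,\frac12}$. Your variance bound is the Douglas-lemma factorization $D_j^{n_x}A^*=A^*T_j$ with $\|T_j\|\le\sqrt{c_j}$, which the paper packages as Lemma \ref{lem:N}; the direction you were worried about is correct, taking $S=D_j^{n_x}A^*$, $T=A^*$, $SS^*=D_j^{n_x}A^*AD_j^{n_x}$.

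Only your last paragraph is muddled. The bound on $\phi_{\ell,s}$ needs $\eta_t\|B_X\|\le 1$, which follows from $\eta_t\|B_X\|\le c_0/n_x$ together with $c_0\le\alpha/2$; it does not follow from $c_0<2$ alone when $n_x=1$. The condition $c_0\le\alpha/2$ is the regime of Theorem \ref{thm:err-total}, where the lemma is used.
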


\begin{proof}
With the definitions $P^Y_k$, $N_k$ and $r_k^\delta=Ae_k^\delta -\xi$, it follows from the recursion \eqref{eqn:e_k^delta} that
\begin{align*}
e_{k+1}^\delta &= e_k^\delta - \eta_k D_{j_k}^{n_x} A^* D_{i_k}^{n_y} (Ae_k^\delta -\xi)
= P^X_k e_k^\delta +\eta_k N_k Ae_k^\delta+\eta_k D_{j_k}^{n_x} A^* D_{i_k}^{n_y} \xi\\
&= P^X_k e_k^\delta +\eta_k N_k Ae_k^\delta - \eta_k N_k\xi +  n^{-1}\eta_k A^* \xi
= P^X_k e_k^\delta +\eta_k N_k r_k^\delta +  n^{-1}\eta_k A^* \xi.
\end{align*}
Similar to the analysis in the proof of Lemma \ref{lem:r_k^delta}, 
we obtain
\begin{align*}
e_{k+1}^\delta= \prod_{t=1}^k P^X_t e_{1}^\delta +\sum_{\ell=1}^k \eta_\ell \prod_{t=\ell+1}^k P^X_t  N_\ell r_\ell^\delta + \frac1n\sum_{\ell=1}^k \eta_\ell \prod_{t=\ell+1}^k P^X_t  A^*\xi.
\end{align*}
Then by the bias-variance decomposition, Cauchy-Schwarz inequality and the identity $$\E[\langle N_{t}r_t^\delta, N_{\ell}r_\ell^\delta\rangle|\mathcal{F}_t]=0$$ for any $t>\ell$, we have
\begin{align*}
\E[\|e_{k+1}^\delta\|^2]&= \bigg\|\prod_{t=1}^k P^X_t e_{1}^\delta +\frac1n\sum_{\ell=1}^k \eta_\ell \prod_{t=\ell+1}^k P^X_t  A^*\xi\bigg\|^2+\E\bigg[\bigg\|\sum_{\ell=1}^k \eta_\ell \prod_{t=\ell+1}^k P^X_t  N_\ell r_\ell^\delta \bigg\|^2\bigg]\\
&\leq 2\bigg\|\prod_{t=1}^k P^X_t e_{1}^\delta\bigg\|^2 +\frac{2}{n^2} \bigg\|\sum_{\ell=1}^k \eta_\ell \prod_{t=\ell+1}^k P^X_t  A^*\bigg\|^2 \delta^2
+\sum_{\ell=1}^k \eta_\ell^2 \E\bigg[\bigg\|  \prod_{t=\ell+1}^k P^X_t  N_\ell r_\ell^\delta \bigg\|^2\bigg].
\end{align*}
Under Assumption \ref{ass:source}, with the definition $B_X = n^{-1} A^* A$, the triangle inequality and Lemma \ref{lem:N} (under Assumption \ref{ass:A}), there holds
\begin{align*}
\E[\|e_{k+1}^\delta\|^2]
&\leq 2 \|w\|^2 \phi_{0,\nu}^2 +\frac{2}{n} \bigg(\sum_{\ell=1}^k \eta_\ell \phi_{\ell,\frac12}\bigg)^2 \delta^2
+C_A\sum_{\ell=1}^k \eta_\ell^2  \phi_{\ell,\frac12}^2 \E[\|r_\ell^\delta \|^2].
\end{align*}
Finally Lemma \ref{lem:estimate-B} completes the proof.
\end{proof}

Now, we give the proof of Theorem \ref{thm:termination} which provides the convergence rates of DSBGD.

\begin{proof}[Proof of Theorem \ref{thm:err-total}]
We estimate each components of the upper bound in \eqref{eqn:err_res}.
For the first term, Lemma \ref{lem:estimate-B} and the inequality $k+1\leq 2k$ yield
\begin{align*}
\phi_{0,\nu}\leq \Big(\frac{\nu L}{e c_0}\Big)^\nu k^{-\nu(1-\alpha)}\leq \Big(\frac{2^{1-\alpha}\nu L}{e c_0}\Big)^\nu (k+1)^{-\nu(1-\alpha)}\leq \Big(\frac{\nu L}{c_0}\Big)^\nu (k+1)^{-\nu(1-\alpha)}.
\end{align*}
For the second term, the inequality \eqref{eqn:bdd-phi-r} in Lemma \ref{lem:sum} and $\|A\|^2\leq nL$ imply
\begin{align*}
\sum_{\ell=1}^k \eta_\ell \phi_{\ell,\frac12}
&\leq \eta_k \|B_X^s\|+(2e)^{-\frac12}\sum_{\ell=1}^{k-1} \frac{\eta_\ell}{(\sum_{t=\ell+1}^k\eta_t)^{\frac12}}
\leq c_0 L^{-1} \|B_X^s\| k^{-\alpha} +(2e)^{-\frac12}
c_0^{\frac12}L^{-\frac12} B(\tfrac12,1-\alpha) k^{\frac{1-\alpha}{2}}\\
&\leq \big(n^{-1}c_0L^{-1} \|A\|^2 +(2e)^{-\frac12}
c_0^{\frac12}L^{-\frac12} B(\tfrac12,1-\alpha) \big)(k+1)^{\frac{1-\alpha}{2}}\\
&\leq \big(c_0  +(2e)^{-\frac12}
c_0^{\frac12}L^{-\frac12} B(\tfrac12,1-\alpha) \big)(k+1)^{\frac{1-\alpha}{2}}.
\end{align*}
Next, we bound the last term. By the definitions of $P^X_t$, $P^Y_t$, $B_X$, and $B_Y$, there holds
\begin{align*}
\bigg\| \prod_{t=\ell+1}^k P^X_t  B_X^\frac12\bigg\|=\bigg\| \prod_{t=\ell+1}^k P^Y_t  B_Y^\frac12\bigg\|,
\end{align*}
and thus
\begin{align*}
\sum_{\ell=1}^k \eta_\ell^2  \phi_{\ell,\frac12}^2 \E[\|r_\ell^\delta \|^2]=\sum_{\ell=1}^k \eta_\ell^2  \bigg\| \prod_{t=\ell+1}^k P^X_t  B_X^\frac12\bigg\|^2 \E[\|r_\ell^\delta \|^2]
=L^{-1}\mathfrak{I}'_2,
\end{align*}
with $\mathfrak{I}'_2$ given in \eqref{eqn:I_2_for_rate}. 
By the upper bound \eqref{eqn:I_2} of $\mathfrak{I}'_2$ derived in the proof of Lemma \ref{lem:r_k^delta} in Appendix \ref{app:lem:r_k^delta} and the inequalities \eqref{eqn:I_22}, \eqref{eqn:bdd-log}, \eqref{eqn:I_21_2} and $k+1\leq 2k$, we have
\begin{align*}
L^{-1}\mathfrak{I}'_2 \leq c_0^2 (n_x L)^{-1}   (nc_1^* k^{-(1+\alpha)}+nc_2^*k^{-2\alpha}\delta^2)
+(2e)^{-1}(nc_1^* \mathfrak{I}_{2,1}+nc_2^*\delta^2 \mathfrak{I}_{2,2}),
\end{align*}
where 
\begin{align*}
\mathfrak{I}_{2,1}&
\leq
2^3 \alpha^{-1}c_0L^{-1}k^{-(1-\alpha)}\leq 2^4 c_0 L^{-1} \alpha^{-1}(k+1)^{-(1-\alpha)},\\
\mathfrak{I}_{2,2}&
\leq 2^{2}c_0L^{-1} k^{1-\alpha}+ c_0L^{-1} 
\begin{cases}
2 k^{1-\alpha}, & \mbox{when}\; \alpha = \frac12,\\
\tfrac{2^{2}}{|1-2\alpha|}, & \mbox{otherwise}.
\end{cases}
\end{align*}
Finally, by combining the above estimates, we derive from \eqref{eqn:err_res} that
\begin{align*}
\E[\|e_{k+1}^\delta\|^2]
&\leq 2 \|w\|^2 \phi_{0,\nu}^2 +\frac{2}{n} \bigg(\sum_{\ell=1}^k \eta_\ell \phi_{\ell,\frac12}\bigg)^2 \delta^2
+C_A\sum_{\ell=1}^k \eta_\ell^2  \phi_{\ell,\frac12}^2 \E[\|r_\ell^\delta \|^2]\\
&\leq 2 \|w\|^2 \nu^{2\nu}c_0^{-2\nu}L^{2\nu}(k+1)^{-2\nu(1-\alpha)}
+2 n^{-1} \big(c_0 +(2e)^{-\frac12}
c_0^{\frac12}L^{-\frac12} B(\tfrac12,1-\alpha) \big)^2 (k+1)^{1-\alpha} \delta^2\\
&\quad +C_A\Big(c_0^2 (n_x L)^{-1}   (nc_1^* k^{-(1+\alpha)}+nc_2^*k^{-2\alpha}\delta^2)
+(2e)^{-1}(nc_1^* \mathfrak{I}_{2,1}+nc_2^*\delta^2 \mathfrak{I}_{2,2})\Big)\\
&\leq n c_* C_A (k+1)^{-\min(1,2\nu)(1-\alpha)}+ n c_{**} C_A (k+1)^{1-\alpha} \delta^2
\end{align*}
with some the constants $c_*$ and $c_{**}$ depending on $\nu$, $\|w\|$, $L$, $\alpha$ and $c_0$, but are independent of $n$, $k$ and $\delta$.
This completes the proof of the theorem.
\end{proof}

\section{Numerical experiments}\label{sec:num}

In this section, we provide numerical experiments for several linear inverse problems in Hilbert spaces to complement the analysis.

~\\
\noindent\emph{\textbf{Example 5.1.}}
First, we employ three model examples, i.e., \texttt{phillips} (mildly ill-posed), \texttt{gravity} (severely ill-posed) and \texttt{shaw}
(severely ill-posed), which are all Fredholm integral equations of the first kind over an interval $I$: 
\begin{equation}\label{eqn:Fredholm}
    Ax(s) = \int_I k(s,t)x(t){\rm d}t = y(s), \quad s\in I,
\end{equation}
with the kernel $k(s,t): I\times I\to\mathbb{R}^+$. The examples are generated from the code \texttt{phillips}, \texttt{gravity} and \texttt{shaw}, taken from the \texttt{MATLAB} package Regutools \cite{P.C.Hansen2007} (publicly available at \url{http://people.compute.dtu.dk/pcha/Regutools/}, last accessed on August 20, 2020). 
All the examples are discretized into a linear problem of the form \eqref{eqn:lininv} of the size $n_y = n_x = 100$ with $A_{i,j}: \mathbb{R}^{10} \rightarrow \mathbb{R}^{10}$ for any $1\leq i\leq n_y$ and $1\leq j\leq n_x$. 

We generate the exact solution $x^\dag$ by $x^\dag = \|x_e\|_{\ell^\infty}^{-1}x_e$,
with $x_e$ being the exact solution provided by the package and $\|\cdot\|_{\ell^\infty}$ the maximum norm of a vector. 
The exact data $y^\dag$ is given by $y^\dag=A x^\dag$ and the noisy data $y^\delta$ is generated by
\begin{equation}\label{eqn:num_y_data}
y^\delta_i:=y^\dag_i+\epsilon\|y^\dag\|_{\ell^\infty}\xi_i,\quad i=1,\cdots,n,
\end{equation}
where the random variables $\xi_i$ follow the standard Gaussian distribution, and $\epsilon > 0$ is the relative noise level. 

For comparison, we adopt the RBCD method proposed in \cite{JinLiu:2025} and the benchmark Landweber method (LM), both employing a constant stepsize with $\eta_k = \|A\|_F^{-2}$.
Among constant stepsize schedules for RBCD, this choice allows the method to achieve its best accuracy without requiring many more iterations than LM. 
Our proposed method, DSBGD, employs two stepsize rules: a constant stepsize with $\eta_k = c_0\max_{1\leq i\leq n_y}\|A_i\|^{-2}$, referred to as DSBGD-S1; and an adaptive stepsize with $\eta_k = c_0\|A_{i_k}\|^{-2}$, where $i_k$ is chosen uniformly at random from $\{1,\ldots,n_y$\} as in the DSBGD iteration \eqref{eqn:DSBGD}, referred to as DSBGD-S2. 
For \texttt{phillips} and \texttt{gravity}, we set $c_0 = 0.05$ in both DSBGD-S1 and DSBGD-S2; for \texttt{shaw}, we set $c_0 = 0.1$ in DSBGD-S1 and $c_0 \in \{0.1,0.002\}$ in DSBGD-S2.
The stepsizes for DSBGD method are chosen to balance optimal convergence and computational efficiency across all noise levels. 
Note the benchmark method, LM, can achieve order optimality \cite{HankeNeubauerScherzer:1995}. All iterative methods are initialized to zero. The statistical quantities generated by DSBGD and RBCD are computed based on 50 independent runs.

The numerical comparison results for the three examples, with varying noise levels $\epsilon$, are presented in Tables \ref{tab:phil}-\ref{tab:shaw}. The stopping index $k_*=k(\delta)$, measured in epochs, is chosen by the discrepancy principle \eqref{eqn:discrepancy_1} with $\tau=1.1$ for all methods. Here, one epoch refers to one Landweber iteration, $n_x n_y$ DSBGD iterations and $n_x$ RBCD iterations, respectively, so that their overall computational complexity (excluding the cost of checking the stopping criterion) is comparable. 
The quantity $e_*$ denotes the square root of the relative mean squared error, defined by $e_* = \mathbb{E}[ \| x_{k_*}^{\delta} - x^{\dagger} \|^2 ]^{1/2} / \| x^{\dagger} \|$ for DSBGD and RBCD, and as $\| x_{k_*}^{\delta} - x^{\dagger} \|/\| x^{\dagger} \|$ for LM. As the noise level $\epsilon$ increases, all methods terminate earlier and the reconstruction error increases. LM achieves the smallest error, but requires a moderate number of iterations. For all these three examples, both DSBGD-S1 and DSBGD-S2 clearly outperform RBCD in terms of accuracy, while requiring far fewer iterations than both LM and RBCD.
Compared with RBCD, the DSBGD variants thus offer a favorable trade-off between reconstruction accuracy and computational efficiency. 
For larger relative noise level $\epsilon$, the reconstruction accuracy of DSBGD under both stepsize schedules is comparable to that of LM. 
However, as $\epsilon$ decreases, the accuracy of DSBGD does not decay at a rate comparable to LM, suggesting that the current stepsizes may not allow DSBGD to achieve an optimal convergence rate (in terms of the noise level) as LM.

\begin{table}[htp!]
  \centering
  \setlength{\tabcolsep}{3pt}
  \begin{threeparttable}
  \caption{The comparison under discrepancy principle for \texttt{phillips}.}\label{tab:phil}
    \begin{tabular}{cccccccccccc}
    \toprule
    \multicolumn{1}{c}{Method}&
    \multicolumn{2}{c}{DSBGD-S1}&
    \multicolumn{2}{c}{DSBGD-S2}&
    \multicolumn{2}{c}{RBCD}&
    \multicolumn{2}{c}{LM}\\
    \cmidrule(lr){2-3} \cmidrule(lr){4-5} \cmidrule(lr){6-7} \cmidrule(lr){8-9}
    
    $\epsilon$ & $k_*$ & $e_*$ & $k_*$ & $e_*$ &$ k_*$ & $e_*$ & $k_*$ & $e_*$ \\
    \midrule
        1e-3   &  180.4  &   1.53e-1  & 149.6  &  1.56e-1   & 5260.3  & 3.38e-1  & 421.0  & 2.43e-2 \cr
        5e-3   &  37.6	 &   1.59e-1  & 35.9   &  1.57e-1   & 190.9   & 3.46e-1  & 178.0  & 4.00e-2 \cr
        1e-2   &  17.0   &   1.66e-1  & 18.1   &  1.67e-1   & 92.1    & 3.58e-1  & 79.0   & 6.77e-2 \cr
        5e-2   &  4.5    &   1.98e-1  & 4.3    &  2.01e-1   & 21.9    & 3.68e-1  & 22.0   & 1.25e-1 \cr
    \bottomrule
    \end{tabular}
    \end{threeparttable}
\end{table}

\begin{table}[hbt!]
  \centering
  \setlength{\tabcolsep}{3pt}
  \begin{threeparttable}
  \caption{The comparison under discrepancy principle for \texttt{gravity}.}\label{tab:grav}
    \begin{tabular}{cccccccccccc}
    \toprule
    \multicolumn{1}{c}{Method}&
    \multicolumn{2}{c}{DSBGD-S1}&
    \multicolumn{2}{c}{DSBGD-S2}&
    \multicolumn{2}{c}{RBCD}&
    \multicolumn{2}{c}{LM}\\
    \cmidrule(lr){2-3} \cmidrule(lr){4-5} \cmidrule(lr){6-7} \cmidrule(lr){8-9}
    
    $\epsilon$ & $k_*$ & $e_*$ & $k_*$ & $e_*$ &$ k_*$ & $e_*$ & $k_*$ & $e_*$ \\
    \midrule
        1e-3   &  240.0	 &   1.53e-1  & 180.0   &  1.53e-1   &  3280.3 &  4.98e-1  & 928.0  & 3.08e-2 \cr
        5e-3   &  32.0   &   1.59e-1  & 26.0  	&  1.60e-1   &  230.9  &  4.89e-1  & 140.0  & 5.56e-2 \cr
        1e-2   &  15.5   &   1.71e-1  & 13.3	&  1.65e-1   &   84.6  &  4.88e-1  & 69.0   & 7.15e-2 \cr
        5e-2   &  3.0    &   2.06e-1  & 3.1	    &  2.09e-1   &   15.7  &  5.17e-1  & 14.0  & 1.42e-1 \cr
    \bottomrule
    \end{tabular}
    \end{threeparttable}
\end{table}

DSBGD-S1 and DSBGD-S2 perform comparably for \texttt{phillips} and \texttt{gravity}. For \texttt{shaw}, however, the two variants differ noticeably. Note that DSBGD-S2 with $c_0=0.02$ achieves a reconstruction accuracy comparable to or even better than that of LM, whereas with $c_0=0.1$ it stops much earlier but yields a less accurate reconstruction. 
Therefore, under suitable adaptive stepsizes, DSBGD-S2 can attain an accuracy comparable to that of DSBGD-S1 with far fewer iterations. 
This observation can be attributed to the distribution of the row-block norms shown in Figure \ref{fig:A_ik}. For \texttt{phillips} and \texttt{gravity} the norms of blocks
are concentrated near their maximum value employed for DSBGD-S1, which explains why DSBGD-S1 and DSBGD-S2 behave similarly. For \texttt{shaw}, in contrast, the norms vary strongly, making the adaptive stepsize in DSBGD-S2 much larger than that in DSBGD-S1 with the same $c_0$ for most iterations. This observation suggests that the adaptive stepsize can better exploit the local geometry of the problem as iterations proceed, and hence achieve better accuracy than DSBGD-S1 with a smaller $c_0$ without requiring more iterations.

\begin{figure}[hbt!]
    \centering
    \setlength{\tabcolsep}{0pt}
    \begin{tabular}{ccc}
    \includegraphics[height=.25\textwidth]{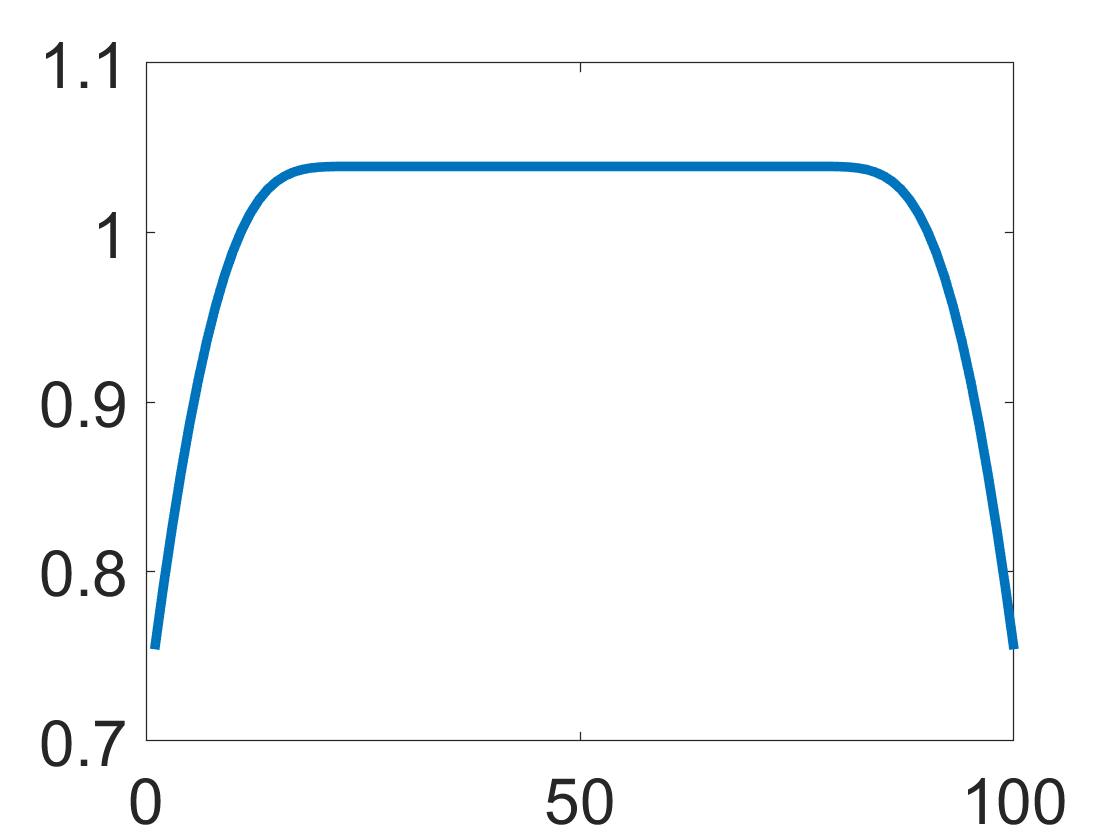}&
    \includegraphics[height=.25\textwidth]{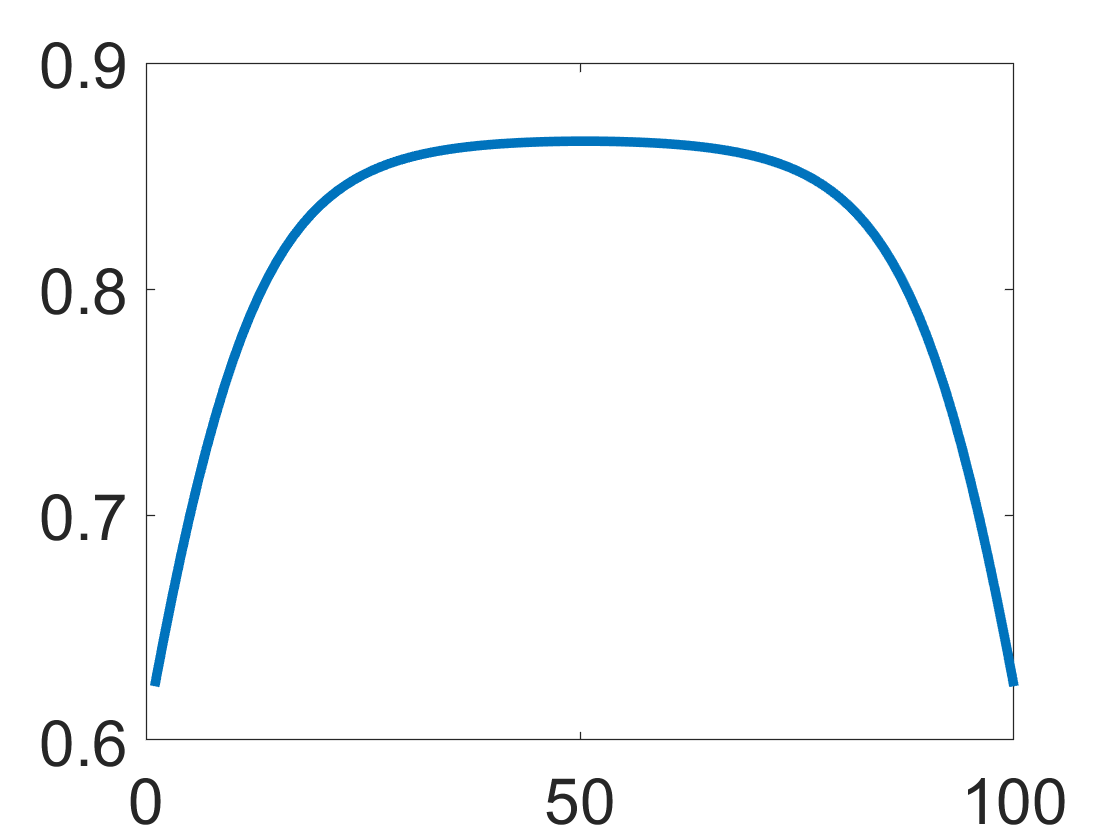}&
    \includegraphics[height=.25\textwidth]{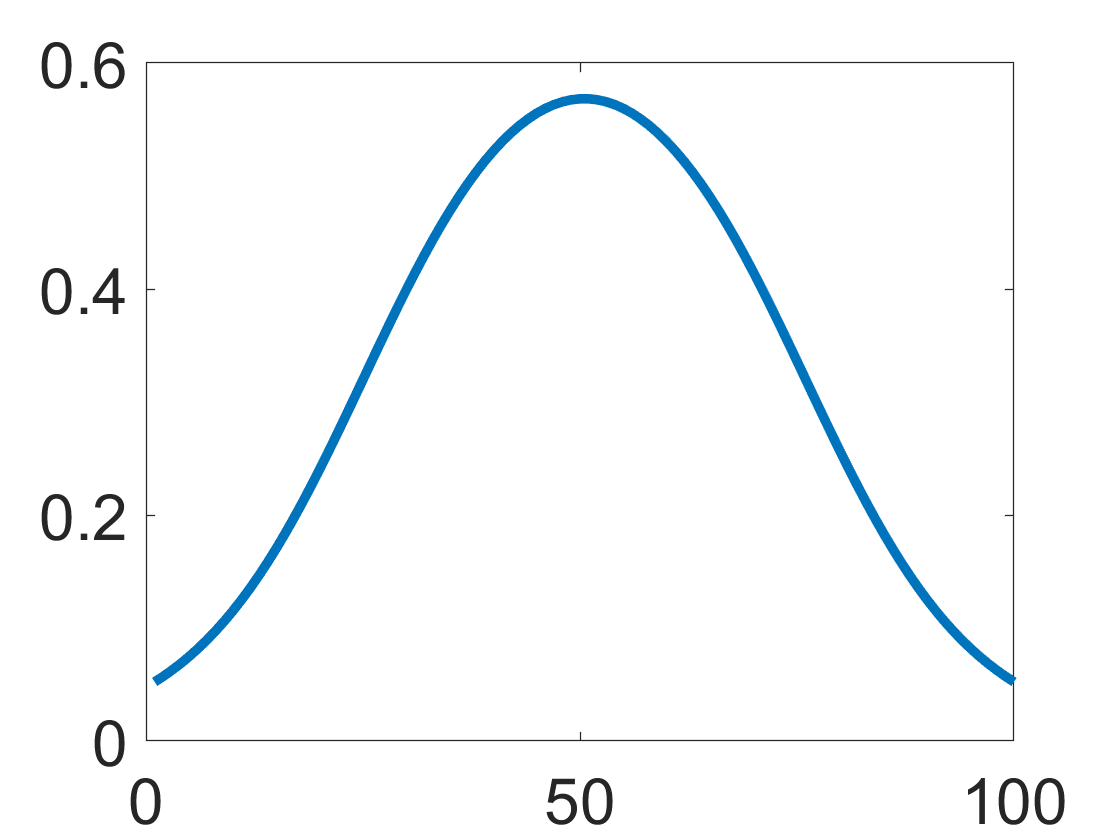}\\
    {\normalsize{(a) \texttt{phillips}}}&
    {\normalsize{(b) \texttt{gravity}}}&
    {\normalsize{(c) \texttt{shaw}}}
    \end{tabular}
    \caption{Row-block norms $\|A_{i}\|$ as a function of the block index $i$ for (a) \texttt{phillips}, (b) \texttt{gravity}, and (c) \texttt{shaw}.}
    \label{fig:A_ik}
\end{figure}

\begin{table}[htp!]
  \centering
  \setlength{\tabcolsep}{3pt}
  \begin{threeparttable}
  \caption{The comparison under discrepancy principle for \texttt{shaw}.}\label{tab:shaw}
    \begin{tabular}{cccccccccccc}
    \toprule
    \multicolumn{1}{c}{Method}&
    \multicolumn{2}{c}{DSBGD-S1}&
    \multicolumn{2}{c}{DSBGD-S2}&
    \multicolumn{2}{c}{DSBGD-S2}&
    \multicolumn{2}{c}{RBCD}&
    \multicolumn{2}{c}{LM}\\
    \cmidrule(lr){2-3} \cmidrule(lr){4-5} \cmidrule(lr){6-7} \cmidrule(lr){8-9}\cmidrule(lr){10-11}
    
    $\epsilon$ & $k_*$ & $e_*$ & $k_*$ & $e_*$ &$ k_*$ & $e_*$ & $k_*$ & $e_*$ & $k_*$ & $e_*$\\
    \midrule
        1e-3   &  3450.8 &   1.74e-1  &  342.3	&  2.21e-1   & 1862.8 &  1.10e-1  &   14711.6  &  4.37e-1     & 14449.0  & 7.29e-2 \cr
        5e-3   &  58.4   &   2.29e-1  &  29.4	&  2.40e-1   & 142.7  &  1.42e-1  &   372.3    &  4.71e-1     & 244.0      & 1.66e-1 \cr
        1e-2   &  36.3   &   2.30e-1  &  8.4    &  2.48e-1   & 43.4   &  1.53e-1  &   156.5    &  4.62e-1     & 154.0      & 1.71e-1 \cr
        5e-2   &  7.3    &   2.88e-1  &  1.7	&  2.80e-1   & 8.5    &  1.98e-1  &   31.5     &  4.94e-1     & 31.0       & 2.37e-1 \cr
    \bottomrule
    \end{tabular}
    \end{threeparttable}
\end{table}
~\\
~\\
~\\
\noindent\emph{\textbf{Example 5.2.}} 
In this example, we consider the problem \eqref{eqn:lininv} with the forward operator $A$ described in Remark \ref{rem:tenosr_product_A}.
This particular form of $A$ satisfies Assumption \ref{ass:A}, which is required for deriving the convergence rate in Theorem \ref{thm:err-total}.
Consider the integration operator \(K:L^{2}([0,1])\to L^{2}([0,1])\) defined by
\begin{equation}
K(x):[0,1]\to \mathbb{R}:s\mapsto \int_{0}^{s}x(t)\,\mathrm{d}t.
\label{eq:K-def}
\end{equation}
According to the Cauchy--Schwarz inequality, we have
\begin{equation}
\|K(x)\|^{2}
= \int_{0}^{1}\left(\int_{0}^{s}x(t)\,\mathrm{d}t\right)^{2}\mathrm{d}s
\leq \int_{0}^{1}s\int_{0}^{1}x(t)^{2}\,\mathrm{d}t\,\mathrm{d}s
= \frac{1}{2}\|x\|^{2}
\label{eq:K-bound}
\end{equation}
for all $x\in L^{2}([0,1])$, which shows that \(K\) is a well-defined linear bounded operator.
Using the operator \(K\), we consider the following forward model applied to a vector of functions \((x[i])_{i=1}^{n_x}\in (L^{2}([0,1]))^{n_x}\).

\begin{definition}[Forward operator]
\label{def:forward-operator}
For \(n_y\geq n_x\geq 1\) and a given matrix \(V=(v_{ij})_{i,j}\in\mathbb{R}^{n_y\times n_x}\) of rank \(n_y\), we define the forward operator
\begin{equation}
A:(L^{2}([0,1]))^{n_x}\to (L^{2}([0,1]))^{n_y}:
x\mapsto \left(\sum_{j=1}^{n_x}v_{ij}\,K(x[j])\right)_{i=1}^{n_y}.
\label{eq:A-def}
\end{equation}
\end{definition}

According to our general notation (cf. Remark \ref{rem:tenosr_product_A}), we have $A=V\otimes K$, and the theory presented in the previous sections can be applied for solving the inverse problem \(Ax=y\). Note that this equation is clearly ill-posed because the range of \(K\) is non-closed (and equal to the Sobolev space
$H_{0}^{1}([0,1]):=\bigl\{y\in L^{2}([0,1])\mid y'\in L^{2}([0,1])\ \wedge\ y(0)=0\bigr\}$
of all weakly differentiable functions vanishing at \(0\)).
More generally, one could replace the integration operator by any bounded (integral) operator \(K:L^{2}([0,1])\to L^{2}([0,1])\) with non-closed range.

For all presented numerical results we use $n_y = n_x = 30$ and take $V$ to be a random column-orthonormal matrix generated by the QR factorization of a standard Gaussian matrix, i.e., $V = Q$ where $[Q,\sim] = \operatorname{qr}(\operatorname{randn}(n_y, n_x))$. We discretize $K$ with the composite trapezoidal rule using $p=30$ intervals. The true unknown $x^{\dagger} = (x^{\dagger}[1], \ldots, x^{\dagger}[n
_x])$ is chosen such that only the first block is nonzero and is a piecewise constant $x^\dagger[1] = 0.2\chi_{[0,0.25)} + 0.8\chi_{[0.25,0.5)} + 0.4\chi_{[0.5,0.75)} + 0.9\chi_{[0.75,1]}$, while all other blocks are identically zero. 
The operator $A$ is discretized into a linear problem of the form \eqref{eqn:lininv} of the size $n_y = n_x = 30$ with $A_{i,j}: \mathbb{R}^{p^2} \rightarrow \mathbb{R}^{p^2}$ for any $1\leq i\leq n_y$ and $1\leq j\leq n_x$. 
The exact data \(y = Ax^{\dagger}\) are computed via numerical integration followed by application of $V$ and the noisy data $y^\delta$ is generated in the same way as \eqref{eqn:num_y_data}. 

Now we examine more closely the convergence behavior of the DSBGD iterates, and compare it against that of RBCD, SGD and LM methods. All iterative methods are initialized to zero. DSBGD and SGD methods use a constant stepsize $\eta_k = c_0\max_{1\leq i\leq n_y}\|A_i\|^{-2}$ with $c_0 = 1.8$, and RBCD and LM use a constant stepsize with $\eta_k = c_0\|A\|_F^{-2}$ with $c_0 = 1.99$. The constant stepsize is chosen to be as large as possible while still ensuring stable reconstruction. All statistical quantities generated by DSBGD, RBCD and SGD methods are computed from $50$ runs.

The accuracy of the reconstructions is measured by the square root of the relative mean squared errors, defined by 
\begin{equation*}
    e_* = \mathbb{E}[ \| x_{k_*}^{\delta} - x^{\dagger} \|^2 ]^{\frac12} / \| x^{\dagger} \|,\quad 
    e_{\min} = \mathbb{E}[ \| x_{\hat{k}}^{\delta} - x^{\dagger} \|^2 ]^{\frac12} / \| x^{\dagger} \|,
\end{equation*}
where the stopping index $k_* = k(\delta)$, measured in terms of epoch count, is selected by the discrepancy principle \eqref{eqn:discrepancy_1} with $\tau=1.1$ for all methods, $\hat{k}$ denotes the iteration index attaining the smallest error along the respective iteration trajectory for DSBGD, RBCD  and SGD methods. 
The relative error for LM is given by $\| x_{k_*}^{\delta} - x^{\dagger} \|/\| x^{\dagger} \|$.
Here, one epoch refers to one Landweber iteration, $n_x n_y$ DSBGD iterations and $n_x$ RBCD iterations or $n_y$ SGD iterations.

\begin{table}[htp!]
  \centering
  \small
  \setlength{\tabcolsep}{3pt}
  \begin{threeparttable}
  \caption{The comparison between DSBGD, RBCD, SGD and LM for \textbf{Example 5.2}.}\label{tab:num_integral}
    \begin{tabular}{ccccccccccccc}
    \toprule
    \multicolumn{1}{c}{Method}&
    \multicolumn{3}{c}{DSBGD}&
    \multicolumn{3}{c}{RBCD}&
    \multicolumn{3}{c}{SGD}&
    \multicolumn{3}{c}{LM}\\
    \cmidrule(lr){2-4} \cmidrule(lr){5-7} \cmidrule(lr){8-10} \cmidrule(lr){11-13}
       $\epsilon$ & $k_{*}$ & $e_{*}$ & $e_{\min}$ & $k_{*}$ & $e_{*}$ & $e_{\min}$ & $k_{*}$ & $e_{*}$ & $e_{\min}$ & $k_{*}$ & $e_{*}$ & $e_{\min}$ \\
    \midrule
        1e-3   &  397.0 &   1.69e-1 & 1.38e-1 & 13068.8   &  1.69e-1  & 1.38e-1 &  397.0 &  1.96e-1 & 1.38e-1 & 10350.0 & 1.69e-1 & 1.38e-1\cr
        5e-3   &  61.8   &   2.64e-1 & 2.35e-1 & 1994.0  	& 2.64e-1   & 2.35e-1 &  61.4  &  2.64e-1 & 2.35e-1 & 1996.0   & 2.65e-1 & 2.35e-1\cr
        1e-2   &  32.9    &   3.29e-1 & 2.83e-1 & 1040.4	&  3.29e-1  & 2.82e-1 &  33.0  &  3.28e-1 & 2.83e-1 & 1051.0   & 3.30e-1 & 2.82e-1\cr
        5e-2   &  8.0     &   5.15e-1 & 4.83e-1 & 107.3	    &  5.73e-1  & 4.80e-1 &   10.9  &  4.99e-1 & 4.85e-1 &  109.0   & 5.73e-1 & 4.81e-1\cr
    \bottomrule
    \end{tabular}
    \end{threeparttable}
\end{table}

The numerical results with different noise levels are shown in Table \ref{tab:num_integral}, and the convergence behavior of the relative error $e$ versus the iteration epoch $k$ is illustrated in Figure \ref{fig:num_e}. For all noise levels, four methods achieve comparable accuracy, indicating order-optimal convergence for all of them. Moreover, both $e_*$ and $e_{\min}$ for all methods decrease toward zero as the relative noise level $\epsilon$ tends to zero, while the stopping indices increase. Compared with RBCD and LM methods, our proposed DSBGD method converges as fast as SGD. 
A distinct advantage of DSBGD emerges when the discrepancy principle is adopted as the stopping rule. 
As described in Algorithm \ref{alg:DSBGD}, the residual in DSBGD is updated recursively,  $r_{k+1}^\delta 
=r_{k}^\delta-\eta_k A_{\cdot,j_k} A_{i_k,j_k}^*r_{k,i_k}^\delta$. In contrast, SGD must evaluate the full operator to update the residual at each iteration. Consequently, DSBGD requires less computational effort when the discrepancy principle is used for termination.

\begin{figure}[hbt!]
    \centering
    \setlength{\tabcolsep}{0pt}
    \begin{tabular}{cccc}
    \includegraphics[height=.19\textwidth]{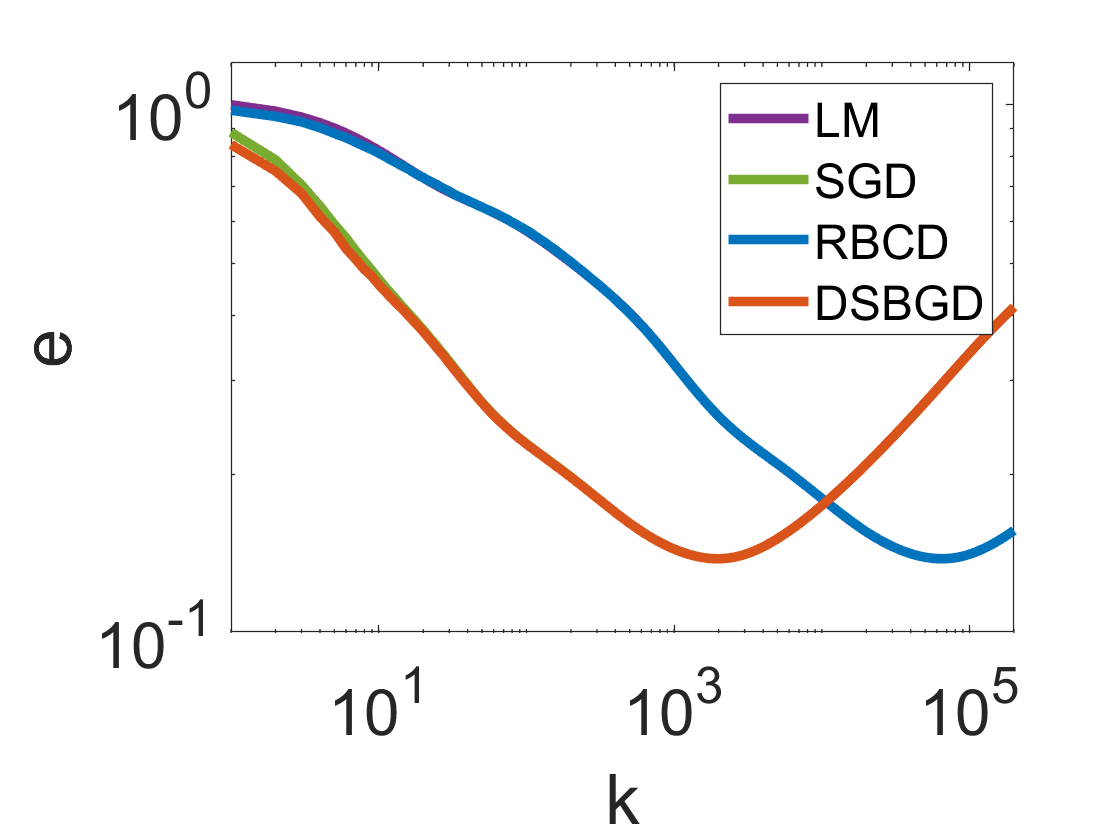}&
    \includegraphics[height=.19\textwidth]{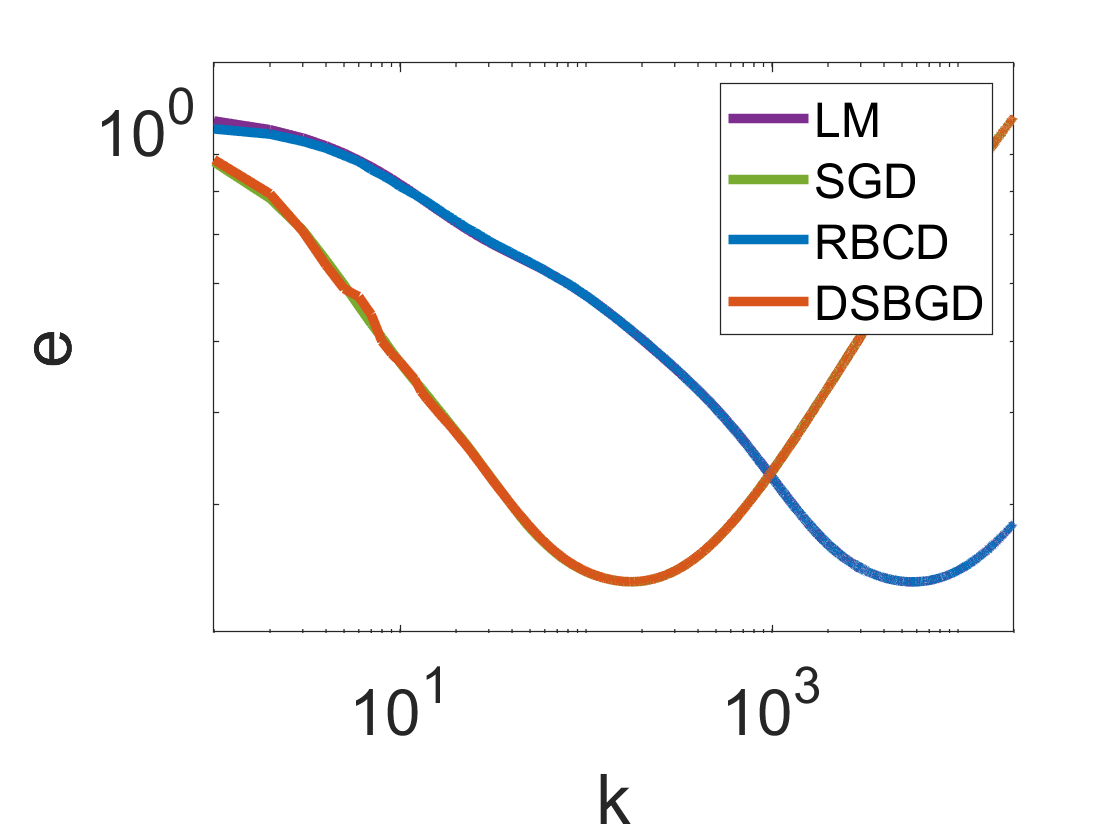}&
    \includegraphics[height=.19\textwidth]{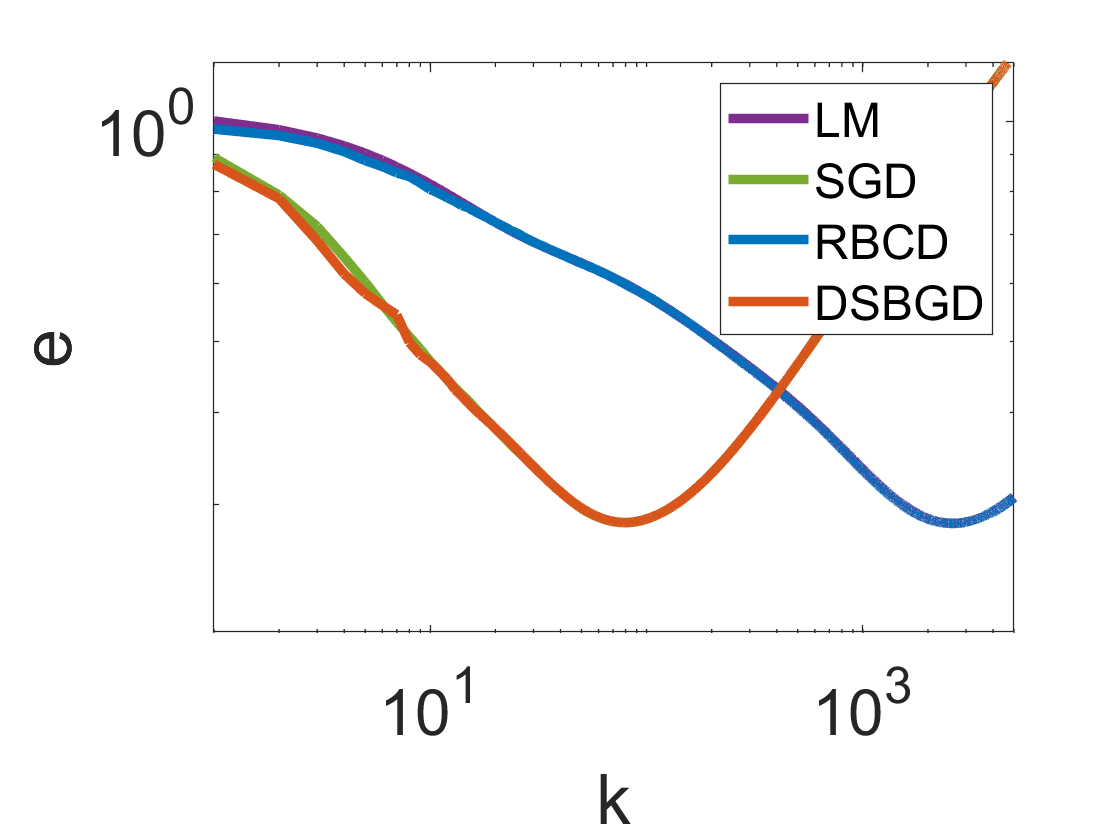}&
    \includegraphics[height=.19\textwidth]{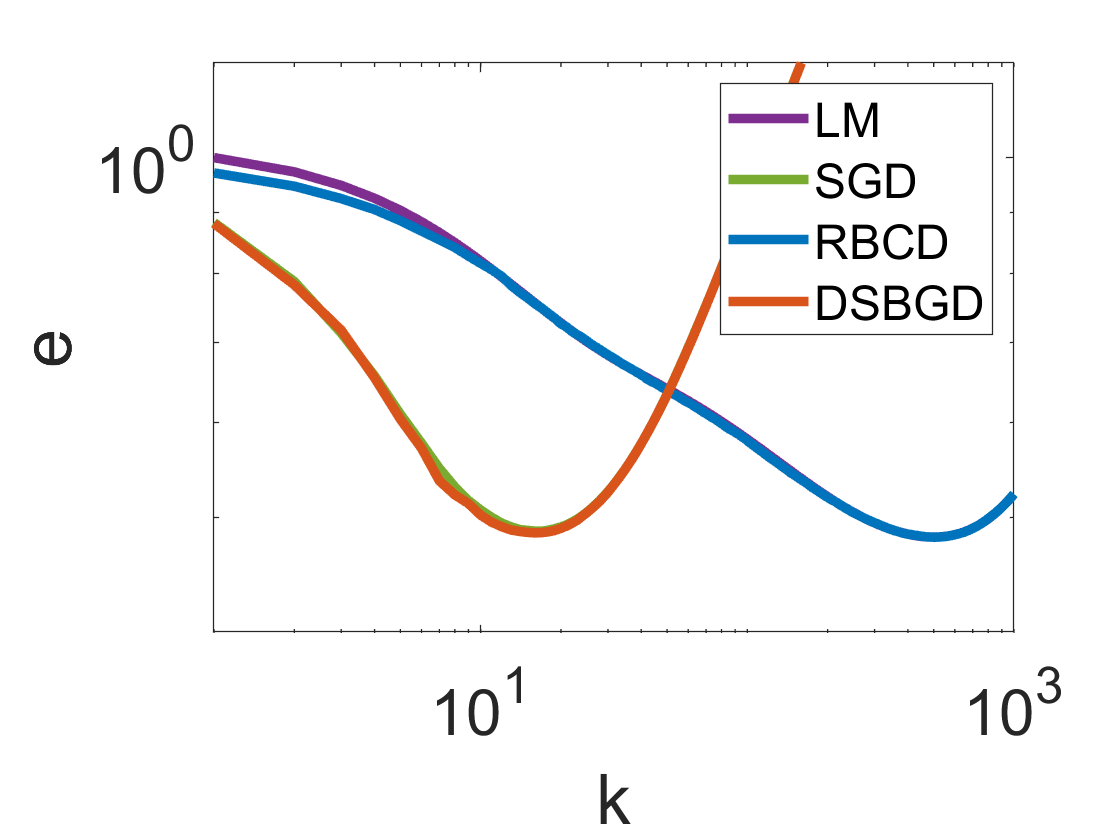}\\
    {\normalsize{(a) $\epsilon = 1e-3$}}&
    {\normalsize{(b) $\epsilon = 5e-3$}}&
    {\normalsize{(c) $\epsilon = 1e-2$}}&
    {\normalsize{(d) $\epsilon = 5e-2$}}
    \end{tabular}
    \caption{The convergence of the relative error $e$ versus the iteration epoch $k$.}
    \label{fig:num_e}
\end{figure}

\appendix
\section{Auxiliary estimates}\label{app:estimate}
In this appendix, we collect a set of supplementary estimates and lengthy technical proofs of several results. 
To establish the finite-iteration termination property under the discrepancy principle \eqref{eqn:discrepancy_1} and derive the convergence rates under \textit{a priori} stopping rules by spectral decomposition, we present several preliminary estimates in the following lemmas.
We first gather several useful estimates from \cite{JinZhouZou:2020,ZhouDSGD:2024}.
\begin{lemma}{\cite[Lemma A.5] {ZhouDSGD:2024}}\label{lem:estimate-B}
For any $\ell<k$, $s\geq 0$ and $Z=X$ or $Y$, there holds
\begin{equation*} \bigg\|\prod_{t=\ell}^kP^Z_tB_Z^{s}\bigg\|=\|\Pi_{t=\ell}^k(I_Z-\eta_t B_Z)B_Z^{s}\| 
\leq \Big(se^{-1}(\sum_{t=\ell}^k\eta_t)^{-1}\Big)^s.
\end{equation*}
Further, under Assumptions \ref{ass:stepsize}(iii), there holds
\begin{equation*} \bigg\|\prod_{t=\ell}^kP^Z_tB_Z^{s}\bigg\|\leq \Big(\frac{sL}{e c_0}\Big)^s(k-\ell+1)^{-s}k^{\alpha s}.
\end{equation*}
\end{lemma}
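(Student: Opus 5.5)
The plan is to reduce everything to scalar functions via the spectral theorem. $B_Z$ is self-adjoint, nonnegative and bounded, and each factor $P^Z_t=I_Z-\eta_tB_Z$ is a polynomial in $B_Z$. Hence the whole operator $\prod_{t=\ell}^kP^Z_tB_Z^s$ is a function of $B_Z$, and
\[
\bigg\|\prod_{t=\ell}^kP^Z_tB_Z^{s}\bigg\|\leq \sup_{\lambda\in[0,\|B_Z\|]}\Big|\prod_{t=\ell}^k(1-\eta_t\lambda)\lambda^s\Big|.
\]
For this to be useful I need $0\le \eta_t\lambda\le 1$ on the spectrum. I would check it directly: since $\|A\|^2\le\sum_i\|A_i\|^2\le n_yL$, we get $\|B_Z\|=n^{-1}\|A\|^2\le L/n_x\le L$. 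Then $\eta_t\|B_Z\|\le c_0<2$, and this is $\le 1$ whenever $c_0\le1$; this holds in the regime $c_0\le\alpha/2$ where the lemma is used. So I will work with $\eta_t\lambda\in[0,1]$; if $c_0\in(1,2)$ were needed, I would instead bound $|1-\eta_t\lambda|\le e^{-\eta_t\lambda}$ only on the part of the spectrum where it holds, and flag this as the delicate point.

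With $0\le 1-\eta_t\lambda\le e^{-\eta_t\lambda}$, the scalar function is bounded by $\lambda^s e^{-\lambda S}$ with $S=\sum_{t=\ell}^k\eta_t$. Elementary calculus shows that $g(\lambda)=\lambda^se^{-\lambda S}$ is maximized at $\lambda=s/S$, with maximum $(s/(eS))^s$; for $s=0$ the bound is $1$ with the convention $0^0=1$. This gives the first inequality.

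For the second inequality I insert Assumption \ref{ass:stepsize}(iii). Since $t^{-\alpha}$ is decreasing, $\eta_t=c_0L^{-1}t^{-\alpha}\ge c_0L^{-1}k^{-\alpha}$ for $t\le k$. Therefore $S\ge c_0L^{-1}(k-\ell+1)k^{-\alpha}$, and substituting into the first bound yields $\big(\tfrac{sL}{ec_0}\big)^s(k-\ell+1)^{-s}k^{\alpha s}$.

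The only real obstacle is justifying $|1-\eta_t\lambda|\le e^{-\eta_t\lambda}$ on the spectrum, i.e., $\eta_t\|B_Z\|\le1$. That is why I verify the bound $\|B_Z\|\le L/n_x$ first.
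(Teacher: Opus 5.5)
Your proof is correct and follows the paper's route. The paper obtains the first bound by citing Lemma A.5 of \cite{ZhouDSGD:2024} with $B_F=B_Z$, $B_G=O_Z$; the standard proof of that lemma is your spectral-calculus argument, namely $0\le 1-\eta_t\lambda\le e^{-\eta_t\lambda}$ followed by maximizing $\lambda^s e^{-\lambda S}$. The paper then derives the second bound exactly as you do, from $\sum_{t=\ell}^k\eta_t\ge c_0L^{-1}(k-\ell+1)k^{-\alpha}$. Your check that $\eta_t\|B_Z\|\le 1$ (via $\|B_Z\|\le L/n_x$) in the regime $c_0\le\alpha/2$ is worth keeping: it makes explicit a stepsize hypothesis that the statement leaves hidden in the citation. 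That hypothesis is genuinely needed, since the first inequality fails when $\eta_t\|B_Z\|$ is close to $2$.
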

\begin{proof}
By the definition of $P^Z_t$, the first assertion in the lemma follows directly from \cite[Lemma A.5]{ZhouDSGD:2024} with $B_F=B_Z$ and $B_G=O_Z$. 
Then Assumption \ref{ass:stepsize}(iii) implies
\begin{align*}
\sum_{t=\ell}^k\eta_t= c_0 L^{-1}\sum_{t=\ell}^kt^{-\alpha}
\geq c_0 L^{-1}(k-\ell+1)k^{-\alpha},
\end{align*}
and hence
\begin{align*}
\bigg\|\prod_{t=\ell}^kP^Z_tB_Z^{s}\bigg\|\leq
\Big(se^{-1}\big(c_0 L^{-1}(k-\ell+1)k^{-\alpha}\big)^{-1}\Big)^s
\leq \Big(\frac{sL}{e c_0}\Big)^s(k-\ell+1)^{-s}k^{\alpha s}.
\end{align*}
This complete the proof of the lemma.
\end{proof}

\begin{lemma}{\cite[Lemma A.6] {ZhouDSGD:2024}}\label{lem:sum}
If $\eta_t=c_0 L^{-1} t^{-\alpha}$, $\alpha\in(0,1)$ and $\beta\in[0,1]$, then there hold
\begin{align}
&\;\;\;\sum_{\ell=1}^{k-1} \frac{\eta_\ell}{(\sum_{t=\ell+1}^k\eta_t)^{\frac12}} \leq
c_0^{\frac12} L^{-\frac12}B(\tfrac12,1-\alpha) k^{\frac{1-\alpha}{2}},\label{eqn:bdd-phi-r}\\
&\;\;\;\sum_{\ell=1}^{[\frac k2]}\frac{\eta_\ell^2}{\sum_{t=\ell+1}^k\eta_t}\ell^{-\beta} \leq c_{\alpha,\beta} k^{-(1-\alpha)+\max(0,1-2\alpha-\beta)},\label{eqn:bdd-phi-r-2}\\
&\sum_{\ell=[\frac k2]+1}^{k-1}\frac{\eta_\ell^2}{\sum_{t=\ell+1}^k\eta_t}\ell^{-\beta}\leq c'_{\alpha,\beta} k^{-(\alpha+\beta)}\ln k,\label{eqn:bdd-phi-r-22}
\end{align}
where we slightly abuse $k^{-\max(0,0)}$ for $\ln k$, $B(\cdot,\cdot)$ denotes the Beta function defined by
\begin{equation*}
B(a,b)=\int_0^1s^{a-1}(1-s)^{b-1}{\rm d} s \;\;\mbox{ for any }a,b>0,
\end{equation*} 
and the constants $c'_{\alpha,\beta}=2^{2\alpha+\beta+1} c_0 L^{-1}$ and 
\begin{align*}
c_{\alpha,\beta} &=2 c_0 L^{-1}\left\{\begin{array}{ll}
1+(2\alpha+\beta-1)^{-1},   & 2\alpha+\beta>1,\\
2,                  & 2\alpha+\beta=1,\\
2^{2\alpha+\beta-1}(1-2\alpha-\beta)^{-1},     & 2\alpha+\beta<1.
\end{array}\right.
\end{align*}
\end{lemma}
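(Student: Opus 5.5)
The statement to prove is Lemma \ref{lem:sum}, the three elementary summation bounds for $\eta_t=c_0L^{-1}t^{-\alpha}$. The approach is to compare each sum with an integral after using monotonicity of $t^{-\alpha}$. A common first step is the lower bound
\begin{equation*}
\sum_{t=\ell+1}^k\eta_t\ge c_0L^{-1}\int_{\ell+1}^{k+1}t^{-\alpha}\,{\rm d}t\ge c_0L^{-1}(k-\ell)k^{-\alpha},
\end{equation*}
together with the sharper integral form $c_0L^{-1}(1-\alpha)^{-1}\big((k+1)^{1-\alpha}-(\ell+1)^{1-\alpha}\big)$ where that is needed.

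\textbf{Bound \eqref{eqn:bdd-phi-r}.}
\begin{itemize}
\item Write the summand as $c_0^{1/2}L^{-1/2}\,\ell^{-\alpha}\big(\sum_{t=\ell+1}^k t^{-\alpha}\big)^{-1/2}$.
\item Use $\sum_{t=\ell+1}^k t^{-\alpha}\ge\int_\ell^k s^{-\alpha}\,{\rm d}s$, which is crude but enough; to avoid a singularity at $s=k$, keep the discrete sum and bound it termwise.
\item Compare the resulting sum over $\ell$ with $\int_0^k s^{-\alpha}\big(\int_s^k u^{-\alpha}\,{\rm d}u\big)^{-1/2}{\rm d}s$.
\item Substitute $v=(s/k)^{1-\alpha}$. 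Then $\int_s^ku^{-\alpha}\,{\rm d}u=(1-\alpha)^{-1}k^{1-\alpha}(1-v)$ and $s^{-\alpha}{\rm d}s=(1-\alpha)^{-1}k^{1-\alpha}\,{\rm d}v$.
\item The integral becomes $(1-\alpha)^{-1/2}k^{(1-\alpha)/2}\int_0^1(1-v)^{-1/2}\,{\rm d}v$.
\end{itemize}
To land exactly on $B(\tfrac12,1-\alpha)$ instead, substitute $s=k\theta$. Using $\sum_{t=\ell+1}^kt^{-\alpha}\ge k^{-\alpha}(k-\ell)$ gives $\int_0^1\theta^{-\alpha}(1-\theta)^{-1/2}\,{\rm d}\theta\,k^{(1-\alpha)/2}=B(1-\alpha,\tfrac12)k^{(1-\alpha)/2}$. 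The monotonicity needed for the sum-to-integral comparison holds because $\ell^{-\alpha}(k-\ell)^{-1/2}$ is comparable to the integrand on $[\ell-1,\ell]$ up to the endpoints. I expect this endpoint bookkeeping to be the main obstacle if one wants constant exactly $1$. I would handle it by comparing $\ell^{-\alpha}\le\int_{\ell-1}^\ell s^{-\alpha}\,{\rm d}s$ and $(k-\ell)^{-1/2}\le(k-s)^{-1/2}$ for $s\le\ell$. Both factors are monotone in the right direction, so the product bound holds cleanly.

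\textbf{Bound \eqref{eqn:bdd-phi-r-2}.}
\begin{itemize}
\item For $\ell\le k/2$, the denominator satisfies $\sum_{t=\ell+1}^k\eta_t\ge c_0L^{-1}(k/2)k^{-\alpha}$.
\item Hence the sum is at most $2c_0L^{-1}k^{-(1-\alpha)}\sum_{\ell\le k/2}\ell^{-2\alpha-\beta}$.
\item The remaining power sum gives the three cases. For $2\alpha+\beta>1$ it is at most $1+(2\alpha+\beta-1)^{-1}$. For $2\alpha+\beta=1$ it is at most $1+\ln(k/2)\le 2\ln k$. For $2\alpha+\beta<1$ it is at most $(1-2\alpha-\beta)^{-1}(k/2)^{1-2\alpha-\beta}$.
\item These cases match $c_{\alpha,\beta}$ and the exponent $\max(0,1-2\alpha-\beta)$, with the stated convention for the log.
\end{itemize}

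\textbf{Bound \eqref{eqn:bdd-phi-r-22}.}
\begin{itemize}
\item For $\ell>k/2$, bound the numerator using $\eta_\ell^2\ell^{-\beta}\le (c_0L^{-1})^2 2^{2\alpha+\beta}k^{-2\alpha-\beta}$.
\item Bound the denominator by $\sum_{t=\ell+1}^k\eta_t\ge c_0L^{-1}(k-\ell)k^{-\alpha}$.
\item The sum is then at most $2^{2\alpha+\beta}c_0L^{-1}k^{-\alpha-\beta}\sum_{m=1}^{k}m^{-1}$.
\item Since $\sum_{m=1}^{k}m^{-1}\le 1+\ln k\le 2\ln k$ for $k\ge3$ (small $k$ are trivial or give empty sums), this yields the constant $c'_{\alpha,\beta}=2^{2\alpha+\beta+1}c_0L^{-1}$.
\end{itemize}
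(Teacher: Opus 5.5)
Your route is the standard argument behind the cited lemma. You bound the denominator below by $c_0L^{-1}(k-\ell)k^{-\alpha}$, split at $[\frac k2]$ and reduce to power or harmonic sums. For \eqref{eqn:bdd-phi-r} you compare $\sum_{\ell=1}^{k-1}\ell^{-\alpha}(k-\ell)^{-1/2}$ with $\int_0^k s^{-\alpha}(k-s)^{-1/2}\,{\rm d}s=k^{\frac12-\alpha}B(1-\alpha,\frac12)$. Your arguments for \eqref{eqn:bdd-phi-r-2} and \eqref{eqn:bdd-phi-r-22} are correct, including the three cases of $c_{\alpha,\beta}$ and the small-$k$ checks.

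\textbf{The gap.} It sits exactly at the step you flagged as the main obstacle in \eqref{eqn:bdd-phi-r}. You assert $(k-\ell)^{-1/2}\le(k-s)^{-1/2}$ for $s\le\ell$. But $s\le\ell$ gives $k-s\ge k-\ell$, hence $(k-s)^{-1/2}\le(k-\ell)^{-1/2}$. The two factors of $f(s):=s^{-\alpha}(k-s)^{-1/2}$ are monotone in \emph{opposite} directions. On $[\ell-1,\ell]$ the first factor helps and the second hurts, and on $[\ell,\ell+1]$ the reverse, so no single unit interval works for every $\ell$. Concretely, $f(\ell)\le\int_{\ell-1}^{\ell}f(s)\,{\rm d}s$ fails at $\ell=k-1$ for large $k$: there $s^{-\alpha}$ is essentially constant, while $\int_1^2u^{-1/2}\,{\rm d}u=2(\sqrt2-1)<1$.

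\textbf{The fix.} The missing idea is convexity. Since $(\ln f)''(s)=\alpha s^{-2}+\frac12(k-s)^{-2}>0$, $f$ is log-convex, hence convex, on $(0,k)$. The midpoint (Hermite--Hadamard) inequality then gives $f(\ell)\le\int_{\ell-1/2}^{\ell+1/2}f(s)\,{\rm d}s$. Summing over $1\le\ell\le k-1$ yields $\sum_{\ell=1}^{k-1}f(\ell)\le\int_0^kf(s)\,{\rm d}s$. Multiplying by $c_0^{1/2}L^{-1/2}k^{\alpha/2}$ gives the stated bound with constant $B(\frac12,1-\alpha)$. Alternatively, use that $f$ decreases and then increases. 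Take left unit intervals for $\ell$ to the left of the minimiser and right unit intervals to its right; these intervals are disjoint.

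\textbf{Two smaller slips.} First, in your opening chain, $\int_{\ell+1}^{k+1}t^{-\alpha}\,{\rm d}t\ge(k-\ell)k^{-\alpha}$ is false (take $\ell=k-1$). The conclusion $\sum_{t=\ell+1}^kt^{-\alpha}\ge(k-\ell)k^{-\alpha}$ still holds directly, because every term is at least $k^{-\alpha}$. Second, in your first route for \eqref{eqn:bdd-phi-r}, the inequality $\sum_{t=\ell+1}^kt^{-\alpha}\ge\int_\ell^ks^{-\alpha}\,{\rm d}s$ is reversed, since each $t^{-\alpha}\le\int_{t-1}^ts^{-\alpha}\,{\rm d}s$. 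That route should be dropped rather than kept as an alternative.
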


\begin{lemma}\label{lem:N}
Let $R: X\rightarrow X$ be a deterministic bounded linear operator. Then for any $t\geq 0$, there holds
\begin{align*}
\E[\|R N_t r_t^\delta\|^2|\mathcal{F}_t]\leq n^{-1}L \|R\|^2 \|r_t^\delta\|^2.
\end{align*}
Further, if Assumption \ref{ass:A} holds,
there holds
\begin{align*}
\E[\|R N_t r_t^\delta\|^2|\mathcal{F}_t]\leq C_A \|R B_X^\frac12\|^2 \|r_t^\delta\|^2.
\end{align*}
\end{lemma}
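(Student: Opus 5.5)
The plan is to treat $N_t r_t^\delta$ as a conditionally mean-zero random vector given $\mathcal{F}_t$ and to bound its conditional second moment by a raw second moment. Since $r_t^\delta$ and $R$ are fixed given $\mathcal{F}_t$, and $j_t,i_t$ are independent and uniform, \eqref{eqn:D} gives $\E[D_{j_t}^{n_x}A^*D_{i_t}^{n_y}r_t^\delta\,|\,\mathcal{F}_t]=n^{-1}A^*r_t^\delta$. Hence $RN_tr_t^\delta$ is the negative of a centered random vector, and
\begin{align*}
\E[\|RN_tr_t^\delta\|^2|\mathcal{F}_t]
&=\E[\|RD_{j_t}^{n_x}A^*D_{i_t}^{n_y}r_t^\delta\|^2|\mathcal{F}_t]-\|n^{-1}RA^*r_t^\delta\|^2\\
&\leq \frac1n\sum_{i=1}^{n_y}\sum_{j=1}^{n_x}\|RD_j^{n_x}A^*D_i^{n_y}r_t^\delta\|^2 .
\end{align*}
Both assertions then reduce to bounding this double sum.

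For the first bound, I pull out $\|R\|$ and use that the $D_j^{n_x}$ are orthogonal projectors onto mutually orthogonal blocks summing to $I_X$. This gives $\sum_j\|D_j^{n_x}v\|^2=\|v\|^2$ with $v=A^*D_i^{n_y}r_t^\delta$. Then $\|A^*D_i^{n_y}\|=\|A_i\|\leq L^{1/2}$ and $\|D_i^{n_y}r_t^\delta\|$ is the $i$th block of $r_t^\delta$, so $\|v\|^2\leq L\|D_i^{n_y}r_t^\delta\|^2$. Summing over $i$ gives $\|r_t^\delta\|^2$, and altogether
\[
\E[\|RN_tr_t^\delta\|^2|\mathcal{F}_t]\leq n^{-1}L\|R\|^2\|r_t^\delta\|^2 .
\]

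For the second bound, I instead keep $R$ attached to $D_j^{n_x}A^*$. I use $\|RD_j^{n_x}A^*D_i^{n_y}r_t^\delta\|\leq\|RD_j^{n_x}A^*\|\,\|D_i^{n_y}r_t^\delta\|$, which is the key step. Then, by $\|T\|^2=\|TT^*\|$ and Assumption \ref{ass:A},
\[
\|RD_j^{n_x}A^*\|^2=\|RD_j^{n_x}A^*AD_j^{n_x}R^*\|\leq c_j\|RA^*AR^*\|=c_j\,n\,\|RB_X^{1/2}\|^2 .
\]
Here the middle inequality uses that $T\preceq S$ implies $RTR^*\preceq RSR^*$, together with the fact that the norm of a positive operator is monotone. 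Summing over $i$ gives $\|r_t^\delta\|^2$, and summing over $j$ gives $\sum_jc_j=C_A$. The factor $n$ cancels the prefactor $n^{-1}$, yielding $C_A\|RB_X^{1/2}\|^2\|r_t^\delta\|^2$.

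The main point needing care is this second estimate. One must not bound $\|RD_j^{n_x}\|$ and $\|A^*\|$ separately, since that loses the $B_X^{1/2}$ smoothing. Instead the operator inequality of Assumption \ref{ass:A} must be applied inside $R(\cdot)R^*$; I would verify this monotonicity explicitly. The rest is bookkeeping with the projectors.
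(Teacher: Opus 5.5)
Your proposal is correct and follows the paper's proof essentially step for step. Both use the same bias--variance identity and drop the squared conditional mean, and for the first bound both use $\|A^*D_i^{n_y}\|^2=\|A_i\|^2\le L$. The only difference is in the second bound: the paper invokes Douglas' lemma to factor $D_j^{n_x}A^*=A^*T_j$ with $\|T_j\|\le\sqrt{c_j}$ and then uses $\|RA^*T_j\|\le\sqrt{c_j}\|RA^*\|$. You instead conjugate the operator inequality of Assumption \ref{ass:A} by $R$ and use $\|T\|^2=\|TT^*\|$, which is equivalent and avoids the factorization.
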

\begin{proof}
The definition $N_t= n^{-1}A^*- D_{j_t}^{n_x} A^* D_{i_t}^{n_y}$, the identity $ n^{-1}A^*=\E[D_{j_t}^{n_x} A^* D_{i_t}^{n_y}|\mathcal{F}_t]$ and the bias-variance decomposition imply
\begin{align}
\E[\|R N_t r_t^\delta\|^2|\mathcal{F}_t]&=\E[\|R (n^{-1}A^*- D_{j_t}^{n_x} A^* D_{i_t}^{n_y}) r_t^\delta\|^2|\mathcal{F}_t]\nonumber\\
&=\E[\|R D_{j_t}^{n_x} A^* D_{i_t}^{n_y} r_t^\delta\|^2|\mathcal{F}_t]-\|n^{-1} R A^* r_t^\delta\|^2 \label{eqn:Nr_1}\\
&\leq n_x^{-1}\|R\|^2 \E[\|A^* D_{i_t}^{n_y}\|^2\|D_{i_t}^{n_y}r_t^\delta\|^2|\mathcal{F}_t]
\leq n^{-1}L \|R\|^2 \|r_t^\delta\|^2.\nonumber
\end{align}
Building on the estimate \eqref{eqn:Nr_1}, we can also bound $\E[\|R N_t r_t^\delta\|^2|\mathcal{F}_t]$ by
\begin{align*}
\E[\|R N_t r_t^\delta\|^2|\mathcal{F}_t]
&\leq \E[\|R D_{j_t}^{n_x} A^*\|^2 \|D_{i_t}^{n_y} r_t^\delta\|^2|\mathcal{F}_t]
\leq n_y^{-1}\E[\|R D_{j_t}^{n_x} A^*\|^2 |\mathcal{F}_t]\|r_t^\delta\|^2.
\end{align*}
Now, we claim that, under Assumption \ref{ass:A}, for any $j=1,\cdots,n_y$, there exists a bounded linear operator $T_j :Y \rightarrow Y$ such that $\|T_j\|\leq \sqrt{c_j}$ and $D_{j}^{n_x} A^*=A^* T_j$.
In fact, according to Douglas’ lemma, for bounded operators $D_{j}^{n_x} A^*$ and $A^*$, $T_j$ exists if and only if ${\rm range}(D_{j}^{n_x} A^*)\subseteq {\rm range}(A^*)$ or 
\begin{align*}
D_{j}^{n_x} A^* A D_{j}^{n_x} =(D_{j}^{n_x} A^*)(D_{j}^{n_x} A^*)^* \preceq c_j (A^*)(A^*)^*=c_j A^* A.
\end{align*}
Consequently, with the definitions $B_X=n^{-1}A^* A$ and $C_A=\sum_{j=1}^{n_x}c_i$, there holds
\begin{align*}
\E[\|R N_t r_t^\delta\|^2|\mathcal{F}_t]
&\leq n_y^{-1}\E[\|R A^* T_{j_t}\|^2 |\mathcal{F}_t]\|r_t^\delta\|^2
\leq n^{-1} \sum_{j=1}^{n_x}c_i\|R A^*\|^2 \|r_t^\delta\|^2 
= C_A \|R B_X^\frac12\|^2 \|r_t^\delta\|^2.
\end{align*}
This completes the proof of the lemma.
\end{proof}

\subsection{Proof of Lemma \ref{lem:r_k^delta}}\label{app:lem:r_k^delta}

In this part, we provide the proof of Lemma \ref{lem:r_k^delta}.
By the definitions of $P^Y_k$, $N_k$ and $r_k^\delta=Ae_k^\delta -\xi$, it follows from the recursion \eqref{eqn:e_k^delta} that
\begin{align}\label{eqn:recursion_r_0}
r_{k+1}^\delta
= r_k^\delta- \eta_k A D_{j_k}^{n_x} A^* D_{i_k}^{n_y} r_k^\delta
= P^Y_k r_k^\delta +\eta_k A N_k r_k^\delta.
\end{align}
Iterating the above recurrence gives
\begin{align}\label{eqn:recursion_r}
r_{k+1}^\delta
= \prod_{t=k-1}^k P^Y_t r_{k-1}^\delta +\sum_{\ell=k-1}^k \eta_\ell \prod_{t=\ell+1}^k P^Y_t A N_\ell r_\ell^\delta
= \prod_{t=1}^k P^Y_t r_{1}^\delta +\sum_{\ell=1}^k \eta_\ell \prod_{t=\ell+1}^k P^Y_t A N_\ell r_\ell^\delta,
\end{align}
with the initial residual $r_1^\delta=Ae_1^\delta-\xi$ and the convention $\prod_{t=k+1}^k P^Y_t=I_Y$.
Then, by the bias-variance decomposition, the Cauchy-Schwarz inequality and the identity $\E[\langle N_{t}r_t^\delta, N_{\ell}r_\ell^\delta\rangle|\mathcal{F}_t]=0$ for any $t>\ell$, we can bound the residual by
\begin{align}\label{eqn:residual}
\E[\|r_{k+1}^\delta\|^2]
&=\E\bigg[\bigg\|\prod_{t=1}^k P^Y_t Ae_1^\delta-\prod_{t=1}^k P^Y_t\xi +\sum_{\ell=1}^k \eta_\ell \prod_{t=\ell+1}^k P^Y_t A N_\ell r_\ell^\delta\bigg\|^2\bigg]\nonumber\\
&\leq 2\bigg\|\prod_{t=1}^k P^Y_t\xi\bigg\|^2+2\bigg\|\prod_{t=1}^k P^Y_t Ae_1^\delta\bigg\|^2+\sum_{\ell=1}^k\eta_\ell^2\E\bigg[\bigg\|  \prod_{t=\ell+1}^k P^Y_t A N_\ell r_\ell^\delta\bigg\|^2\bigg] \leq 2\delta^2 + 2\mathfrak{I}_1+ \mathfrak{I}_2,
\end{align}
where $\mathfrak{I}_1=\big\|\prod_{t=1}^k P^Y_t Ae_1^\delta\big\|^2\leq n\big\|\prod_{t=1}^k P^Y_t B_Y^\frac12\big\|^2\|e_1^\delta\|^2$ 
and
$\mathfrak{I}_2=\sum_{\ell=1}^k\eta_\ell^2\E\big[\big\|  \prod_{t=\ell+1}^k P^Y_t A N_\ell r_\ell^\delta\big\|^2\big]$. 
Note that $\mathfrak{I}_2=\E[\|r_k^\delta-\E[r_k^\delta]\|^2]$.
Next, Lemmas \ref{lem:estimate-B} and \ref{lem:N} and the inequality $k+1\leq 2k$ yield
\begin{align}
\mathfrak{I}_1&\leq n\bigg\|\prod_{t=1}^k P^Y_t B_Y^\frac12\bigg\|^2\|e_1^\delta\|^2
\leq \frac{nL\|e_1^\delta\|^2}{2e c_0}k^{-(1-\alpha)}
\leq \frac{nL\|e_1^\delta\|^2}{2^{\alpha} e c_0}(k+1)^{-(1-\alpha)},\label{eqn:I_1}\\
\mathfrak{I}_2&\leq n^{-1}L\sum_{\ell=1}^k\eta_\ell^2\bigg\|  \prod_{t=\ell+1}^k P^Y_t A\bigg\|^2 \E[\|r_\ell^\delta\|^2]
=L\sum_{\ell=1}^k\eta_\ell^2\bigg\|  \prod_{t=\ell+1}^k P^Y_t B_Y^\frac12\bigg\|^2 \E[\|r_\ell^\delta\|^2]:=\mathfrak{I}'_2\label{eqn:I_2_for_rate}\\
&\leq \frac{\|A\|^2L}{n} \eta_k^2\E[\|r_k^\delta\|^2]+\frac{L}{2e} \sum_{\ell=1}^{k-1}\frac{\eta_\ell^2}{\sum_{t=\ell+1}^k\eta_t}\E[\|r_\ell^\delta\|^2]\nonumber.
\end{align}
The rest of the proof proceeds by mathematical induction.
For the case $k=1$, $\|r_1^\delta\|\leq \|Ae_1^\delta\|+\delta$, and thus the estimate \eqref{eqn:r_k^delta} holds for any sufficiently large $c_1^*\geq n^{-1}\|Ae_1^\delta\|^2$ and $c_2^{*}\geq 0$. 
Now, we assume that the bound
holds up to case with sufficiently large $k$, and prove the assertion for case $k+1$.
Under Assumption \ref{ass:stepsize}(iii), with the estimate $\E[\|r_\ell^\delta\|^2]\leq nc_1^* \ell^{-(1-\alpha)}+nc_2^*\delta^2$ for any $\ell\leq k$, the inequality $\|A\|^2=\sum_{i=1}^{n_y}\|A_i\|^2\leq n_yL$ and Lemma \ref{lem:sum}, we derive
\begin{align}\label{eqn:I_2}
\mathfrak{I}_2&\leq \mathfrak{I}'_2 \leq \frac{c_0^2\|A\|^2}{nL}  k^{-2\alpha} \E[\|r_k^\delta\|^2]+\frac{L}{2e} \sum_{\ell=1}^{k-1}\frac{\eta_\ell^2}{\sum_{t=\ell+1}^k\eta_t}\E[\|r_\ell^\delta\|^2]\nonumber\\
&\leq \frac{c_0^2}{n_x}  k^{-2\alpha} (nc_1^* k^{-(1-\alpha)}+nc_2^*\delta^2)
+\frac{L}{2e} \sum_{\ell=1}^{k-1}\frac{\eta_\ell^2}{\sum_{t=\ell+1}^k\eta_t}(nc_1^* \ell^{-(1-\alpha)}+nc_2^* \delta^2)\nonumber\\
&\leq \frac{c_0^2}{n_x}   (nc_1^* k^{-(1+\alpha)}+nc_2^*k^{-2\alpha}\delta^2)
+\frac{L}{2e}(nc_1^* \mathfrak{I}_{2,1}+nc_2^*\delta^2 \mathfrak{I}_{2,2}),
\end{align}
where 
\begin{align}
\mathfrak{I}_{2,1}&=\sum_{\ell=1}^{k-1}\frac{\eta_\ell^2}{\sum_{t=\ell+1}^k\eta_t}\ell^{-(1-\alpha)}
\leq
2c_0L^{-1}(1+\alpha^{-1})k^{-(1-\alpha)}+2^{2+\alpha}c_0L^{-1}k^{-1}\ln k,\label{eqn:I_21}\\
\mathfrak{I}_{2,2}&=\sum_{\ell=1}^{k-1}\frac{\eta_\ell^2}{\sum_{t=\ell+1}^k\eta_t}
\leq 2^{1+2\alpha}c_0 L^{-1} k^{-\alpha}\ln k\nonumber\\
&\qquad\qquad\qquad\qquad\;\; + c_0L^{-1} 
\begin{cases}
2^2 k^{-\frac12}\ln k, & \mbox{when}\; \alpha = \frac12,\\
\max\big(\tfrac{2^{2\alpha}}{1-2\alpha}k^{-\alpha},2(1+\tfrac{1}{2\alpha-1})k^{-(1-\alpha)}\big), & \mbox{otherwise}.
\end{cases}
\label{eqn:I_22}
\end{align}
Then, by using the inequality, for any $s>0$ and $k\geq 1$,
\begin{equation}\label{eqn:bdd-log}
k^{-s}\ln k \leq (es)^{-1},
\end{equation}
we can further bound $\mathfrak{I}_{2,i}$ by
\begin{align}
\mathfrak{I}_{2,1}&
\leq 2\big(1+\alpha^{-1}+2^{1+\alpha}k^{-\alpha}\ln k\big)c_0L^{-1}k^{-(1-\alpha)}
\leq 2\big(1+(1+2^{1+\alpha}e^{-1})\alpha^{-1}\big)c_0L^{-1}k^{-(1-\alpha)},\label{eqn:I_21_2}\\
\mathfrak{I}_{2,2}&
\leq 2^{1+2\alpha}c_0 L^{-1} (k^{-\frac{\alpha}{2}}\ln k)k^{-\frac{\alpha}{2}}+ c_0L^{-1} 
\begin{cases}
2^2 (k^{-\frac14}\ln k)k^{-\frac14}, & \mbox{when}\; \alpha = \frac12\\
\max\big(\tfrac{2^{2\alpha}}{1-2\alpha}k^{-\alpha},2(1+\tfrac{1}{2\alpha-1})k^{-(1-\alpha)}\big), & \mbox{otherwise}
\end{cases}\nonumber\\
&\leq 2^{2+2\alpha}e^{-1}c_0 \alpha^{-1} L^{-1} k^{-\frac{\alpha}{2}}+ c_0L^{-1} 
\begin{cases}
2^4 e^{-1} k^{-\frac14}, & \mbox{when}\; \alpha = \frac12,\\
\max\big(\tfrac{2^{2\alpha}}{1-2\alpha}k^{-\alpha},2(1+\tfrac{1}{2\alpha-1})k^{-(1-\alpha)}\big), & \mbox{otherwise}.
\end{cases}
\nonumber
\end{align}
Together with the identity $k+1=\frac{k+1}{k}k$, for any 
sufficiently large $k$, $c_1^*$ and $c_2^*$, and $c_0 \leq \alpha/2$ which implies
$$c_0\leq\alpha/2=\frac{e}{e/2}\big(4\alpha^{-1}\big)^{-1}<\frac{e}{e/2}\big((2+2^{2}e^{-1})\alpha^{-1}\big)^{-1}<\frac{e}{e/2}\big(1+(1+2^{1+\alpha}e^{-1})\alpha^{-1}\big)^{-1},$$ 
it follows from \eqref{eqn:I_2} that
\begin{align}\label{eqn:I_2_f}
\mathfrak{I}_2\leq \tfrac{2}{e} n c_1^* (k+1)^{-(1-\alpha)}+\tfrac12 n c_2^* \delta^2.
\end{align}
Then substituting the estimates \eqref{eqn:I_1} and \eqref{eqn:I_2_f} into the inequality \eqref{eqn:residual} yields
\begin{align*}
\E[\|r_{k+1}^\delta\|^2]
&\leq 2\delta^2 + 2\mathfrak{I}_1+ \mathfrak{I}_2
\leq 2\delta^2+\frac{2^{1-\alpha}nL\|e_1^\delta\|^2}{e c_0}(k+1)^{-(1-\alpha)}+ \tfrac{2}{e} n c_1^* (k+1)^{-(1-\alpha)}+\tfrac12 n c_2^* \delta^2\\
&\leq n c_1^* (k+1)^{-(1-\alpha)}+n c_2^* \delta^2,
\end{align*}
for sufficiently large $c_1^*\geq \frac{2^{1-\alpha}L\|e_1^\delta\|^2}{(e-2) c_0}$ and $c_2^*\geq \frac{4}{n}$.
This gives the estimate \eqref{eqn:r_k^delta}.

Next, we discuss the mean residual
$\|\E[r_{k+1}^\delta]\|$. With the recursion \eqref{eqn:recursion_r}, the triangle inequality and the estimate \eqref{eqn:I_1}, we obtain
\begin{align*}
\|\E[r_{k+1}^\delta]\|
&= \bigg\|\prod_{t=1}^k P^Y_t r_{1}^\delta\bigg\|= \bigg\|\prod_{t=1}^k P^Y_t (Ae_{1}^\delta-\xi)\bigg\|
\leq \bigg\|\prod_{t=1}^k P^Y_t Ae_{1}^\delta\bigg\|+\bigg\|\prod_{t=1}^k P^Y_t \xi\bigg\|\\
&\leq \sqrt{\mathfrak{I}_1}+\delta
\leq \sqrt{\tfrac{nL}{2^\alpha e c_0}}\|e_1^\delta\|(k+1)^{-\frac{1-\alpha}{2}}+\delta,
\end{align*}
which yields the estimate \eqref{eqn:E[r_k^delta]}.
Finally, we derive the bound of $\mathfrak{I}_2=\E[\|r_k^\delta-\E[r_k^\delta]\|^2]$ in \eqref{eqn:r_k^delta_var} from \eqref{eqn:I_2}, \eqref{eqn:I_22} and \eqref{eqn:I_21_2}.
It follows directly from \eqref{eqn:I_22} and the inequality \eqref{eqn:bdd-log} that
\begin{align*}
\mathfrak{I}_{2,2}&
\leq c_0 L^{-1}k^{-\frac{\alpha}{2}}\left(2^{3} (k^{-\frac{\alpha}{4}}\ln k)k^{-\frac{\alpha}{4}} +
\begin{cases}
2^2 (k^{-\frac18}\ln k)k^{-\frac18}, & \mbox{when}\; \alpha = \frac12\\
\max(\tfrac{2}{1-2\alpha}k^{-\frac{\alpha}{2}},\tfrac{2^2}{2\alpha-1}k^{-(1-\frac32\alpha)}),  &\mbox{otherwise}  
\end{cases}
\right)\\
&\leq c_0 L^{-1}k^{-\frac{\alpha}{2}}\left(2^{4} \alpha^{-1}k^{-\frac{\alpha}{4}}+
\begin{cases}
2^4 k^{-\frac18}, & \mbox{when}\; \alpha = \frac12\\
\max(\tfrac{2}{1-2\alpha}k^{-\frac{\alpha}{2}},\tfrac{2^2}{2\alpha-1}k^{-(1-\frac32\alpha)}),  &\mbox{otherwise}  
\end{cases}
 \right).
\end{align*}
Then with the identity $k+1=\frac{k+1}{k}k$, for any $\alpha\in(0,\frac23)$, 
$c_0 \leq \alpha/2$, and sufficiently large $k$, $c_1^*$ and $c_2^*$, the decomposition \eqref{eqn:I_2} gives
\begin{align*}
\mathfrak{I}_2\leq n c_1^* (k+1)^{-(1-\alpha)}+n c_2^* (k+1)^{-\frac{\alpha}{2}}\delta^2.
\end{align*}
This completes the proof of the lemma.

\bibliographystyle{abbrv}
\bibliography{sgd}

@article{BrahmaKolbitsch:2023,
author = {Brahma, Sherine and Kolbitsch, Christoph and Martin, Joerg and Schaeffter, Tobias and Kofler, Andreas},
title = {Data-efficient Bayesian learning for radial dynamic MR reconstruction},
journal = {Medical Physics},
volume = {50},
number = {11},
pages = {6955-6977},
doi = {https://doi.org/10.1002/mp.16543},
url = {https://aapm.onlinelibrary.wiley.com/doi/abs/10.1002/mp.16543},
eprint = {https://aapm.onlinelibrary.wiley.com/doi/pdf/10.1002/mp.16543},
year = {2023}
}

@article{CaraccioloCaiafa:2022,
doi = {10.1088/2057-1976/ac835b},
url = {https://doi.org/10.1088/2057-1976/ac835b},
year = {2022},
publisher = {IOP Publishing},
volume = {8},
number = {6},
pages = {065010},
author = {Caracciolo, S F and Caiafa, C F and Martínez Pería, F D and Arini, P D},
title = {A fast algorithm for spatiotemporal signals recovery using arbitrary dictionaries with application to electrocardiographic imaging},
journal = {Biomedical Physics \& Engineering Express}
}

@article{Zhang:2017,
doi = {10.1088/1361-6560/aa7122},
url = {https://doi.org/10.1088/1361-6560/aa7122},
year = {2017},
month = {jun},
publisher = {IOP Publishing},
volume = {62},
number = {13},
pages = {5556},
author = {Zhang, Houjin and Zeng, Dong and Lin, Jiahui and Zhang, Hao and Bian, Zhaoying and Huang, Jing and Gao, Yuanyuan and Zhang, Shanli and Zhang, Hua and Feng, Qianjin and Liang, Zhengrong and Chen, Wufan and Ma, Jianhua},
title = {Iterative reconstruction for dual energy CT with an average image-induced nonlocal means regularization},
journal = {Physics in Medicine \& Biology}
}

@article{PerelliAndersen:2021,
    author = {Perelli, Alessandro and Andersen, Martin S.},
    title = {Regularization by denoising sub-sampled Newton method for spectral CT multi-material decomposition},
    journal = {Philosophical Transactions of the Royal Society A: Mathematical, Physical and Engineering Sciences},
    volume = {379},
    number = {2200},
    pages = {20200191},
    year = {2021},
    month = {05},
    issn = {1364-503X},
    doi = {10.1098/rsta.2020.0191},
    url = {https://doi.org/10.1098/rsta.2020.0191},
    eprint = {https://royalsocietypublishing.org/rsta/article-pdf/doi/10.1098/rsta.2020.0191/373300/rsta.2020.0191.pdf},
}

@ARTICLE{LiRavishankarLongFessler:2020,
  author={Li, Zhipeng and Ravishankar, Saiprasad and Long, Yong and Fessler, Jeffrey A.},
  journal={IEEE Transactions on Medical Imaging}, 
  title={DECT-MULTRA: Dual-Energy CT Image Decomposition With Learned Mixed Material Models and Efficient Clustering}, 
  year={2020},
  volume={39},
  number={4},
  pages={1223-1234},
  doi={10.1109/TMI.2019.2946177}}

@article{Nesterov：2012,
  title={Efficiency of Coordinate Descent Methods on Huge-Scale Optimization Problems},
  author={Yurii Nesterov},
  journal={SIAM J. Optim.},
  year={2012},
  volume={22},
  pages={341-362},
  url={https://api.semanticscholar.org/CorpusID:1424102}
}

@article{PoudelLouAnastasio:2019,
    doi = {10.1088/1361-6560/ab2017},
    url = {https://doi.org/10.1088/1361-6560/ab2017},
    year = {2019},
    publisher = {IOP Publishing},
    volume = {64},
    number = {14},
    pages = {14TR01},
    author = {Poudel, Joemini and Lou, Yang and Anastasio, Mark A},
    title = {A survey of computational frameworks for solving the acoustic inverse problem in three-dimensional photoacoustic computed tomography},
    journal = {Physics in Medicine \& Biology}}

@article {XiaWangHan:2022,
    AUTHOR = {Xia, Yuxin and Wang, Wei and Han, Bo},
     TITLE = {A fast averaged {K}aczmarz iteration with convex penalty for
              inverse problems in {H}ilbert spaces},
   JOURNAL = {SIAM J. Imaging Sci.},
  FJOURNAL = {SIAM Journal on Imaging Sciences},
    VOLUME = {15},
      YEAR = {2022},
    NUMBER = {3},
     PAGES = {1079--1103},
      ISSN = {1936-4954},
   MRCLASS = {65J20 (47A52 65J22)},
  MRNUMBER = {4452955},
MRREVIEWER = {Stefan\ Kindermann},
       DOI = {10.1137/21M1445181},
       URL = {https://doi.org/10.1137/21M1445181},
}

@article{RabanserNeumannHaltmeier:2019,
author = {Rabanser, Simon and Neumann, Lukas and Haltmeier, Markus},
title = {Analysis of the Block Coordinate Descent Method for Linear Ill-Posed Problems},
journal = {SIAM Journal on Imaging Sciences},
volume = {12},
number = {4},
pages = {1808-1832},
year = {2019},
doi = {10.1137/19M1243956},

URL = { 
    
        https://doi.org/10.1137/19M1243956
    
    

},
eprint = { 
    
        https://doi.org/10.1137/19M1243956
    
    

}
}

@article{BaoGuoXingLi:2026,
author = {Bao, Wendi and Guo, Zhiwei and Xing, Lili and Li, Weiguo},
title = {On stochastic block methods for solving nonlinear equations},
journal = {Numerical Algorithms},
year = {2026},
volume = {101},
number = {2},
pages = {915--931},
doi = {10.1007/s11075-025-02027-7},
issn = {1017-1398}
}

@article{JinLiu:2025,
doi = {10.1088/1361-6420/adb780},
url = {https://doi.org/10.1088/1361-6420/adb780},
year = {2025},
publisher = {IOP Publishing},
volume = {41},
number = {3},
pages = {035012},
author = {Jin, Qinian and Liu, Duo},
title = {Randomized block coordinate descent method for linear ill-posed problems},
journal = {Inverse Problems}
}

@article{JinZhouZou:2020,
    author = {Bangti Jin and Zehui Zhou and Jun Zou},
     title = {On the Convergence of Stochastic Gradient Descent for Nonlinear Ill-Posed Problems},
   journal = {SIAM J. Optim.},
      year = {2020},
    volume = {30},
    number = {2},
     pages = {1421-1450},
}

@article{EhrhardtKereta:2025,
title={A Guide to Stochastic Optimisation for Large-Scale Inverse Problems}, 
author={Matthias J. Ehrhardt and Zeljko Kereta and Jingwei Liang and Junqi Tang},
year={2025},
journal ={Inverse Prolems},
volume = {41},
number = {5},
pages = {053001, 61 pp.},
}

@article{JinZhou:2026,
doi = {10.1088/1361-6420/ae5845},
url = {https://doi.org/10.1088/1361-6420/ae5845},
year = {2026},
publisher = {IOP Publishing},
volume = {42},
number = {4},
pages = {045006},
author = {Jin, Bangti and Zhou, Zehui},
title = {On the convergence of stochastic variance reduced gradient for linear inverse problems},
journal = {Inverse Problems}
}

@incollection{JinXiaZhou:2025,
   author = {Jin, Bangti and Xia, Yuxin and Zhou, Zehui},
    title = {On the regularizing property of stochastic iterative
methods for solving inverse problems},
    booktitle = {Handbook of Numerical Analysis},
    volume = {26},
    publisher = {Elsevier, Amsterdam},
    year = {2025},
    pages = {211--272},
    issn = {1570-8659},
    doi = {https://doi.org/10.1016/bs.hna.2025.09.004},
    url = {https://www.sciencedirect.com/science/article/pii/S1570865925000043}
}

@article{P.C.Hansen2007,
    author = {P. C. Hansen},
     title = {Regularization Tools version 4.0 for {MATLAB} 7.3},
   journal = {Numer. Algorithms},
      year = {2007},
    volume = {46},
    number = {2},
     pages = {189-194},
}

@article {Morozov:1966,
    AUTHOR = {Morozov, V. A.},
     TITLE = {On the solution of functional equations by the method of
              regularization},
    journal = {Soviet Math. Dokl.},
    fjournal = {Soviet Mathematics. Doklady},
     volume = {7},
     year = {1966},
     PAGES = {414--417},
   MRCLASS = {65.35},
  MRNUMBER = {208819},
MRREVIEWER = {P.\ Hartman},
}

@article {HermanLentLutz:1978,
    AUTHOR = {Herman, Gabor T. and Lent, Arnold and Lutz, Peter H.},
     TITLE = {Relaxation method for image reconstruction},
   JOURNAL = {Comm. ACM},
  FJOURNAL = {Communications of the Association for Computing Machinery},
    VOLUME = {21},
      YEAR = {1978},
    NUMBER = {2},
     PAGES = {152--158},
      ISSN = {0001-0782},
   MRCLASS = {68A45 (92A05)},
  MRNUMBER = {0502336},
MRREVIEWER = {J. J. Buckley},
       DOI = {10.1145/359340.359351},
       URL = {https://doi.org/10.1145/359340.359351},
}

@article {Landweber:1951,
    AUTHOR = {Landweber, L.},
     TITLE = {An iteration formula for {F}redholm integral equations of the
              first kind},
   JOURNAL = {Amer. J. Math.},
  FJOURNAL = {American Journal of Mathematics},
    VOLUME = {73},
      YEAR = {1951},
     PAGES = {615--624},
      ISSN = {0002-9327},
   MRCLASS = {45.0X},
  MRNUMBER = {0043348},
MRREVIEWER = {G. E. H. Reuter},
       DOI = {10.2307/2372313},
       URL = {https://doi.org/10.2307/2372313},
}

@inproceedings{SafranShamir:2020,
   author = {Safran, Itay and Shamir, Ohad},
    title = {How good is {SGD} with random shuffling?},
     year = {2020},
   booktitle = {Proceedings of Conference on Learning Theory 2020, PMLR},
   editor = { Abernethy, Jacob and Agarwal, Shivani},
   volume = {125},
    pages = {3250--3284},
}

@inproceedings{Haochen:2019,
   author = {{HaoChen}, Jeff and Sra, Suvrit},
    title = {Random shuffling beats {SGD} after finite epochs},
  booktitle ={Proceedings of the 36th International Conference on Machine Learning, PMLR},
  editor = {Chaudhuri, Kamalika and Salakhutdinov, Ruslan},
      year = {2019},
    volume = {97},
     pages = {2624--2633},
}

@article{Ying:2018,
   author = {Ying, Bicheng and Yuan, Kun and Vlaski, Stefan and Sayed, Ali H},
    title = {Stochastic learning under random reshuffling with constant step-sizes},
  journal = {IEEE Trans. Signal Proc.},
     year = {2018},
   volume = {67},
   number = {2},
    pages = {474--489},
}

@article {HankeNeubauerScherzer:1995,
    AUTHOR = {Hanke, Martin and Neubauer, Andreas and Scherzer, Otmar},
     TITLE = {A convergence analysis of the {L}andweber iteration for
              nonlinear ill-posed problems},
   JOURNAL = {Numer. Math.},
  FJOURNAL = {Numerische Mathematik},
    VOLUME = {72},
      YEAR = {1995},
    NUMBER = {1},
     PAGES = {21--37},
      ISSN = {0029-599X},
   MRCLASS = {65J20 (47H17)},
  MRNUMBER = {1359706},
MRREVIEWER = {Hans-J\"urgen Reinhardt},
       URL = {https://doi.org/10.1007/s002110050158},
}

@book {EnglHankeNeubauer:1996,
    AUTHOR = {Engl, Heinz W. and Hanke, Martin and Neubauer, Andreas},
     TITLE = {Regularization of {I}nverse {P}roblems},
 PUBLISHER = {Kluwer, Dordrecht},
      YEAR = {1996},
     PAGES = {viii+321},
      ISBN = {0-7923-4157-0},
   MRCLASS = {65J20 (35R30 47A50 47H17)},
  MRNUMBER = {1408680},
MRREVIEWER = {Ulrich Tautenhahn},
       DOI = {10.1007/978-94-009-1740-8},
       URL = {http://dx.doi.org/10.1007/978-94-009-1740-8},
}

@article {RobbinsMonro:1951,
     AUTHOR = {Robbins, Herbert and Monro, Sutton},
     TITLE = {A stochastic approximation method},
   JOURNAL = {Ann. Math. Stat.},
  FJOURNAL = {Annals of Mathematical Statistics},
    VOLUME = {22},
      YEAR = {1951},
     PAGES = {400--407},
      ISSN = {0003-4851},
   MRCLASS = {62.0X},
  MRNUMBER = {0042668},
MRREVIEWER = {R. P. Peterson},
}

@article{ZhouDSGD:2024,
author = {Zhou, Zehui},
title = {On the convergence of a data-driven regularized stochastic gradient descent for nonlinear ill-posed problems},
journal = {SIAM J. Imaging Sci.},
volume = {18},
number = {1},
pages = {388-448},
year = {2025},
doi = {10.1137/24M1647394},
URL = {https://doi.org/10.1137/24M1647394},
eprint = {https://doi.org/10.1137/24M1647394},
}

@article {JinLu:2019,
    AUTHOR = {Jin, Bangti and Lu, Xiliang},
     TITLE = {On the regularizing property of stochastic gradient descent},
   JOURNAL = {Inverse Problems},
  FJOURNAL = {Inverse Problems. An International Journal on the Theory and
              Practice of Inverse Problems, Inverse Methods and Computerized
              Inversion of Data},
    VOLUME = {35},
      YEAR = {2019},
    NUMBER = {1},
     PAGES = {015004, 27 pp.},
      ISSN = {0266-5611,1361-6420},
   MRCLASS = {65F22 (65K05 90C90)},
  MRNUMBER = {3884601},
MRREVIEWER = {Silvia\ Gazzola},
       DOI = {10.1088/1361-6420/aaea2a},
       URL = {https://doi.org/10.1088/1361-6420/aaea2a},
}

@article {LuMathe2022,
    AUTHOR = {Lu, Shuai and Math\'e, Peter},
     TITLE = {Stochastic gradient descent for linear inverse problems in
              {H}ilbert spaces},
   JOURNAL = {Math. Comp.},
  FJOURNAL = {Mathematics of Computation},
    VOLUME = {91},
      YEAR = {2022},
    NUMBER = {336},
     PAGES = {1763--1788},
      ISSN = {0025-5718,1088-6842},
   MRCLASS = {65J20 (47A52 65F22)},
  MRNUMBER = {4435947},
MRREVIEWER = {Uno\ H\"amarik},
       DOI = {10.1090/mcom/3714},
       URL = {https://doi.org/10.1090/mcom/3714},
}

@article {JahnJin:2020,
    AUTHOR = {Jahn, Tim and Jin, Bangti},
     TITLE = {On the discrepancy principle for stochastic gradient descent},
   JOURNAL = {Inverse Problems},
  FJOURNAL = {Inverse Problems. An International Journal on the Theory and
              Practice of Inverse Problems, Inverse Methods and Computerized
              Inversion of Data},
    VOLUME = {36},
      YEAR = {2020},
    NUMBER = {9},
     PAGES = {095009, 30 pp.},
      ISSN = {0266-5611,1361-6420},
   MRCLASS = {65F22 (62L20)},
  MRNUMBER = {4149878},
MRREVIEWER = {Xiangjian\ Xu},
       DOI = {10.1088/1361-6420/abaa58},
       URL = {https://doi.org/10.1088/1361-6420/abaa58},
}

@article {JinZhouZou:2021siuq,
    AUTHOR = {Jin, Bangti and Zhou, Zehui and Zou, Jun},
     TITLE = {On the saturation phenomenon of stochastic gradient descent
              for linear inverse problems},
   JOURNAL = {SIAM/ASA J. Uncertain. Quantif.},
  FJOURNAL = {SIAM/ASA Journal on Uncertainty Quantification},
    VOLUME = {9},
      YEAR = {2021},
    NUMBER = {4},
     PAGES = {1553--1588},
      ISSN = {2166-2525},
   MRCLASS = {65J20 (65J22 90C90)},
  MRNUMBER = {4333672},
MRREVIEWER = {Jiu\ Ding},
       DOI = {10.1137/20M1374456},
       URL = {https://doi.org/10.1137/20M1374456},
}

@article {JinZhouZou:2022ip,
    AUTHOR = {Jin, Bangti and Zhou, Zehui and Zou, Jun},
     TITLE = {An analysis of stochastic variance reduced gradient for linear
              inverse problems},
   JOURNAL = {Inverse Problems},
  FJOURNAL = {Inverse Problems. An International Journal on the Theory and
              Practice of Inverse Problems, Inverse Methods and Computerized
              Inversion of Data},
    VOLUME = {38},
      YEAR = {2022},
    NUMBER = {2},
     PAGES = {025009, 34 pp.},
      ISSN = {0266-5611,1361-6420},
   MRCLASS = {65F22},
  MRNUMBER = {4374012},
       DOI = {10.1088/1361-6420/ac4428},
       URL = {https://doi.org/10.1088/1361-6420/ac4428},
}

@article {JinZhouZou:2020siopt,
    AUTHOR = {Jin, Bangti and Zhou, Zehui and Zou, Jun},
     TITLE = {On the convergence of stochastic gradient descent for
              nonlinear ill-posed problems},
   JOURNAL = {SIAM J. Optim.},
  FJOURNAL = {SIAM Journal on Optimization},
    VOLUME = {30},
      YEAR = {2020},
    NUMBER = {2},
     PAGES = {1421--1450},
      ISSN = {1052-6234,1095-7189},
   MRCLASS = {65J20 (47J06 65J22 90C90)},
  MRNUMBER = {4099813},
MRREVIEWER = {Antonio\ C. G. Leit\~ao},
       DOI = {10.1137/19M1271798},
       URL = {https://doi.org/10.1137/19M1271798},
}

@article{GaoBlumensath:2018,
    author = {Gao, Yushan and  Blumensath, Thomas},
    title = {A joint row and column action method for cone-beam computed tomography},
    journal = {IEEE Trans. Comput. Imag.},
    year = {2018}, 
    volume = {4},
    number = {4},
    pages = {599--608},
    doi = {10.1109/TCI.2018.2857446},
}
\end{document}